\documentclass[reqno]{amsart}
%Cagliari, 11.VI.2018, Bratislava 13.XI.2019
\usepackage{amsmath,amsthm,amsfonts, amssymb,amscd}
\usepackage[all]{xy}

\oddsidemargin    =0cm \evensidemargin   =0cm \textwidth =16cm
\numberwithin{equation}{section}

\newtheorem{theorem}{Theorem}[section]
\newtheorem{lemma}[theorem]{Lemma}%[section]
%[section]

\theoremstyle{definition}
\newtheorem{definition}[theorem]{Definition}%[section]

%[section]
\newtheorem{remark}[theorem]{Remark}%[section]

\newcommand{\lex}{\,\overrightarrow{\times}\,}

\newcommand{\Rad}{\mbox{\rm Rad}}

\setcounter{section}{4}
%\includeonly{n-dim k perf Part I}

\begin{document}
\title[$n$-dimensional Observables on $k$-Perfect MV-Algebras and $k$-Perfect Effect Algebras. II]{$n$-dimensional Observables on $k$-Perfect MV-Algebras and $k$-Perfect Effect Algebras. II. One-to-one Correspondence}
\author[A. Dvure\v{c}enskij, D. Lachman]{Anatolij Dvure\v{c}enskij$^{1,2}$, Dominik Lachman$^2$}
\maketitle

\begin{center}  \footnote{Keywords: $n$-dimensional observable; $n$-dimensional spectral resolution; characteristic point; unital po-group; interpolation; $k$-perfect MV-algebra; lexicographic MV-algebra; $k$-perfect effect algebra; joint observable; sum of observables

 AMS classification: 06D35, 06F20, 81P10

The first author acknowledges the support by
the Slovak Research and Development Agency under contract APVV-16-0073 and the grant VEGA No. 2/0142/20 SAV, and the second author acknowledges the support by the Austrian Science Fund (FWF): project I 4579-N and the Czech Science Foundation (GA\v CR): project 20-09869L.}
Mathematical Institute,  Slovak Academy of Sciences\\
\v Stef\'anikova 49, SK-814 73 Bratislava, Slovakia\\
$^2$ Depart. Algebra  Geom.,  Palack\'{y} Univer.\\
17. listopadu 12, CZ-771 46 Olomouc, Czech Republic\\

E-mail: {\tt
dvurecen@mat.savba.sk,\quad dominiklachman@seznam.cz}
\date{}%
\end{center}
%\maketitle

\begin{abstract}
The paper is a continuation of the research on a one-to-one correspondence between $n$-dimensional spectral resolutions and $n$-dimensional observables on lexicographic types of quantum structures  which started in \cite{DvLa4}.
In Part I, we presented the main properties of $n$-dimensional spectral resolutions and observables, and we deeply studied characteristic points which are crucial for our study.
In present Part II, there is a main body of our research. We investigate a one-to-one correspondence between $n$-dimensional observables and $n$-dimensional spectral resolutions with values in a kind of a lexicographic form of quantum structures like perfect MV-algebras or perfect effect algebras. The multidimensional version of this problem is more complicated than a one-dimensional one because if our algebraic structure is $k$-perfect for $k>1$, then even for the two-dimensional case of spectral resolutions we have more characteristic points. The obtained results are applied to existence of an $n$-dimensional meet joint observable of $n$ one-dimensional observables on a perfect MV-algebra and a sum of $n$-dimensional observables.
\end{abstract}

\section*{Introduction}
In this paper we continue the research from \cite{DvLa4}, where we have introduced $n$-dimensional observables and $n$-dimensional spectral resolutions defined on $k$-perfect effect algebras and $k$-perfect MV-algebras. In the first part, we have presented basic properties of observables and of spectral resolutions and we have concentrated to characteristic points of $n$-dimensional spectral resolutions which are important for the study of spectral resolutions. We underline that characteristic points are appearing only when the algebraic structure is of a lexicographic form. We note that an $n$-dimensional spectral resolution is a mapping $F:\mathbb R^n \to \Gamma_{ea}(H \lex G,(u,0))$, where $(H,u)$ is a unital po-group with interpolation and $G$ is a directed Dedekind $\sigma$-complete po-group with interpolation.

In the second part, we present the main results of our research which deal with establishing a one-to-one correspondence between $n$-dimensional spectral resolutions and $n$-dimensional observables showing that every $n$-dimensional spectral resolution can be uniquely extended to a unique $n$-dimensional observable. In addition, we apply the results to show existence of a kind of a joint $n$-dimensional observable of $n$ one-dimensional observables and to show how we can define a sum of $n$-dimensional observables.

Sections, theorems, propositions, lemmas, examples, and equations are numbered in continuation of \cite{DvLa4}, where there are basic notions used also in this part.

\section{General Spectral Resolutions and Two-Dimensional Ones }%5

In the section, we present a strengthened definition of an $n$-dimensional spectral resolution and of an $n$-dimensional pseudo spectral resolution valid for both MV-algebras as well as for effect algebras. This new definition will be sufficient in the next sections in order to extend an $n$-dimensional spectral resolution on a perfect MV-algebra to an $n$-dimensional observable $x$ such that $F=F_x$. First, the extension will be done for $n=2$ and then for general $n\ge 1$ for perfect MV-algebras and perfect effect algebras, and later also for $k$-perfect ones.

It is important to note that if $M$ is a $\sigma$-complete MV-algebra or a Dedekind monotone $\sigma$-complete effect algebra with (RDP), to show a one-to-one relationship between $n$-dimensional spectral resolutions and $n$-dimensional observables, the definition of a spectral resolution using (3.3)--(3.7) was sufficient, see \cite{DvKu,DvLa2,DvLa3}. However, if $M$ is perfect or $k$-perfect, that was not sufficient, see \cite{DDL,DvLa, DvLa1}. Therefore, the definition of an $n$-dimensional spectral resolution for $k$-perfect MV-algebras has to be strengthened and in this paper we need the following definition. The same is true for effect algebras. We note that every $F=F_x$, where $x$ is an $n$-dimensional observable does satisfy this new definition.

\begin{definition}\label{de:n-dim}
A mapping $F:\mathbb R^n \to M$ is said to be an {\it $n$-dimensional spectral resolution} if the following conditions hold
\begin{itemize}
\item[{\rm (i)}] the  volume condition,
\item[{\rm (ii)}] $$\bigvee_{(s_1,\ldots,s_n)\ll (t_1,\ldots,t_n)}F(s_1,\ldots,s_n)=F(t_1,\ldots,t_n),$$
\item[{\rm (iii)}] $\bigwedge_{t_i} F(s_1,\ldots,s_{i-1},t_i,s_{i+1},\ldots,s_n)=0$ for each $i=1,\ldots,n$,
\item[{\rm (iv)}] $\bigvee_{(s_1,\ldots,s_n)}F(s_1,\ldots,s_n)=1$,
\item[{\rm (v)}] If $(t^B_1,\ldots,t^B_n)$ is a characteristic point of $F$ corresponding to a block $B$, then the element
    \begin{equation}\label{eq:aB}
    a_B=\bigwedge\{F(s_1,\ldots,s_n)\colon (s_1,\ldots,s_n)\gg (t^B_1,\ldots,t^B_n)\}
    \end{equation}
    exists in $M$ (and it belongs to $B$).
\end{itemize}
If a mapping $F:\mathbb R^n\to M$ satisfies {\rm (i)--(iv)}, where instead of {\rm (iv)} we have
\begin{itemize}
\item[{\rm (iv)'}] $\bigvee_{(s_1,\ldots,s_n)} F(s_1,\ldots,s_n)=u_0$ and $u_0$ is not necessarily $1$,
    $F$ is said to be an {\it $n$-dimensional pseudo spectral resolution}.
\end{itemize}
\end{definition}

It is necessary to notify that according to Propositions 3.3--3.4, property (v) of Definition \ref{de:n-dim} is a necessary condition for existence of an $n$-dimensional observable $x$ such that $F=F_x$.

We note that for an $n$-dimensional pseudo spectral resolution, it can happen that $F$ has no characteristic point. Indeed, this can happen if $M$ is a perfect MV-algebra and $u_0\in \Rad(M)$. For example, if $F$ is an $n+1$-dimensional spectral resolution on a $\Gamma(\mathbb Z \lex G,(1,0))$, $n>1$. Given fixed $t \in \mathbb R$, the mapping $F_t:\mathbb R^n \to M$ defined by $F_t(t_1,\ldots,t_n):=F(t_1,\ldots,t_n,t)$, $t_1,\ldots,t_n \in \mathbb R$, is an $n$-dimensional pseudo spectral resolution, for more info see Lemma \ref{le:obser}. If $u_0=F(\infty,\ldots,\infty,t)\in \Rad(M)$, then $F_t$ has no characteristic point. This can happen always $t\le t^0_{n+1}$, where $(t^0_1,\ldots,t^0_{n+1})$ is a unique characteristic point of $F$.

It is important to notify that the characteristic points of $n$-dimensional pseudo spectral resolutions are evaluated in the same way as ones for $F$ with (i)--(iv) if they exist.

Moreover, if $u_0\in \Rad(M)$, $M=\Gamma(\mathbb Z\times \mathbb Z,(1,0))$, then the $n$-dimensional pseudo spectral resolution $F$ is in fact an $n$-dimensional spectral resolution on the interval $\sigma$-complete MV-algebra (monotone $\sigma$-complete effect algebra) $[0,u_0]\subset \Rad(M)$. If $u_0\in \Rad(M)'$, then $F$ is an $n$-dimensional spectral resolution on the interval algebra $[0,u_0]=\Gamma(\mathbb Z \lex \mathbb Z,(1,-g_0))$, where $(1,-g_0)=u_0$, $g_0\in G^-$, satisfying (i)--(v) and it is ``almost close" to a perfect MV-algebra, with $\Rad([0,u_0])=\Rad(M)=\{(0,g)\colon g\in G^+\}$ which is a Dedekind $\sigma$-complete poset. The ``almost close" means that $F$ has a (unique) characteristic point on $[0,u_0]$. Then our task is to find an $n$-dimensional observable $x$ on $[0,u_0]$ such that $x((-\infty,t_1)\times \cdots \times (-\infty,t_n))=F(t_1,\ldots,t_n)$, $t_1,\ldots, t_n \in \mathbb R$.

A one-to-one correspondence between $n$-dimensional spectral resolutions and $n$-dimensional observables for $\sigma$-complete MV-algebras and for Dedekind $\sigma$-complete effect algebras with the Riesz Decomposition property was established in \cite[Thm 5.1, Thm 5.2]{DvLa3}. Therefore, in what follows, we concentrate to $n$-dimensional spectral resolutions on lexicographic MV-algebras $\Gamma(H\lex G,(u,0))$ or on lexicographic effect algebras $\Gamma_{ea}(H\lex G,(u,0))$, where in the first case $(H,u)$ is a unital linearly ordered group and $G$ is a Dedekind $\sigma$-complete $\ell$-group, and in the second case $(H,u)$ is a unital po-group with interpolation and $G$ is a directed Dedekind monotone $\sigma$-complete po-group with interpolation.

If $M=\Gamma(G,u)$, where $(G,u)$ is a Dedekind $\sigma$-complete $\ell$-group, then (iv) and (v) are superfluous and $F$ can be extended to an $n$-dimensional observable, see \cite{DvLa2} for $n=2$ and \cite{DvLa3} for any $n\ge 1$. The same holds for a perfect effect algebra $\Gamma_{ea}(\mathbb Z\lex G,(1,0))$, where $G$ is a directed Dedekind monotone $\sigma$-complete po-group with interpolation, and for $\Gamma_{ea}(G,u)$, where $(G,u)$ is a monotone Dedekind $\sigma$-complete unital po-group with interpolation, (iv) and (v) are also superfluous.

In what follows, we will use the lexicographic product $\mathbb Z \lex G$, where $G$ is an Abelian directed po-group with interpolation or $G$ is an Abelian $\ell$-group. Then due to \cite[Cor 2.12]{Goo}, $\mathbb Z \lex G$ is with interpolation. Consequently, the effect algebra $\Gamma_{ea}(\mathbb Z\lex G,(n,-g_0))$, where $g_0\in G^+ $, is an effect algebra with (RDP), and $\Gamma_{ea}(\mathbb Z\lex G,(n,-g_0))$ is an MV-effect algebra if $G$ is an $\ell$-group.

We establish an important corollary of the definition of spectral resolutions holding for lexicographic effect algebras.

\begin{lemma}\label{le:infty}
Let $F$ be an $n$-dimensional spectral resolution with the finiteness property on an effect algebra $E=\Gamma_{ea}(H\lex G,(u,0))$, where $G$ is a directed Dedekind monotone $\sigma$-complete po-group with interpolation and $(H,u)$ is a unital po-group with interpolation.
If $i_1<\cdots < i_j$ is any non-empty subset of $\{1,\ldots,n\}$, then the element $\bigvee_{s_{i_1},\ldots,s_{i_j}}F(s_1,\ldots,s_n)$ exists in $M$ and we denote
$$
F(\hat s_1,\ldots,\hat s_n):=\bigvee_{s_{i_1},\ldots,s_{i_j}}F(s_1,\ldots,s_n),
$$
where $\hat s_i = +\infty$ if $i=i_k$ for some $k=1,\ldots,j$ and $\hat s_i = s_i$ otherwise.
\end{lemma}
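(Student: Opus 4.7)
My plan is to proceed by induction on $j$, the number of indices sent to $+\infty$, reducing the joint supremum to an iterated one and locating the tail of the monotone net inside a $\sigma$-complete portion of $E$. The fact that $F$ is coordinatewise non-decreasing (a consequence of the volume condition) together with the directedness of $\mathbb{R}^j$ will let me identify the joint and iterated suprema whenever both are meaningful.

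For the base case $j=1$, I fix the remaining coordinates and consider the family
\[
\{F(s_1,\ldots,s_{i-1},t,s_{i+1},\ldots,s_n):t\in\mathbb{R}\},
\]
which is non-decreasing in $t$ and bounded above by $1=(u,0)$. Writing each value in the lexicographic form $(h(t),g(t))\in H\lex G$, the $H$-component $h(t)$ is non-decreasing in the bounded interval $[0,u]\subseteq H^+$. The finiteness property provides only finitely many characteristic points, so $h(t)$ must stabilise at some $h_0\le u$ once $t$ is sufficiently large. From that threshold onwards, the tail of the net lies in the single ``layer'' $\{h_0\}\times G$ intersected with $E$, where suprema reduce to suprema of the $g$-components in $G$. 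Monotonicity allows the supremum over $t\in\mathbb{R}$ to be replaced by one over any cofinal increasing sequence, and Dedekind monotone $\sigma$-completeness of $G$ then supplies the supremum inside $E$.

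For the inductive step, suppose the assertion holds for every subset of at most $j$ indices, and consider a subset $\{i_1,\ldots,i_{j+1}\}$. By the inductive hypothesis, the element $F(\hat s_1,\ldots,\hat s_n)$ with the $j$ coordinates $s_{i_1},\ldots,s_{i_j}$ replaced by $+\infty$ is well defined in $E$, and it inherits monotonicity in the remaining coordinate $s_{i_{j+1}}$ from $F$. The $j=1$ argument applied to this monotone bounded net yields the desired supremum, and equality of the iterated and the joint supremum over $\mathbb{R}^{j+1}$ follows from directedness together with coordinatewise monotonicity.

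The main obstacle I anticipate is the base case: in a lexicographic effect algebra $\Gamma_{ea}(H\lex G,(u,0))$, a bounded monotone net need not admit a supremum at all, because transitions between different $H$-layers can be wild. The finiteness property must therefore be used in an essential way to force stabilisation of $h(t)$; without it one cannot retreat to a single $\sigma$-complete layer. Once this stabilisation is established, the reduction to a countable cofinal subsequence and the appeal to Dedekind monotone $\sigma$-completeness of $G$ are routine.
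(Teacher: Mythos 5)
Your base case contains the essential gap. After you use the finiteness property to stabilise the $H$-component at some $h_0$, you assert that the tail of the net lies in the layer $\{h_0\}\times G$ and that ``Dedekind monotone $\sigma$-completeness of $G$ then supplies the supremum.'' But monotone $\sigma$-completeness only applies to a \emph{bounded} monotone sequence in $G$, and you have not shown the $G$-components are bounded above in $G$. The only upper bound you have for the net is the top element $(u,0)$ of $E$, and in the lexicographic order $(h_0,g)\le (u,0)$ holds for \emph{every} $g\in G$ whenever $h_0<u$ strictly; so boundedness of the net in $E$ gives no control whatsoever on the $G$-components inside the layer. (When $h_0=u$ the components are automatically $\le 0$ and the claim is trivial --- this is exactly Case (i) of the paper's proof.) Producing a genuine upper bound inside the layer $E_{h_0}$ is the entire content of the lemma, and it is not routine.

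The paper's proof spends all of its Case (ii) on precisely this point: it runs a double induction (on the number $j$ of coordinates sent to $+\infty$ and on the number $i$ of fixed coordinates lying below the characteristic point $\mathbf t_u$) and uses the volume condition to write
\begin{align*}
F(s_1,\ldots,s_j,t_{j+1},\ldots,t_n)\ \le\ & F(s_1,\ldots,s_{j-1},t^u_{j}+\epsilon,t_{j+1},\ldots,t_n)\\
&+\bigl[F(s_1,\ldots,s_j,t^u_{j+1}+\epsilon,\ldots,t_n)-F(s_1,\ldots,t^u_{j}+\epsilon,t^u_{j+1}+\epsilon,\ldots,t_n)\bigr],
\end{align*}
where raising one fixed coordinate above its characteristic value pushes the bracketed difference into a \emph{higher} layer $E_{h'}$; there the difference of two elements of the same layer lands in $E_0=\{0\}\times G^+$ and is dominated by a fixed element, which is what finally yields an upper bound of the form $a_1+a_2-a_B$ inside $E_h$. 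Only after this does monotone $\sigma$-completeness apply. Your reduction to a cofinal countable subsequence and your identification of joint and iterated suprema are fine, and your use of the finiteness property to stabilise the $H$-component matches the paper; but without the volume-condition argument the appeal to $\sigma$-completeness in the base case is vacuous, and the same gap propagates into your inductive step, which invokes the $j=1$ argument for the partially-supremumed function.
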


\begin{proof}
Due to \cite[Cor 2.12]{Goo}, the po-group $H\lex G$ has the interpolation property.
Let $\mathbf t_u =(t^u_1,\ldots,t^u_n)$ be a unique characteristic point of $T_u=\{(t_1,\ldots,t_n)\colon F(t_1,\ldots,t_n)\in E_u\}$. We will establish that
$$
\bigvee_{s_1,\ldots,s_j}F(s_1,\ldots,s_j,t_{j+1},\ldots,t_n),
$$
exists in $E$ for all fixed $t_{j+1},\ldots,t_n\in\mathbb{R}$.

There are two cases. Case (i): There are $s^0_{1},\ldots,s^0_j\in \mathbb R$ such that $F(s_1,\ldots,s_j,t_{j+1},\ldots,t_n)\in E_u:=\{(u,g)\colon g \in G^-\}$ for all $s_{1},\ldots s_j\in \mathbb R$ with $s_{1} >s^0_{1},\ldots,s_j>s^0_j$; then the statement trivially holds.

Case (ii): Case (i) does not hold. Since $F$ has the finiteness property, we can assume that there are $s^0_1,\ldots, s^0_j\in \mathbb R$ and $h \in [0,u)_H$
such that $(s_1,\ldots,s_j,t_{j+1},\ldots,t_n)\in T_h$ for all $s_1>s^0_1,\ldots,s_j>s^0_j$. Without loss of generality, we can assume that $s^0_1\ge t^u_1,\ldots,s^0_j\ge t^u_j$.

We denote by $i$ the number of $t_k$'s below $t^u_k$, $k=j+1,\ldots,n$. Then $i\ge 1$.

We prove the statement using induction on $i$ and $j$. Assume that $i=1$ and let $k=j+1$ be such that $t_k=t_{j+1}\le t^u_{j+1}$ and $F(s_1,\ldots,s_j,t_{j+1},\ldots,t_n)\in E_h$.

For every $\epsilon>0$, we have the following volume condition
\begin{align*}
F(s_1,\ldots,s_{j-1},s_j,t_{j+1},\ldots,t_n)-&F(s_1,\ldots,s_{j-1}, t^u_{j}+\epsilon,t_{j+1},\ldots,t_n)\\
\leq F(s_1,\ldots,s_{j-1},s_j,t^u_{j+1}+\epsilon,\ldots,t_n)- &F(s_1,\ldots,s_{j-1},t^u_{j}+\epsilon,t^u_{j+1}+\epsilon,\ldots,t_n).
\end{align*}
So
\begin{equation}\label{eq:b}
F(s_1,\ldots,s_{j-1},s_j,t_{j+1},\ldots,t_n)
\end{equation}
has an upper bound
\begin{align*}
F(s_1,\ldots,s_{j-1},t^u_{j}+\epsilon,t_{j+1},\ldots,t_n)&+ [F(s_1,\ldots,s_{j-1},s_j,t^u_{j+1}+\epsilon,\ldots,t_n)\\
&-F(s_1,\ldots,s_{j-1},t^u_{j}+\epsilon,t^u_{j+1}+\epsilon,\ldots,t_n)].
\end{align*}
Since we have finitely many blocks, we can
choose $h,h'\in [0,u]_H$, $\epsilon >0$, and sufficiently large variables $s_1,\ldots,s_j\in \mathbb R$ such that $F(s_1,\ldots,s_{j-1},s_j,t_{j+1},\ldots,t_n)\in E_{h}$, $F(s_1,\ldots,s_{j-1},t^u_{j}+\epsilon,t_{j+1},\ldots,t_n)\in E_{h}$, and $F(s_1,\ldots,s_{j-1},s_j,t^u_{j+1}+\epsilon,\ldots,t_n)$ and $F(s_1,\ldots,s_{j-1},t^u_{j}+\epsilon,t^u_{j+1}+\epsilon,\ldots,t_n)$ belong to $E_{h'}$.

Now, we follow the induction with respect to $i$ with fixed $j=1$. Then
\begin{equation}\label{eq:s1}
F(s_1,t_2,\ldots,t_n)\le F(t^u_1+\epsilon,t_2,\ldots,t_n) +
[F(s_1,t^u_2+\epsilon,t_3,\ldots,t_n)-F(t^u_1+\epsilon, t^u_2+\epsilon,t_3,\ldots,t_n)].
\end{equation}
Since $i=1$, then $h'=u$, $E_u\ni F(s_1,t^u_2+\epsilon,t_3,\ldots,t_n)\le 1$ and $E_0\ni F(s_1,t^u_2+\epsilon,t_3,\ldots,t_n)-F(t^u_1+\epsilon, t^u_2+\epsilon,t_3,\ldots,t_n) \le 1 - F(t^u_1+\epsilon, t^u_2+\epsilon,t_3,\ldots,t_n)\in E_0$ which shows that $\{F(s_1,t_2,\ldots,t_n)\colon s_1\in \mathbb R\}$ has an upper bound in $E_h$.

Now, assume that $i>1$. Due to the induction hypothesis, we assume that the lemma holds for $j=1$ and every $1\le i'<i$. From (\ref{eq:s1}) we conclude by induction that $\bigvee_{s_1}F(s_1,t^u_2+\epsilon,t_3,\ldots,t_n)$ exists in $E$ and it belongs to $E_{h'}$. Therefore, $\bigvee_{s_1}[F(s_1,t^u_2+\epsilon,t_3,\ldots,t_n)-F(t^u_1+\epsilon, t^u_2+\epsilon,t_3,\ldots,t_n)] \in E_0$ and
$\bigvee_{s_1} \{F(s_1,t_2,\ldots,t_n)\colon s_1\in \mathbb R\}$ has the supremum in $E_h$.

Consequently, $\bigvee_{s_1}F(s_1,t_2,\ldots,t_n)$ exists in $E_h$ for all $t_2,\ldots,t_n\in \mathbb R$.

Now, we assume that the lemma holds for each $j'$ with $1\le j'<j$. % and each $1\le i'<i$.
We use an upper bound for (\ref{eq:b}) that is just after (\ref{eq:b}). Due to the induction hypothesis, $$a_1:=\bigvee\{F(s_1,\ldots,s_{j-1},t^u_{j}+ \epsilon,t_{j+1},\ldots,t_n)\colon s_1,\ldots,s_{j-1}\in \mathbb R\}
$$
exists in $E$ and it belongs to $E_h$. Since $F(s_1,\ldots,s_{j-1},t^u_{j}+\epsilon,t^u_{j+1}+\epsilon,\ldots,t_n)$ belongs to $E_{h'}$ for sufficiently large $s_1,\ldots,s_{j-1}$, it belongs to some block $B\subseteq E_{h'}$. Using definition of spectral resolutions, the element $a_B=\bigwedge\{F(u_1,\ldots,u_n)\colon (u_1,\ldots,u_n)\in B\}$ exists in $E$ and it belongs to $E_{h'}$. Finally, as in the previous case for $j=1$, we can show $\{F(s_1,\ldots,s_{j-1},s_j,t^u_{j+1}+\epsilon,\ldots,t_n)\colon s_1,\ldots,s_j\in \mathbb R\}$ has an upper bound, say $a_2$, in $E_{h'}$: First we show it for $i=1$ and then for $i>1$. Altogether,
$\{F(s_1,\ldots,s_j,t_{j+1},\ldots,t_n)\colon s_1,\ldots,s_j\in \mathbb R\}$ has an upper bound in $E_h$, namely $a_1+a_2-a_B$, which finishes the proof.
\end{proof}

If $F$ is two-dimensional, then, in particular, we have $F(s,t_0^+):=\bigwedge_{t>t_0}F(s,t)$ exists in $M$ and $F(s,t_0^+)\in \Rad(M)$ if $s<s_0$ and $F(s,t_0^+)\in \Rad(M)'$ if $s> s_0$. Dually, $F(s_0^+,t)\in \Rad(M)$ if $t\le t_0$ and, $F(s_0^+,t)\in \Rad(M)'$ if $t\ge t_0$.

\begin{lemma}\label{lem:help1}
Let $G$ be a directed monotone $\sigma$-complete unital po-group with interpolation, fix $g_0\in G^+$, and let $(a_i)_i,(b_i)_i,(c_i)_i,(d_i)_i$ be sequences from  $E=\Gamma_{ea}(\mathbb Z \lex G,(n,-g_0))$, $n\ge 1$.

{\rm (1)}  If $(a_i)_i,(b_i)_i,(c_i)_i,(d_i)_i$ are non-decreasing with suprema $a$, $b$, $c$, $d$, such that $(a_i+d_i-b_i-c_i)_i$ is non-decreasing and from $E$. Then $\bigvee_i(a_i+d_i-b_i-c_i)$ and $a+b-b-c$ exist in $E$ and $\bigvee_i(a_i+d_i-b_i-c_i)=a+d-b-c$.

{\rm (2)} If $(a_i)_i,(b_i)_i,(c_i)_i,(d_i)_i$ are non-increasing with infima $a$, $b$, $c$, $d$, such that $(a_i+d_i-b_i-c_i)_i$ is non-increasing and from $E$. Then $\bigwedge_i(a_i+d_i-b_i-c_i)$ and $a+b-b-c$ exist in $E$ and $\bigwedge_i(a_i+d_i-b_i-c_i)=a+d-b-c$.

{\rm (3) } If $(a_i)_i,(b_i)_i,(c_i)_i,(d_i)_i$ are non-decreasing with suprema $a$, $b$, $c$, $d$, such that $(a_i+d_i-b_i-c_i)_i$ is non-increasing and from $E$. Then $\bigwedge_i(a_i+d_i-b_i-c_i)$ and $a+b-b-c$ exist in $E$ and $\bigwedge_i(a_i+d_i-b_i-c_i)=a+d-b-c$.

{\rm (4) } If $(a_i)_i,(b_i)_i,(c_i)_i,(d_i)_i$ are non-increasing with infima $a$, $b$, $c$, $d$, such that $(a_i+d_i-b_i-c_i)_i$ is non-decreasing and from $E$. Then $\bigvee_i(a_i+d_i-b_i-c_i)$ and $a+b-b-c$ exist in $E$ and $\bigvee_i(a_i+d_i-b_i-c_i)=a+d-b-c$.

In particular, these hold when $c_i,d_i$ equal constantly zero.

Moreover, the statements hold also for $E=\Gamma_{ea}(G,u)$, if $u$ is a strong unit for $G$.
\end{lemma}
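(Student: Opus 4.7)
The plan is to reduce everything to general facts about monotone suprema and infima in a directed po-group with interpolation. First, I would work inside the ambient po-group $\mathbb Z \lex G$ (or just $G$ in the moreover clause), which has the Riesz interpolation property by \cite[Cor 2.12]{Goo} and is monotone $\sigma$-complete for bounded sequences: any bounded non-decreasing sequence in $\mathbb Z \lex G$ has first coordinate eventually constant and then second coordinate non-decreasing and bounded in $G$, so the supremum exists, and (dually) bounded non-increasing sequences have infima. Moreover, the sup/inf of such a sequence inside the effect algebra $E$ coincides with the one in the enveloping group, and translation by a fixed element is an order automorphism, so $\bigvee_i(a_i+t)=(\bigvee_i a_i)+t$ and the dual hold.

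Set $y_i:=a_i+d_i$ and $z_i:=b_i+c_i$, with candidate limits $y:=a+d$ and $z:=b+c$. Each is monotone in the same direction as its summands and bounded in $E$, so continuity of addition with respect to monotone limits (obtained by two applications of translation invariance, one per summand) yields $y_i\uparrow y$, $z_i\uparrow z$ in parts (1),(3) and $y_i\downarrow y$, $z_i\downarrow z$ in parts (2),(4). In every part the quantity of interest is $x_i=y_i-z_i$, and it is assumed monotone in the prescribed direction; our target limit is $u:=y-z=a+d-b-c$.

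Now I would prove (1) as a template. To see $x_i\le u$: monotonicity $x_i\le x_j$ for $j\ge i$ rewrites as $y_j-y_i\ge z_j-z_i$; taking the sup over $j$ and using translation invariance gives $\bigvee_j(y_j-y_i)=y-y_i$ and $\bigvee_j(z_j-z_i)=z-z_i$, hence $y-y_i\ge z-z_i$, equivalently $x_i\le u$. To see $u$ is the least upper bound: if $v\ge x_i$ for all $i$, then $v+z_i\ge y_i$; taking the sup over $i$ yields $v+z\ge y$, i.e.\ $v\ge u$. Thus $\bigvee_i x_i=u$. Parts (2)--(4) follow by the same two-step routine, flipping directions appropriately (e.g.\ for (3) one uses $x_j\le x_i$ to get $y_j-y_i\le z_j-z_i$, then passes to the sup over $j$ as before to obtain $y-y_i\le z-z_i$, i.e.\ $x_i\ge u$; for the ``greatest lower bound'' half of (3) one uses $v\le x_i\Rightarrow v+z_i\le y_i$ and takes $\bigvee_i$). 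The special case $c_i=d_i=0$ is immediate, and for $E=\Gamma_{ea}(G,u)$ with $u$ a strong unit the identical manipulations go through, since $G$ is itself a monotone $\sigma$-complete po-group with interpolation.

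The main obstacle, as I see it, is justifying the commutations of sups/infs with the group operations: namely, that $\bigvee_j(y_j-y_i)=y-y_i$ and $\bigvee_i(v+z_i)=v+z$ actually hold in $\mathbb Z \lex G$, and that the bounded monotone sequences in question really do admit suprema/infima in the ambient group. Both reduce to translation invariance plus the explicit lexicographic structure (eventual constancy in the first coordinate combined with monotone $\sigma$-completeness of $G$), so once these are dispatched the four parts collapse to the short computation above.
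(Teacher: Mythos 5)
Your proposal is correct and follows essentially the same route as the paper's proof: reduce to the ambient group $\mathbb Z\lex G$ where bounded monotone sequences have eventually constant first coordinate, use translation invariance of suprema/infima, and exploit the rearranged monotonicity inequality $a_i+d_i+b_j+c_j\le b_i+c_i+a_j+d_j$ for $j\ge i$ before passing to the limit in $j$. The only cosmetic difference is that you verify directly that $a+d-b-c$ is the least upper bound, whereas the paper first establishes existence of $S=\bigvee_i(a_i+d_i-b_i-c_i)$ from boundedness and then proves the two inequalities $S\le a+d-b-c$ and $a+d-b-c\le S$.
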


\begin{proof}
First we note that a sequence $(e_i)_i$ of elements of $E= \Gamma_{ea}(\mathbb Z \lex G,(n,-g_0))$ has a supremum (infimum) in $E$ iff there are $k=0,\ldots,n$, $i_0\ge 1$, and $a,b\in G$ such that $(k,a)\le e_i=(k,g_i)\le (k,b)$ for each $i\ge i_0$. Moreover, the supremum/infimum of $(e_i)_i$ taken in $E$ and in $\mathbb Z\lex G$ exists simultaneously and then they are the same.

Therefore, we can without loss of generality assume the four sequences are constant in the first component, and hence so is the fifth one. More concretely, there is some $k=1,\ldots,n-1$ such that $a_i+d_i-b_i-c_i$ belong to $\{k\}\times G$ (respectively $\{0\}\times G^+$ or $\{n\}\times \{-g\colon g \ge g_0, g \in G^+\}$ in the case $k=0$ or $k=n$) for each $i\ge 1$. This sequence is monotone and bounded in $\{k\}\times G$ (respectively $\{0\}\times G^+$ or $\{n\}\times G^-$): In the case of (1) it has an upper bound $a+d-b_1-c_1$ (respective $(n,-g_0)$ in the case $k=n$) and similarly one can find an upper bound in the case (3) and a lower bound in the cases (2) and (4). By the assumptions on $G$, we see, that $S=\bigvee_i(a_i+d_i-b_i-c_i)$ (respectively $S=\bigwedge_i(a_i+d_i-b_i-c_i)$) exists in $E$.

It remains to prove, for each of the four cases, the desired equality (the existence of $a+d-b-c$ in $E$ is then a consequence, as it exists in the po-group $\mathbb{Z}\lex G$).

(1) Denote $S=\bigvee_i(a_i+d_i-b_i-c_i)$, then for each $i$, we have $a_i+d_i\leq S+ b_i+c_i$, hence $a+d\leq S+ b+c$ which gives us one inequality. To prove the second one, we have to verify that for each $i$ we have the inequality
$$
a_i+d_i- b_i-c_i\leq a+d-b-c.
$$
Equivalently, $a_i+d_i+b+c\leq b_i +c_i +a+d$. But due to monotonicity of $(a_i+b_i-c_i-d_i)_i$, for each $j\geq i$, we have $a_i+d_i+b_j+c_j\leq b_i +c_i +a_j+d_j$.

(2) The proof is dual to (1).

(3) The proof is similar to (1). Denote $S=\bigwedge_i(a_i+d_i-b_i-c_i)$. We have $a_i+d_i\geq S+ b_i+c_i$ for each $i$, which gives us $a+d-b-c\geq S$. And for each $i$, $a-b-c+d\leq a_i-b_i-c_i+d_i$, as this is equivalent to $a+d+b_i+c_i\leq b+c+a_i+d_i$, which follows from: $$\forall j>i, a_j+d_j+b_i+c_i\leq b_j+c_j+a_i+d_i.$$

The last case (4) is dual to (3).
\end{proof}

To lighten the notation, we will write $F(s^+,t)$ in place of $\bigwedge_{s'>s}F(s',t)$ and $F(s^-,t)$ in place of $\bigvee_{s<s'}F(s',t)$ and similarly for the second coordinate.

\begin{lemma}\label{lem:help2}
For each $t_1\in\mathbb{R}\cup\{-\infty\}$, $s_1\in\mathbb{R}\cup\{+\infty\}$, and for each two-dimensional spectral resolution $F(s,t)$, we have
\begin{equation}\label{eq:A}
\bigvee_{s<s_1}\bigwedge_{t>t_1}F(s,t)=\bigwedge_{t>t_1}\bigvee_{s<s_1}F(s,t),
\end{equation}
provided the infima exist. Consequently $F(s^-,t^+)$ is well defined.
\end{lemma}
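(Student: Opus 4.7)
The inequality $\bigvee_{s<s_1}\bigwedge_{t>t_1}F(s,t)\le\bigwedge_{t>t_1}\bigvee_{s<s_1}F(s,t)$ is immediate from $F(s,t)\le\bigvee_{s'<s_1}F(s',t)$, so the lemma amounts to the reverse inequality. My plan is to reduce to countable cofinal sequences $s_n\nearrow s_1$ and $t_m\searrow t_1$ (with $s_n<s_1$, $t_m>t_1$) and then to chain several applications of Lemma~\ref{lem:help1}, each fed by the volume condition.

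Set $a_n:=\bigwedge_m F(s_n,t_m)$, $b_m:=\bigvee_n F(s_n,t_m)$, $L:=\bigvee_n a_n$, $R:=\bigwedge_m b_m$; by separate monotonicity of $F$ these agree with the infima/suprema over $\{s<s_1\}$ and $\{t>t_1\}$. Existence of the $b_m$ follows from Lemma~\ref{le:infty}, while existence of the $a_n$, $L$, and $R$ is assumed. The volume condition applied to the rectangle $(s_n,s_{n'}]\times(t_{m'},t_m]$ for $n\le n'$, $m\le m'$ gives $F(s_{n'},t_m)-F(s_n,t_m)\ge F(s_{n'},t_{m'})-F(s_n,t_{m'})$; letting $m'\to\infty$ and using Lemma~\ref{lem:help1} to evaluate the infimum on the right produces $F(s_{n'},t_m)-F(s_n,t_m)\ge a_{n'}-a_n$, whence $F(s_n,t_m)-a_n$ is non-decreasing in $n$.

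With this preparation, three applications of Lemma~\ref{lem:help1} yield
\begin{align*}
\bigvee_{m'\ge m}\bigl[F(s_n,t_m)-F(s_n,t_{m'})\bigr]&=F(s_n,t_m)-a_n,\\
\bigvee_n\bigl[F(s_n,t_m)-F(s_n,t_{m'})\bigr]&=b_m-b_{m'}\quad(m\le m'),\\
\bigvee_n\bigl[F(s_n,t_m)-a_n\bigr]&=b_m-L.
\end{align*}
Substituting the first identity into the third and swapping the two independent suprema,
$$
b_m-L\;=\;\bigvee_{n,\,m'\ge m}\bigl[F(s_n,t_m)-F(s_n,t_{m'})\bigr]\;=\;\bigvee_{m'\ge m}\bigl[b_m-b_{m'}\bigr]\;=\;b_m-R,
$$
where the last equality is one more use of Lemma~\ref{lem:help1}; cancelling $b_m$ in the ambient po-group $H\lex G$ then gives $L=R$. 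I expect the main obstacle to be the bookkeeping for each invocation of Lemma~\ref{lem:help1}: one must verify the appropriate monotonicity of each of the four auxiliary sequences and of the combined difference, and check that the intermediate elements remain inside $M$ so that the effect-algebra subtractions are defined. All of these checks reduce to the volume condition, but they must be carried out at each limit exchange.
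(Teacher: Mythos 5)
Your argument is correct, and it reaches the conclusion by a genuinely different organization than the paper's. The paper first observes that, by the continuity property, $\bigvee_{s<s_1}F(s,t)=F(s_1,t)$, so the right-hand side of \eqref{eq:A} is simply $\bigwedge_{t>t_1}F(s_1,t)$; it then uses the volume condition to squeeze
\begin{equation*}
0\leq \bigwedge_{t>t_1}F(s_1,t)-\bigwedge_{t>t_1}F(s,t)\leq F(s_1,t')-F(s,t'),
\end{equation*}
notes that the upper bound decreases to $0$ as $s\nearrow s_1$, and with a single application of Lemma~\ref{lem:help1} converts $\bigwedge_{s<s_1}$ of the middle term into $\bigwedge_{t>t_1}F(s_1,t)-\bigvee_{s<s_1}\bigwedge_{t>t_1}F(s,t)=0$. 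You instead run an identity-chasing computation: four invocations of Lemma~\ref{lem:help1}, each justified by a volume-condition monotonicity check, plus a Fubini-type interchange of two independent suprema, yielding $b_m-L=b_m-R$ and then cancellation in the po-group. The ingredients are identical (the volume condition to make the relevant difference sequences monotone, and Lemma~\ref{lem:help1} to commute lattice operations with group subtraction), so neither route is more general; the paper's version is shorter because identifying the right-hand side with $\bigwedge_{t>t_1}F(s_1,t)$ collapses three of your four limit interchanges into one, while yours is more mechanical and makes every exchange explicit. One small point: you declare the existence of $L=\bigvee_n a_n$ to be ``assumed,'' whereas the stated equality implicitly asserts it; this is harmless here, since $(a_n)_n$ is non-decreasing and bounded above by $b_1$, hence eventually confined to a single level $\{k\}\times G$ where Dedekind $\sigma$-completeness supplies the supremum, but it deserves a sentence (the paper's own application of Lemma~\ref{lem:help1} at the last step presupposes the same fact, so you are not losing rigor relative to the original).
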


\begin{proof}
Let $s_1,t_1\in \mathbb R$. We are going to prove that the left hand side of (\ref{eq:A}) equals $\bigwedge_{t>t_1}F(s_1,t)$. We can deduce from the volume condition
$$
0\leq \bigwedge_{t>t_1} F(s_1,t)-\bigwedge_{t>t_1}F(s,t)\leq F(s_1,t')-F(s,t')
$$
for each $t'>t_1$ and each $s<s_1$. But $F(s_1,t')-F(s,t')\searrow 0$ as $s\nearrow s_1$, hence
$$
0=\bigwedge_{s<s_1}[\bigwedge_{t>t_1} F(s_1,t)-\bigwedge_{t>t_1}F(s,t)]=\bigwedge_{t>t_1} F(s_1,t)-\bigvee_{s<s_1}\bigwedge_{t>t_1}F(s,t),
$$
which is the desired equation. The case $s_1=+\infty$ proceed in almost the same way and the case $t_1=-\infty$ is trivial.
\end{proof}

The last two lemmas will be frequently used.

Now, we present the first basic result on a one-to-one correspondence between spectral resolutions and observables for $n=2$.

\begin{theorem}\label{th:TwoMV}
Let $M=\Gamma(\mathbb Z\lex G,(1,0))$ be a perfect MV-algebra, where $G$ is a Dedekind $\sigma$-complete $\ell$-group. If $F$ is a two-dimensional spectral resolution on $M$, then there is a unique two-dimensional observable $x$ on $M$ such that $x((-\infty,s)\times (-\infty,t))=F(s,t)$ for each $(s,t)\in \mathbb R^2$.
\end{theorem}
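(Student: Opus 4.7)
The plan is to decompose $F$ at the characteristic point corresponding to the coradical block, and then apply the already-solved Dedekind $\sigma$-complete case. Since $M$ is perfect, it has exactly two blocks: the radical $\Rad(M)=\{(0,g):g\in G^+\}$ and the coradical $B=\Rad(M)'=\{(1,-g):g\in G^+\}$. From the discussion preceding Lemma~\ref{lem:help1}, $F$ admits a unique characteristic point $(s_0,t_0)$ with $F(s,t)\in\Rad(M)$ whenever $s\le s_0$ or $t\le t_0$, and $F(s,t)\in B$ whenever $s>s_0$ and $t>t_0$. Property (v) of Definition~\ref{de:n-dim} provides
\[
a_B:=\bigwedge\{F(s,t):s>s_0,\ t>t_0\}=(1,-\alpha),\qquad\alpha\in G^+,
\]
in $M$.

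The key technical step is to form a radical-valued surrogate
\[
F'(s,t):=\begin{cases}F(s,t)&\text{if }s\le s_0\text{ or }t\le t_0,\\ F(s,t)-a_B&\text{if }s>s_0\text{ and }t>t_0,\end{cases}
\]
the subtraction taking place in the po-group $\mathbb{Z}\lex G$, and to verify that $F'$ is a two-dimensional spectral resolution on the Dedekind $\sigma$-complete interval MV-algebra $\Gamma(G,\alpha)$. Axioms (i), (iii), (iv) transfer directly (with $\bigvee F'=1-a_B=(0,\alpha)$), while for (ii) the continuity along the boundary lines $\{s=s_0\}$ and $\{t=t_0\}$ is obtained from Lemma~\ref{lem:help2}, and Lemma~\ref{lem:help1} is used to rearrange the differences appearing in the volume condition. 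The uniform bound $F'(s,t)\le(0,\alpha)$ on the lower-L region --- the non-routine part --- follows from the volume condition applied to rectangles straddling $(s_0,t_0)$ combined with the existence of $a_B$.

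Invoking the already-established correspondence for $\sigma$-complete MV-algebras (\cite{DvLa2} for $n=2$, or \cite[Thm~5.1]{DvLa3} for general $n$), $F'$ extends uniquely to a two-dimensional observable $y$ on $\Gamma(G,\alpha)\subseteq\Rad(M)$. Define
\[
x(E):=\begin{cases}y(E)&\text{if }(s_0,t_0)\notin E,\\ y(E)+a_B&\text{if }(s_0,t_0)\in E,\end{cases}
\]
for every Borel $E\subseteq\mathbb{R}^2$. Then $x(\mathbb{R}^2)=(0,\alpha)+(1,-\alpha)=1$, and $x$ is $\sigma$-additive since in any disjoint decomposition of $E$ exactly one summand carries the atom $a_B$ at $(s_0,t_0)$; also $x((-\infty,s)\times(-\infty,t))=F(s,t)$ by construction. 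Uniqueness follows symmetrically: any observable $\tilde x$ realizing $F$ must, by property (v) applied to $\tilde x$ itself, assign $a_B$ to $\{(s_0,t_0)\}$, and subtracting this atom then yields a radical-valued observable with spectral resolution $F'$, which coincides with $y$ by the $\sigma$-complete uniqueness. The main obstacle is the verification step: establishing the uniform bound for $F'$ on the lower-L region, and checking continuity across the boundary lines, both of which crucially rely on property (v) providing $a_B$ --- without it, the subtraction $F-a_B$ would not be well-defined in the upper-right quadrant and the whole decomposition strategy would collapse.
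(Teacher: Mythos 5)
Your reduction to the $\sigma$-complete case via a single subtraction of $a_B$ on the upper-right quadrant does not work: the surrogate $F'$ is not monotone, so it is not a spectral resolution on $\Gamma(G,\alpha)$ and the appeal to \cite{DvLa2} collapses. The problem is that the coradical ``mass'' of $F$ is not concentrated at the characteristic point $(s_0,t_0)$; it is spread along the two rays $\{(s,t_0)\colon s>s_0\}$ and $\{(s_0,t)\colon t>t_0\}$, in the sense that the jumps $F(s,t_0^+)-F(s,t_0)$ and $F(s_0^+,t)-F(s_0,t)$ are coradical elements that vary with $s$ and $t$. Concretely, take $G=\mathbb R$, $M=\Gamma(\mathbb Z\lex\mathbb R,(1,0))$, and $F(s,t)=F_1(s)\wedge F_1(t)$ with $F_1(s)=(0,\arctan s+\pi/2)$ for $s\le 0$ and $F_1(s)=(1,\arctan s-\pi/2)$ for $s>0$. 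Then $(s_0,t_0)=(0,0)$ and $a_B=(1,-\pi/2)$. For large $t$ one has $F'(0,t)=F(0,t)=(0,\pi/2)$, while for small $s'>0$ one has $F'(s',t)=F(s',t)-a_B=(0,\arctan s')$, which is strictly below $(0,\pi/2)$; so $F'(0,t)>F'(s',t)$ although $0<s'$. The inequality you would need, namely $F(s,t)+a_B\le F(s',t)$ for $s\le s_0<s'$, amounts to $g_1+g_2\le\alpha$ where $F(s,t)=(0,g_1)$ and $F(s',t)=(1,-g_2)$, and this fails as soon as $F$ is not identically $0$ on the lower-L region near the boundary.

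A second, related error is the uniqueness step: the atom that any extending observable must carry at $(s_0,t_0)$ is not $a_B=F(s_0^+,t_0^+)$ but
\begin{equation*}
x(\{(s_0,t_0)\})=F(s_0^+,t_0^+)-F(s_0^+,t_0)-F(s_0,t_0^+)+F(s_0,t_0),
\end{equation*}
obtained as the infimum of $x([s_0,s)\times[t_0,t))$ over $s\searrow s_0$, $t\searrow t_0$; in the example above this equals $(1,-\pi)\ne a_B$. The paper's proof is structured precisely to repair what your construction omits: besides the four quadrant resolutions $F_{i,j}$ (which, unlike your $F'$, genuinely land in $\Rad(M)$ because they subtract $F(s_0^+,t)$ and $F(s,t_0^+)$ rather than the constant $a_B$), one must introduce the two one-dimensional spectral resolutions $F_0(s)=F(s,t_0^+)-F(s,t_0)$ and $F_1(t)=F(s_0^+,t)-F(s_0,t)$ along the rays. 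These take values in lexicographic MV-algebras with top element in $\Rad(M)'$, so they cannot be handled by the $\sigma$-complete theory at all and require the one-dimensional perfect-MV-algebra result \cite[Thm 4.8]{DDL} — an ingredient entirely absent from your argument. Your idea of isolating the coradical contribution is the right instinct, but that contribution is a one-dimensional observable's worth of data on each ray plus a corrected atom, not a single element $a_B$.
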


\begin{proof}
Denote by $(s_0,t_0)\in\mathbb{R}^2$ the characteristic point of $F$. We divide the plane $\mathbb{R}^2$ into four blocks by cutting $\mathbb R^2$ in each coordinate of the characteristic point. Denote $B_{0,0}=\{(s,t)\colon (s,t)\leq (s_0,t_0)\}$, $B_{0,1}=\{(s,t)\colon s\leq s_0, t_0<t\}$, $B_{1,0}=\{(s,t)\colon s_0< s, t\leq t_0\}$ and $B_{1,1}=\{(s,t)\colon (s_0,t_0)\ll (s,t)\}$. For each block we define a spectral resolution which encodes what is the essential increase on the block. The case of $B_{0,0}$ is trivial: Define
$$
F_{0,0}(s,t):=F(s',t'), \mbox{where } s'=\min\{s_0,s\} \mbox{ and } t'=\min\{t_0,t\}.
$$
In the case of $B_{0,1}$, we define
$$
F_{0,1}(s,t)= \left\{\begin{array}{ll}
F(s',t)-F(s',t_0^+) & \mbox{if}\ t>t_0, \mbox{ where } s'=\min\{s_0,s\},\\
0 & \mbox{if} \ t\le t_0.
\end{array}
\right.
$$
Similarly, we define
$$
F_{1,0}(s,t)= \left\{\begin{array}{ll}
F(s,t')-F(s_0^+,t') & \mbox{if}\ s>s_0, \mbox{ where } t'=\min\{t_0,t\},\\
0 & \mbox{if} \ s\le s_0.
\end{array}
\right.
$$

The most important is of course $F_{1,1}$ which is defined as follows
$$
F_{1,1}(s,t)=\left\{\begin{array}{ll}
F(s,t)-\bigwedge_{s'>s_0}F(s',t)-\bigwedge_{t'>t_0}F(s,t') +\bigwedge_{s'>s_0,t'>t_0}F(s',t'), & \mbox{if}\ s>s_0,t>t_0,\\
0, & \mbox{otherwise}.
\end{array}
\right.
$$
%and $0$ otherwise.
Using  the above notation, $F_{1,1}$ can be rewritten
$$
F_{1,1}(s,t)=F(s,t)- F(s_0^+,t)-F(s,t_0^+)+F(s_0^+,t_0^+), \quad (s,t)\in B_{1,1}.
$$
From the volume condition we conclude that $F_{1,1}(s,t)\ge 0$ and $F_{1,1}(s,t) \in \Rad(M)$ for each $s,t \in \mathbb R$: Indeed, let $(s,t)\in B_{1,1}$ and take $s',t'$ such that $s_0<s'<s$ and $t_0<t'<t$. Then the volume condition yields $F(s',t')+F(s,t)\ge F(s,t')+F(s',t)$, so that $F(s_0^+,t_0^+)+F(s,t)\ge F(s,t_0^+) +F(s_0^+,t)$.

Just defined $F_{i,j}$'s are two-dimensional pseudo spectral resolutions. Pseudo means that the top element for $F_{i,j}$, $u_{i,j}:=\bigvee_{s,t}F_{i,j}(s,t)$, is defined in $M$ and $F_{i,j}$ is in fact a two-dimensional observable in the interval $[0,u_{i,j}]$. Indeed, the first three $F_{i,j}$ are evidently so,
because due to the volume condition and Lemma \ref{lem:help1}, each of them is monotone in each variable.

The volume condition for $F_{1,1}$: Let $(s_1,t_1),(s_2,t_2)\in B_{1,1}$ with $(s_1,t_1)\le (s_2,t_2)$ be given, then the volume condition for $F$ gives
$$
F(s_2,t_2)+F(s_1,t_1) \ge F(s_1,t_2)+F(s_2,t_1).
$$
Adding to both sides $-F(s_2,t_0^+) - F(s_0^+,t_2)+F(s_0^+,t_0^+)- F(s_1,t_0^+)-F(s_0^+,t_1)+ F(s_0^+,t_0^+)$, we obtain $F_{1,1}(s_2,t_2)+F_{1,1}(s_1,t_1) \ge F_{1,1}(s_1,t_2)+F_{1,1}(s_2,t_1)$. The other possibilities of $(s_1,t_1), (s_2,t_2)\in \mathbb R^2$ are trivial.
In addition, $F_{1,1}(s_1,t)\le F_{1,1}(s_2,t)$ and $F_{1,1}(s,t_1)\le F_{1,1}(s,t_2)$ whenever $s_1\le s_2$, $t_1\le t_2$, $s,t \in \mathbb R$: Take $s_0< s'\le s_1<s_2$ and $t_2\ge t>t'>t_0$. Then from the volume conditions for $F$ on semi-closed rectangles $A_1=[s',s_1)\times [t',t)$ and $A_2=[s',s_2)\times [t',t)$, we have $V(F,A_1)\le V(F,A_2)$, which entails
\begin{align*}
F(s_2,t)+F(s',t')-F(s',t)-F(s_2,t')&\ge F(s_1,t)+F(s',t')-F(s',t)-F(s_1,t')\\
F(s_2,t)+F(s_0^+,t_0^+)-F(s_0^+,t)-F(s_2,t_0^+)&\ge F(s_1,t)+F(s_0^+,t_0^+)-F(s_0^+,t)-F(s_1,t_0^+)\\
F_{1,1}(s_2,t)\ge F_{1,1}(s_1,t).
\end{align*}
Similarly for the second coordinate.

The continuity $\bigwedge_s F_{1,1}(s,t)=0=\bigwedge_t F_{1,1}(s,t)$ is trivial due to definition of $F_{1,1}$ because, for each $s< s_0$ or $t< t_0$, we have $F_{1,1}(s,t)=0$.

The continuity $\bigvee_{(s_1,t_1)<(s,t)}F_{1,1}(s_1,t_1)=F_{1,1}(s,t)$ follows from Lemma~\ref{lem:help1}(1) and Lemma~\ref{lem:help2}.

Then all four mappings take values in the radical $\Rad(M)$.
Denote $u_{i,j}=\bigvee_{s,t}F_{i,j}(s,t)$. We have $u_{0,0}=F(s_0,t_0)$, $u_{0,1}= F(s_0,\infty)-F(s_0,t_0^+)$, $F_{1,0}= F(\infty,t_0)-F(s_0^+,t_0)$, and $u_{1,1}= u-F(s_0^+,\infty)-F(\infty,t_0^+)+F(s_0^+,t_0^+)= F(\infty,\infty)-F(s_0^+,\infty)-F(\infty,t_0^+)+F(s_0^+,t_0^+)$. Therefore, each $F_{i,j}$ is a two-dimensional spectral resolution on the $\sigma$-complete interval MV-algebra $[0,u_{i,j}]\subset \Rad(M)$, and so they could be extended to two-dimensional observables $x_{i,j}$, $i,j=0,1$, on $[0,u_{i,j}]$, see \cite{DvLa2}, such that $x_{i,j}((-\infty,s)\times (-\infty,t))=F_{i,j}(s,t)$, $s,t \in \mathbb R$. In addition, $x_{i,j}(A)=x_{i,j}(A\cap B_{i,j})$, $A \in \mathcal B(\mathbb R^2)$.

We would like to glue the observables, but obviously the four observables does not take care of what is happening on the characteristic point and rays going through the characteristic point. So we have to define, in addition, two one-dimensional spectral resolutions:
\begin{align*}
&F_0(s)=\bigwedge_{t>t_0}F(s,t)-F(s,t_0)= F(s,t_0^+)-F(s,t_0),\quad s\in \mathbb R,\\ &F_1(t)=\bigwedge_{s>s_0}F(s,t)-F(s_0,t)= F(s_0^+,t)-F(s_0,t),\quad t \in \mathbb R.
\end{align*}
\noindent
\vspace{3mm}
\textit{Claim: $F_0$ and $F_1$ are two-dimensional pseudo spectral resolutions.}
\vspace{2mm}

\begin{proof}
We prove the case of $F_0$. Monotonicity of $F_0$: For each $t>t_0$ and $s_1<s_2$, we have $F(s_1,t)-F(s_1,t_0)\leq F(s_2,t)-F(s_2,t_0)$ (by the volume condition). Denote $F(s,t_0^+):=\bigwedge_{t>t_0}F(s,t)$. Then $F(s,t_0^+)\in \Rad(M)$ if $s<s_0$, otherwise $F(s,t_0^+)\in \Rad(M)'$.
Clearly the mapping $s\mapsto F(s,t_0^+)$ is monotone and bounded, so there is $F(\infty,t_0^+)\in \Rad(M)'$. The same holds for $s\mapsto F(s,t_0)$: It is surely monotone and it is bounded as, for each real $s>s_0+1$, we have $F(s,t_0)-F(s_0+1,t_0)\leq F(s,t^+_0)-F(s_0+1,t^+_0)\leq F(\infty,t_0^+)-F(s_0+1,t^+_0)\in \mathrm{Rad}(M)$.
So we can define $F(\infty,t_0)$. Using Lemma~\ref{lem:help1},
we observe that $\bigvee_{s}(F(s,t^+_0)-F(s,t_0))=F(\infty,t^+_0)-F(\infty,t_0)$.
In the opposite case, we get $0\leq \bigwedge_{s}(F(s,t^+_0)-F(s,t_0))\leq F(-\infty,t^+_0)=0$.
An application of the special case in Lemma~\ref{lem:help1}(1) together with Lemma~\ref{lem:help2} gives us for each $t_1\in\mathbb{R}$:
\begin{align*}
\bigvee_{t<t_1}F_0(t)&=
\bigvee_{t<t_1}(F(s_0^+,t)-F(s_0,t))= \bigvee_{t<t_1}F(s_0^+,t)-\bigvee_{t<t_1}F(s_0,t)\\
&= \bigvee_{t<t_1}\bigwedge_{s>s_0}F(s,t)- \bigvee_{t<t_1}F(s_0,t)
=\bigwedge_{s>s_0} F(s,t_1)-F(s_0,t_1)=F_0(t_1).
\end{align*}
Now, we show that $\bigwedge_s F_0(t)=0$: Using monotonicity of $F_0$ and Lemma~\ref{lem:help1}(2), we have
$$
\bigwedge_s F_0(s) = \bigwedge_s(F(s,t_0^+)-F(s,t_0))= \bigwedge_s F(s,t_0^+) -\bigwedge_s F(s,t_0)\le \bigwedge_s(F(s,t_0^+ +1)- 0)=0-0=0.
$$
Then $F_0$ is a one-dimensional spectral resolution on a lexicographic MV-algebra $\Gamma(\mathbb Z\lex G,(1,-g_0))=[0,u_0]$, with $g_0\in G^+$ such that $(1,-g_0)=u_0$, where $u_0=F(\infty,t_0^+)-F(\infty,t_0)\in \Rad(M)'$.

Similarly, $F_1$ is a one-dimensional spectral resolution on a lexicographic MV-algebra $\Gamma(\mathbb Z\lex G,(1,-g_1))=[0,u_1]$, where $g_1\in G^+$ is such that $(1,-g_1)=u_1$ and $u_1=\bigvee_t F_0(t)= F(s_0^+,\infty)-F(s_0,\infty)\in \Rad(M)'$.
\end{proof}

Now, we continue in the proof of Theorem \ref{th:TwoMV}. The result \cite[Thm 4.8]{DDL}, which holds also for our lexicographic MV-algebras, gives us an existence of one-dimensional observables $x_0$ and $x_1$ on $[0,u_0]$ and $[0,u_1]$, respectively, extending the spectral resolutions $F_0$ and $F_1$.

Finally, we can glue the above observables to define an observable $x$ on $M$ by the prescription:
\begin{equation}\label{eq:obser'}
x(A)=\sum_{i,j=0,1}x_{i,j}(A)+\sum_{i=0}^1 x_i(\pi_i(A\setminus\{(s_0,t_0)\}))+\chi_{(s_0,t_0)}(A)\cdot x_0(\{s_0\}), \quad A \in \mathcal B(\mathbb R^2),
\end{equation}
or, equivalently,
\begin{equation}\label{eq:obser2}
x(A)=\sum_{i,j=0,1}x_{i,j}(A)+\sum_{i=0}^1 x_i(\pi_i(A))-\chi_{(s_0,t_0)}(A)\cdot x_0(\{s_0\}), \quad A \in \mathcal B(\mathbb R^2),
\end{equation}
where $\pi_0$ and $\pi_1$ are projections from  $\mathbb R^2$ onto $\mathbb R$ such that $\pi_0(s,t)=s$, $\pi_1(s,t)=t$, $(s,t)\in \mathbb R^2$.
One can verify
\begin{align*}
x_0(\{s_0\})
&=\bigwedge_{s>s_0}(\bigwedge_{t>t_0}F(s,t)-F(s,t_0)) -(\bigwedge_{t>t_0}F(s_0,t)-F(s_0,t_0))\\
&=\bigwedge_{s>s_0,t>t_0} [(F(s,t)-F(s,t_0))-(F(s_0,t)+F(s_0,t_0))]\\
&= F(s_0^+,t_0^+) - F(s_0^+,t_0)- F(s_0,t_0^+)+ F(s_0,t_0)\\
&=x_1(\{t_0\}).
\end{align*}
Moreover, $x_0(\{s_0\})= x_1(\{t_0\})\in \Rad(M)'$.

We need to verify $x$ is really a two-dimensional observable on $M$.
See that (what should be the ``measure" of the upper half-plane)
\begin{align*}
x(B_{0,1}\cup B_{1,1})
&=x_{0,1}(\mathbb{R}^2)+x_1((t_0,\infty))+x_{1,1}(\mathbb{R}^2)\\&= [F(s_0,\infty)-F(s_0,t_0^+)]\\
&+[F(s_0^+,\infty)-F(s_0,\infty)-F(s_0^+,t_0^+)+F(s_0,t_0^+)]\\
&+[F(\infty,\infty)-F(\infty,t_0^+)-F(s_0^+,\infty)+F(s_0^+,t_0^+)]\\ &=F(\infty,\infty)-F(\infty,t_0^+).
\end{align*}
Similarly (the ``measure" of the lower half-plane), $x(B_{0,0}\cup B_{1,0})= x_{0,0}(\mathbb{R}^2)+x_1((-\infty,t_0))+x_{1,0}(\mathbb{R}^2) + x_0(\mathbb R) =F(\infty,t_0)+ x_0(\mathbb R)$.
Now putting all the pieces together we obtain $x(\mathbb{R}^2)=F(\infty,\infty)-F(\infty,t_0^+)+ F(\infty,t_0)+x_0(\mathbb{R})=F(\infty,\infty)=u$. Since $0\le x(A)\le x(\mathbb R^2)=u$, we see that $x(A)$ is correctly defined by (\ref{eq:obser2}) and it belongs to $M$ for every $A \in \mathcal B(\mathbb R^2)$. Moreover, (i) $x$ is monotone (see (\ref{eq:obser'})), (ii) if $A_1$ and $A_2$ are disjoint Borel sets in $\mathbb R^2$, then $x(A_1\cup A_2)=x(A_1)+x(A_2)$, and (iii) if $A=\bigcup_i A_i$, where $(A_i)_i$ is a sequence of non-decreasing Borel sets from $\mathcal B(\mathbb R^2)$, then $x(A)=\bigvee_i x(A_i)$.

It is tedious but straightforward to check the equality $F(s,t)=x((-\infty,s)\times(-\infty,t))$. For example, if $(s,t)\in B_{1,1}$, then
\begin{align*}
x((-\infty,s)\times(-\infty,t))&= F_{1,1}(s,t)+F_{0,1}(s_0,t)+ F_{0,0}(s_0,t_0)+ F_{1,0}(s,t_0)+F_0(s)+F_1(t)-x_0(\{s_0\})\\
&=[F(s,t)-F(s_0^+,t)-F(s,t_0^+) + F(s_0^+,t_0^+)]\\
&+[F(s_0,t)-F(s_0,t_0^+)] + F(s_0,t_0)+ [F(s,t_0)-F(s^+_0,t_0)]\\
&+ [F(s,t_0^+)- F(s,t_0)]+ [F(s_0^+,t)-F(s_0,t)]\\
&- [F(s_0^+,t_0^+)-F(s_0^+,t_0)- F(s_0,t_0^+)+ F(s_0,t_0)]\\
&=F(s,t).
\end{align*}

Uniqueness: Let $y$ be a two-dimensional observable on $M$ such that $F(s,t)=y((-\infty,s)\times (-\infty,t))$, $s,t \in \mathbb R^2$ and let $\mathcal K=\{A \in \mathcal B(\mathbb R^2)\colon y(A)=x(A)\}$. Then $\mathcal K$ contains $\mathbb R^2$, all intervals of the form $(-\infty,s)\times (-\infty,t)$, and is closed under complements and unions of disjoint sequences, i.e. $\mathcal K$ is a Dynkin system and by the Sierpi\'nski Theorem, \cite[Thm 1.1]{Kal}, $\mathcal K = \mathcal B(\mathbb R^2)$, i.e. $x=y$. Hence, $x$ is a unique two-dimensional observable on $M$ in question.
\end{proof}

The following second basic theorem for perfect effect algebras with (RDP) follows the same proof as that of Theorem \ref{th:TwoMV}. We notify that perfect effect algebras are categorically equivalent to Abelian directed po-group with interpolation, see \cite[Thm 5.7]{Dvu1}, and one-dimensional spectral resolutions on perfect effect algebras were studied in \cite{304,DvLa1}.

\begin{theorem}\label{th:TwoEA}
If $E=\Gamma_{ea}(\mathbb Z\lex G,(1,0))$ is a perfect effect algebra, where $G$ is a directed monotone $\sigma$-complete po-group with interpolation, then every two-dimensional spectral resolution can be extended to a unique two-dimensional observable on $E$.
\end{theorem}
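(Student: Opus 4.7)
The plan is to reproduce the strategy of the proof of Theorem \ref{th:TwoMV}, replacing appeals to $\ell$-group lattice operations by appeals to the interpolation–based existence lemmas already proved in this section, in particular Lemmas \ref{le:infty}, \ref{lem:help1}, and \ref{lem:help2}. First I would fix the unique characteristic point $(s_0,t_0)\in\mathbb R^2$ (guaranteed by the one-block structure of a perfect effect algebra) and split the plane into the four blocks $B_{0,0},B_{0,1},B_{1,0},B_{1,1}$ determined by $(s_0,t_0)$, exactly as in the MV case. On each block I would define the corresponding pseudo spectral resolution $F_{i,j}$ by the same formulas; the subtraction differences appearing inside $F_{0,1},F_{1,0}$ and especially $F_{1,1}$ make sense in $E$ because the volume condition plus Lemma \ref{le:infty} guarantees the existence of all the necessary infima $F(s_0^+,t), F(s,t_0^+), F(s_0^+,t_0^+)$, and because the volume inequality forces the resulting elements to lie above $0$ and inside $\mathrm{Rad}(E)$ on the relevant block.

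Next I would check that each $F_{i,j}$ is a two-dimensional pseudo spectral resolution on the effect-algebra interval $[0,u_{i,j}]\subseteq \mathrm{Rad}(E)$, where $u_{i,j}=\bigvee_{s,t}F_{i,j}(s,t)$. The volume condition, monotonicity in each coordinate, and the continuity conditions $\bigvee_{(s',t')\ll(s,t)}F_{i,j}(s',t')=F_{i,j}(s,t)$ and $\bigwedge_s F_{i,j}(s,t)=\bigwedge_t F_{i,j}(s,t)=0$ all follow exactly as in Theorem \ref{th:TwoMV}: Lemma \ref{lem:help1}(1) lets me pass the common supremum through the four-term expression defining $F_{1,1}$, while Lemma \ref{lem:help2} handles the mixed supremum/infimum $F(s^-,t^+)$. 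Since each interval $[0,u_{i,j}]$ sits inside $\mathrm{Rad}(E)$, which is a Dedekind monotone $\sigma$-complete directed po-group with interpolation (this is exactly the setting handled in \cite{DvLa2,DvLa3} for Dedekind $\sigma$-complete effect algebras with (RDP)), I can extend each $F_{i,j}$ to a two-dimensional observable $x_{i,j}$ on $[0,u_{i,j}]$.

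To take care of the "cross" through the characteristic point I would, as in the MV proof, introduce
\[
F_0(s)=F(s,t_0^+)-F(s,t_0),\qquad F_1(t)=F(s_0^+,t)-F(s_0,t),
\]
verify that both are one-dimensional spectral resolutions on suitable lexicographic intervals $[0,u_0]$ and $[0,u_1]$ with $u_0,u_1\in \mathrm{Rad}(E)'$, and extend them to one-dimensional observables $x_0,x_1$ using the one-dimensional effect-algebra correspondence from \cite{DDL,DvLa,DvLa1} (applicable because $[0,u_i]=\Gamma_{ea}(\mathbb Z\lex G,(1,-g_i))$ still sits inside a lexicographic structure with directed Dedekind monotone $\sigma$-complete $G$). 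The monotonicity and continuity checks for $F_0,F_1$ proceed word-for-word as in the MV case by invoking Lemmas \ref{lem:help1} and \ref{lem:help2} instead of lattice identities, and one verifies $x_0(\{s_0\})=x_1(\{t_0\})\in \mathrm{Rad}(E)'$ as before.

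Finally, I would glue the six pieces by the same prescription (\ref{eq:obser2})
\[
x(A)=\sum_{i,j=0,1}x_{i,j}(A)+\sum_{i=0}^1 x_i(\pi_i(A))-\chi_{(s_0,t_0)}(A)\cdot x_0(\{s_0\}),\qquad A\in\mathcal B(\mathbb R^2),
\]
check that $x(\mathbb R^2)=F(\infty,\infty)=1$ by the same telescoping computation, and verify $x((-\infty,s)\times(-\infty,t))=F(s,t)$ block by block. Uniqueness then follows by the Sierpi\'nski/Dynkin argument as in Theorem \ref{th:TwoMV}. The main obstacle is the same one that was anticipated when setting up Lemma \ref{le:infty} and Lemma \ref{lem:help1}: without an $\ell$-group structure one must justify every supremum, infimum, and the commutation of these with finite addition/subtraction in $E$. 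Provided those lemmas are applied carefully wherever the MV proof silently uses lattice operations, no new idea beyond the MV argument is required.
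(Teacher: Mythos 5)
Your proposal is correct and follows exactly the route the paper itself takes: the paper proves Theorem \ref{th:TwoEA} simply by declaring that it ``follows the same proof as that of Theorem \ref{th:TwoMV}'', noting the categorical equivalence of perfect effect algebras with Abelian directed interpolation po-groups and citing the one-dimensional effect-algebra results. Your more detailed accounting of where Lemmas \ref{le:infty}, \ref{lem:help1}, and \ref{lem:help2} must replace the $\ell$-group lattice operations is precisely the content the paper leaves implicit.
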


\section{$n$-dimensional Spectral Resolutions}%6

Now, we deal with the general case of $n$-dimensional spectral resolutions for general lexicographic effect algebras $E=\Gamma_{ea}(H\lex G,(u,0))$.

The following lemma describes what does happen with monotone sequences appearing in the volume condition:

\begin{lemma}\label{lem:help3}
Suppose we have a non-decreasing sequence $(a_\delta^i)_i$ of elements of an effect algebra $E$, such that all the sequences are non-decreasing (non-increasing, respectively) for each $\delta\in\{0,1\}^n$. Moreover, suppose $(\sum_{\delta}(-1)^{\pi(\delta)}a_\delta^i)_i$ is non-decreasing or non-increasing. Then
$$
\bigvee_i\sum_{\delta}(-1)^{\pi(\delta)} a_\delta^i=\sum_\delta (-1)^{\pi(\delta)}\bigvee_i a^i_\delta,
(\text{respectively,\ }
\bigvee_i\sum_{\delta}(-1)^{\pi(\delta)} a_\delta^i=\sum_\delta (-1)^{\pi(\delta)}\bigwedge_i a^i_\delta),
$$
or
$$
\bigwedge_i\sum_{\delta}(-1)^{\pi(\delta)} a_\delta^i=\sum_\delta (-1)^{\pi(\delta)}\bigvee_i a^i_\delta,
(\text{respectively,\ }
\bigwedge_i\sum_{\delta}(-1)^{\pi(\delta)} a_\delta^i=\sum_\delta (-1)^{\pi(\delta)}\bigwedge_i a^i_\delta),
$$
where $\pi(\delta)=|\{d_i\colon \delta=(\delta_1,\ldots,\delta_n)\}|$ is the number of zero coordinates in $\delta=(\delta_1,\ldots,\delta_n)\in \{0,1\}^n$.

The appropriate version of the equation holds in each of the four combinations in the following sense: $(a_\delta^i)_i$'s are non-decreasing/non-increasing and  $(\sum_{\delta}\mathrm{sgn}(\delta)a_\delta^i)_i$ is non-decreasing/non-increasing.
\end{lemma}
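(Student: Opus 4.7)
The plan is to generalize the proof of Lemma~\ref{lem:help1} from the $n=2$ situation (four sequences) to the present situation involving $2^n$ sequences indexed by $\delta\in\{0,1\}^n$. The crucial observation is that the alternating sum $\sum_\delta (-1)^{\pi(\delta)}a_\delta^i$ naturally splits into two purely positive sums according to the parity of $\pi(\delta)$: writing $P_i=\sum_{\pi(\delta)\ \text{even}}a_\delta^i$ and $N_i=\sum_{\pi(\delta)\ \text{odd}}a_\delta^i$, the alternating sum is $P_i-N_i$, and the hypothesis that all component sequences are monotone makes $(P_i)_i$ and $(N_i)_i$ monotone in the same direction. I would then run essentially the same two-step argument as in Lemma~\ref{lem:help1}, treating $P_i$ and $N_i$ as the analogues of the pairs $a_i+d_i$ and $b_i+c_i$ appearing there.

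Concentrating on the principal case where all $(a_\delta^i)_i$ are non-decreasing with suprema $a_\delta$ and the alternating sum $(P_i-N_i)_i$ is non-decreasing, I would first establish existence of $S:=\bigvee_i(P_i-N_i)$ in $E$: since $E=\Gamma_{ea}(\mathbb{Z}\lex G,(n,-g_0))$ (or $\Gamma_{ea}(G,u)$), the remark opening the proof of Lemma~\ref{lem:help1}, showing that a monotone bounded sequence in such an $E$ admits supremum/infimum coinciding with the one taken in the ambient po-group, applies verbatim; the alternating sum is bounded above by, for instance, $P_\infty-N_1$ in the po-group sense. For the inequality $\sum_\delta(-1)^{\pi(\delta)}a_\delta \le S$, rewrite each $P_i-N_i\le S$ as $P_i\le S+N_i$, observe that both sides are non-decreasing in $i$, and pass to the supremum to obtain $\bigvee_i P_i\le S+\bigvee_i N_i$, which is the required inequality. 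For the reverse inequality $P_i-N_i\le \sum_\delta(-1)^{\pi(\delta)}a_\delta$ at a fixed $i$, I would use monotonicity of the alternating sum itself: for every $j\ge i$, $P_i-N_i\le P_j-N_j$ rewrites as $P_i+N_j\le P_j+N_i$; letting $j\to\infty$ (pass to the supremum in $j$) yields $P_i+\bigvee_j N_j\le \bigvee_j P_j+N_i$, which rearranges to the desired inequality.

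The three remaining cases are settled by the same mechanism, swapping $\bigvee$ for $\bigwedge$ and reversing monotonicity where appropriate, exactly mirroring cases (2), (3), (4) of Lemma~\ref{lem:help1}: whenever the component sequences are non-increasing, replace suprema by infima in the splitting $P_i,N_i$; whenever the alternating sum is non-increasing (while components go up) or non-decreasing (while components go down), one obtains, respectively, an infimum or a supremum for the alternating-sum sequence, and the same ``bounded monotone'' argument in $E$ secures its existence. In each case the existence of the limit on the right-hand side in $E$ follows \emph{a posteriori}, as remarked in Lemma~\ref{lem:help1}, from the fact that it exists in the ambient po-group $\mathbb{Z}\lex G$ (or $G$) and equals $S$, which lies in $E$.

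The main obstacle I anticipate is not conceptual but notational: one must make sure that the intermediate sums $P_i$ and $N_i$, which are not necessarily elements of the effect algebra $E$ (they might overshoot the unit), are handled consistently in the ambient po-group $\mathbb{Z}\lex G$ (or $G$), and that the cancellations used to pass from $P_i+N_j\le P_j+N_i$ to the alternating-sum inequality are legitimate po-group manipulations. This is exactly what is done implicitly in Lemma~\ref{lem:help1}, where differences such as $a+d-b-c$ are interpreted in the underlying po-group; we invoke the same convention, and the conclusion that the relevant alternating sums lie in $E$ follows from the stated hypothesis that $(\sum_\delta(-1)^{\pi(\delta)}a_\delta^i)_i$ is a sequence in $E$.
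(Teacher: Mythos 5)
Your proposal is correct and matches the paper's intent exactly: the paper's own proof of this lemma is the single sentence ``The proof proceeds in a similar way as that for Lemma~\ref{lem:help1}, only more sequences are involved,'' and your grouping of the $2^n$ summands by the parity of $\pi(\delta)$ into $P_i$ and $N_i$ is precisely the natural way to reduce to the two-block argument of that lemma. The supporting details you supply (the bounded-monotone existence argument in $E$, the po-group manipulation $P_i+N_j\le P_j+N_i$, and the a posteriori existence of the right-hand side) all track the paper's proof of Lemma~\ref{lem:help1} faithfully.
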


\begin{proof}
The proof proceeds in a similar way as that for Lemma~\ref{lem:help1}, only more sequences are involved.
\end{proof}

\begin{lemma}\label{le:+,-}
The value $F(t_1^{\triangle_1},\ldots,t_n^{\triangle_n})$, where $\triangle_i\in\{+,-\}$ is well defined. That is, whenever we have an expression of the form $\Xi_1\cdots \Xi_n F(s_1,\ldots,s_n)$, where each $\Xi_i$ is either $\bigvee_{t_i>s_i}$ or $\bigwedge_{t_i<s_i}$, we may change an order of the $\Xi_i$'s without changing the value of the expression.
\end{lemma}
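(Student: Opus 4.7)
The plan is to reduce the problem to commuting two adjacent operators, which can be done repeatedly since the symmetric group on $n$ symbols is generated by adjacent transpositions. It therefore suffices to show that, for every $1\le i<n$,
\[
\Xi_1\cdots \Xi_{i-1}\Xi_i\Xi_{i+1}\Xi_{i+2}\cdots \Xi_n F
\;=\;
\Xi_1\cdots \Xi_{i-1}\Xi_{i+1}\Xi_i\Xi_{i+2}\cdots \Xi_n F.
\]
The outer operators $\Xi_1,\ldots,\Xi_{i-1}$ are applied identically to both sides, so it is enough to verify the identity after stripping them. Absorbing the inner operators into
\[
G(s_i,s_{i+1}) := \Xi_{i+2}\cdots\Xi_n\, F(s_1,\ldots,s_n),
\]
with the remaining coordinates $s_1,\ldots,s_{i-1},s_{i+2},\ldots,s_n$ treated as fixed parameters, reduces the problem to showing $\Xi_i\Xi_{i+1}G=\Xi_{i+1}\Xi_i G$.

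If $\Xi_i$ and $\Xi_{i+1}$ are of the same type (both suprema or both infima), the interchange is the elementary fact that iterated directed suprema, resp.\ infima, of a monotone two-variable function can be taken in either order in a monotone $\sigma$-complete effect algebra, once the relevant bounds exist. The essential case is mixed type, and this is precisely the situation addressed by Lemma~\ref{lem:help2} for two-dimensional spectral resolutions. To invoke that lemma for $G$ I need to check two properties: that $G$ is monotone in each of its two variables, and that $G$ satisfies the two-dimensional volume condition on arbitrary rectangles. Monotonicity transfers to $G$ because monotone suprema and infima of monotone sequences remain monotone. The volume inequality for $F$, written at fixed values of the remaining coordinates as an alternating sum of four evaluations, is then pushed through the inner operators $\Xi_{i+2},\ldots,\Xi_n$ one at a time by means of Lemma~\ref{lem:help3} (coordinate-wise, Lemma~\ref{lem:help1}); these lemmas say exactly that plus and minus in the lexicographic effect algebra commute with simultaneous monotone limits of compatible sequences, yielding the desired four-term inequality for $G$. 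With both properties in hand, the proof of Lemma~\ref{lem:help2} transfers verbatim.

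The main obstacle I foresee is the bookkeeping required to justify each application of Lemma~\ref{lem:help3}: one must verify at every stage that all four sequences are eventually monotone and confined to a single block $E_h$ of the lexicographic effect algebra, so that the relevant suprema and infima exist in $E$. The suprema towards $+\infty$ in any coordinate are supplied by Lemma~\ref{le:infty}; the infima towards $-\infty$ are handled by property (iii) of Definition~\ref{de:n-dim}; at a finite boundary, property (ii) together with the block-infimum property (v) keep the four sequences bounded inside a common block. Once this is verified, the volume inequality propagates cleanly to $G$, the mixed-type case follows from Lemma~\ref{lem:help2}, and the lemma follows by induction on the number of adjacent swaps needed to transform one order of the $\Xi_i$'s into another.
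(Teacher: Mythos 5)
Your proposal is essentially sound and reaches the same destination as the paper, but by a visibly different route. The paper argues by induction on the dimension: fixing a coordinate of $F$ yields an $(n-1)$-dimensional pseudo spectral resolution, so the inner operators $\Xi_2,\ldots,\Xi_n$ may be permuted freely by the induction hypothesis; this reduces everything to exchanging one whole block of suprema with one whole block of infima, which is then settled by a direct $n$-variable squeeze $0\le \bigwedge_{\bar r}F(\bar s,\bar r)-\bigwedge_{\bar r}F(\bar q,\bar r)\le F(\bar s,\bar r)-F(\bar q,\bar r)$, the monotonicity of $\bar r\mapsto F(\bar s,\bar r)-F(\bar q,\bar r)$ being obtained by telescoping along chains of points that differ in one coordinate at a time. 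You instead reduce to adjacent transpositions and apply the two-dimensional Lemma~\ref{lem:help2} to the derived function $G=\Xi_{i+2}\cdots\Xi_n F$. Both arguments run on the same engine (the two-variable squeeze plus Lemma~\ref{lem:help1}/Lemma~\ref{lem:help3} to commute group operations with monotone limits); the trade-off is that the paper must generalize the squeeze to $n$ variables, while you must verify that the hypotheses of the two-variable lemma survive the inner limit operators. Your plan for the latter --- propagating the volume condition term by term through $\Xi_{i+2},\ldots,\Xi_n$ via Lemma~\ref{lem:help3}, with the block and existence bookkeeping you describe --- is workable, and the same-type case is indeed elementary.

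One item is missing from your checklist. The proof of Lemma~\ref{lem:help2} does not use only monotonicity and the volume condition of its input: the decisive step is that $F(s_1,t')-F(s,t')\searrow 0$ as $s\nearrow s_1$, i.e.\ the left-continuity $\bigvee_{s<s_1}F(s,t')=F(s_1,t')$ supplied by condition (ii) of Definition~\ref{de:n-dim} together with Lemma~\ref{lem:help1}. So before the proof of Lemma~\ref{lem:help2} can ``transfer verbatim'' to $G$, you must also show that $G$ inherits this left-continuity in the coordinate carrying the supremum operator, that is, $\bigvee_{s<s_1}G(s,t)=G(s_1,t)$. This propagates through the inner operators by exactly the same use of Lemma~\ref{lem:help1}/Lemma~\ref{lem:help3} that you already invoke for the volume condition, so it is a repairable omission rather than a flaw in the strategy; but as written your list of properties to be verified for $G$ is incomplete, and without left-continuity the squeeze in the mixed-type case does not close.
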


\begin{proof}
It holds in the case $n=2$ by Lemma~\ref{lem:help2}. If we fix any coordinate in $F$, we get an $(n-1)$-dimensional pseudo spectral resolution, so we can freely permute the $\Xi_2,\ldots,\Xi_n$ by the induction. Using this and some re-indexing, it is enough to prove for each $\bar{t}\in(\mathbb{R}\cup\{\pm\infty\})^n$ the equation
$$\bigvee_{s_1<t_1}\cdots \bigvee_{s_k<t_k}\bigwedge_{s_{k+1}<t_{k+1}}\cdots\bigwedge_{s_{n}<t_{n}} F(s_1,\ldots,s_n)=\bigwedge_{s_{k+1}<t_{k+1}}\cdots\bigwedge_{s_{n}<t_{n}} \bigvee_{s_1<t_1}\cdots \bigvee_{s_k<t_k}F(s_1,\ldots,s_n).$$

Let us rewrite it in an easier form: For each $\bar{s}\in(\mathbb{R}\cup\{\pm\infty\})^k$ and
$\bar{t}\in(\mathbb{R}\cup\{\pm\infty\})^{n-k}$,
$$
\bigvee_{\bar{q}\ll \bar{s}}\bigwedge_{\bar{r}\gg\bar{t}}F(\bar{q},\bar{r})= \bigwedge_{\bar{r}\gg\bar{t}}F(\bar{s},\bar{r}).
$$
(We have used the continuity property.)  For each $\bar{q}\ll \bar{s}$ and $\bar{r}\gg \bar{t}$, we shall prove the second inequality (the first one is trivial) in
\begin{equation}\label{eq:L2.2.1}
0\leq \bigwedge_{\bar{r}\gg\bar{t}}F(\bar{s},\bar{r})-\bigwedge_{\bar{r}\gg
\bar{t}}F(\bar{q},\bar{r})\leq F(\bar{s},\bar{r})-F(\bar{q},\bar{r}).
\end{equation}
If we prove~\eqref{eq:L2.2.1}, we are done, as the last expression clearly goes to $0$ as $\bar{q}$ goes to $\bar{s}$. We prove
\begin{equation}\label{eq:L2.2.2}
\bigwedge_{\bar{r}\gg\bar{t}}F(\bar{s},\bar{r})- \bigwedge_{\bar{r}\gg\bar{t}}F(\bar{q},\bar{r})= \bigwedge_{\bar{r}\gg\bar{t}}[F(\bar{s},\bar{r})-F(\bar{q},\bar{r})]
\end{equation}
which is clearly enough for us. We would like to apply the special case of Lemma~\ref{lem:help1}, but the infima are not taken over countable non-increasing sequences. The two infima on the left hand side may be in the obvious way (using monotony of $F$) rewritten to be taken over countable monotone sequences. In the case of right hand infimum, we have to verify that whenever $\bar{r}_0\leq \bar{r}_1$ then $F(\bar{s},\bar{r}_0)-F(\bar{q},\bar{r}_0)\leq F(\bar{s},\bar{r}_1)-F(\bar{q},\bar{r}_1)$ (which in fact proves the infimum is well defined). To achieve this, we cannot use the volume condition directly (as the occurring points of $\mathbb{R}^n$ are generally not vertices of some rectangle with edges parallel to axis), we have to find a sequence $\bar{s}=\bar{s}_0\geq\bar{s}_1\geq\cdots\geq\bar{s}_k=\bar{q}$ such that two consecutive elements differ at most in one coordinate.
Then we can write
$$
F(\bar{s},\bar{r}_0)-F(\bar{q},\bar{r}_0)=(F(\bar{s}_0, \bar{r}_0)-F(\bar{s_1},\bar{r}_0))+\cdots+(F(\bar{s}_{k-1}, \bar{r}_0)-F(\bar{s}_k,\bar{r}_0))
$$
and
$$
F(\bar{s},\bar{r}_1)-F(\bar{q},\bar{r}_1)=(F(\bar{s}_0, \bar{r}_1)-F(\bar{s_1},\bar{r}_1))+\cdots+(F(\bar{s}_{k-1}, \bar{r}_{1})-F(\bar{s}_k,\bar{r}_1)).
$$
Now $(F(\bar{s}_j,\bar{r}_0)-F(\bar{s}_{j+1},\bar{r}_0)) \leq(F(\bar{s}_j,\bar{r}_1)-F(\bar{s}_{j+1},\bar{r}_1))$, this could be proved finding another sequence $\bar{r}_0\leq\cdots\leq\bar{r}_1$ similar to $(\bar{s}_i)_i$ and using the volume condition in each step.

Hence, we can apply the particular case of Lemma~\ref{lem:help1}, to prove~\eqref{eq:L2.2.2} and so to finish the whole proof.
\end{proof}

Using Lemma \ref{le:+,-}, we can extend the notion of the difference operators $\Delta_i$ for $i=1,\ldots,n$ as follows. Let $\sigma_i,\tau_i\in \{+,-,\emptyset\}$ for $i=1,\ldots,n$. We set $-\infty=-\infty^+=-\infty^\emptyset$, $\infty=\infty^-=\infty^\emptyset$, moreover $-\infty^-$ and $\infty^+$ are not defined. For each $t\in \mathbb R\cup\{\pm\infty\}$, we put $t^\emptyset = t$. We linearly order these symbols as follows: If $s<t$, then $s^\sigma\le t^\tau$. If $s=t$, we assume $s^-\le s=s^\emptyset \le s^+$.

Lemma \ref{le:+,-} shows that we can define unambiguously expressions of the form $F(s^+,t^-,u,w^-,\infty)$, etc. Therefore, we can extend the domain of $F$ in the following sense. Let $\widehat{\mathbb R} :=\{r, r^+,r^-,r^\emptyset \colon r \in \mathbb R\}\cup \{-\infty, \infty\}$. Then we use the same symbol $F$ also for expressions $F(u_1,\ldots,u_n)$, where $u_1,\ldots,u_n \in \widehat{\mathbb R}$, that is, $F$ will be a function from $(\widehat{\mathbb R})^n$ into the MV-algebra $M$ or into an effect algebra $E$. Hence, let $i=1,\ldots,n$ be given. We extend formally $\Delta_i$ in the following way: Let $s,t\in \mathbb R\cup\{\pm \infty\}$, $\sigma_i,\tau_i\in \{-,+,\emptyset\}$, $s^{\sigma_i}\le t^{\tau_i}$. Then
$$
\Delta_i(s^{\sigma_i},t^{\tau_i})F(u_1,\ldots,u_n):= F(u_1,\ldots,u_{i-1},t^{\tau_i},u_{i+1},\ldots,u_n)-
F(u_1,\ldots,u_{i-1},s^{\sigma_i},u_{i+1},\ldots,u_n),
$$
where $u_1,\ldots,u_n \in \widehat{\mathbb R}$,
assuming that $s^{\sigma_i}$ and $t^{\tau_i}$ are defined.
Whence
\begin{equation}\label{eq:I}
\Delta_i(s^{\sigma_i},t^{\tau_i})\big(\Delta_j(s^{\sigma_j}, t^{\tau_j})F\big)= \Delta_j(s^{\sigma_j},t^{\tau_j})\big(\Delta_i(s^{\sigma_i},
t^{\tau_i})F\big)
\end{equation}
for each $i<j$.
More generally, let $(i_1,\ldots,i_n)$ be any permutation of $(1,\ldots,n)$,  $s_i\le t_i$ from $\mathbb R\cup\{\pm\infty\}$ be such that $s_i^{\sigma_i}\le t_i^{\tau_i}$ are defined.

Then
\begin{equation}\label{eq:II}
\Delta_1(s_1^{\sigma_1},t_1^{\tau_1})\big(\cdots \big( \Delta_n(s_n^{\sigma_n},t_n^{\tau_n})F\big)\cdots\big)=
\Delta_{i_1}(s_{i_1}^{\sigma_{i_1}},t_{i_1}^{\tau_{i_1}}) \big(\cdots\big( \Delta_{i_n}(s_{i_n}^{\sigma_{i_n}},t_{i_n}^{\tau_{i_n}})F\big) \cdots\big).
\end{equation}
Therefore, we can put unambiguously
$$
\Delta_1(s_1^{\sigma_1},t_1^{\tau_1})\cdots \Delta_n(s_n^{\sigma_n},t_n^{\tau_n})F:= \Delta_1(s_1^{\sigma_1},t_1^{\tau_1})\big(\cdots \big( \Delta_n(s_n^{\sigma_n},t_n^{\tau_n})F\big)\cdots\big).
$$

If $s\le t\le v$, $\sigma_i,\tau_i\in \{+,-,\emptyset\}$ such that $s^{\sigma_i}\le t^{\tau_i}\le v^{\tau_i}$, then
\begin{equation}\label{eq:III}
\Delta_i(s^{\sigma_i},v^{\tau_i})F= \Delta_i(s^{\sigma_i},t^{\tau_j})F + \Delta_i(t^{\tau_j},v^{\tau_i})F.
\end{equation}

We note that if $s\le t$, then $\Delta_i(s^-,t^{\tau_i}) =\Delta_i(s,t^{\tau_i})$ and $\Delta_i(s^{\sigma_i},t^-)=\Delta_i(s^{\sigma_i},t)$. Therefore, we have the following lemma.

\begin{lemma}\label{le:IV}
If $i_1<\cdots<i_k$ are elements of $\{1,\ldots,n\}$, $1\le k\le n$, then
\begin{equation}\label{eq:IV}
\Delta_{i_1}(s_{i_1}^{\sigma_{i_1}},t_{i_1}^{\tau_{i_1}})\cdots \Delta_{i_k}(s_{i_k}^{\sigma_{i_k}},t_{i_k}^{\tau_{i_k}})F(v_1,\ldots,v_n)\ge 0, \quad (v_1,\ldots,v_n)\in \mathbb R^n,
\end{equation}
where $\sigma_{i_l}$ and $\tau_{i_l}$ satisfy conditions to be the left hand side defined.
\end{lemma}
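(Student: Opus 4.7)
The plan is to argue in two stages. First, I would prove the inequality in the base case where every $s_{i_l}, t_{i_l}$ is real and all decorations $\sigma_{i_l}, \tau_{i_l}$ equal $\emptyset$. Then I would extend to decorated arguments and to $\pm\infty$ by a monotone-limit argument. Throughout, I intend to work in the ambient po-group $H\lex G$, so that differences, non-negativity, and monotone limits all behave as expected.

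For the base case I would embed the $k$-fold partial difference into a full $n$-fold volume. Fix real $v_j$ for $j\notin\{i_1,\ldots,i_k\}$ and, for each such $j$, introduce an auxiliary real parameter $u_j<v_j$. Applying the volume condition (i) to the full rectangle with edges $[s_{i_l},t_{i_l}]$ in the differenced coordinates and $[u_j,v_j]$ in the remaining ones gives
$$
\Delta_{i_1}(s_{i_1},t_{i_1})\cdots\Delta_{i_k}(s_{i_k},t_{i_k})\,\Delta_{j_1}(u_{j_1},v_{j_1})\cdots\Delta_{j_{n-k}}(u_{j_{n-k}},v_{j_{n-k}})\,F\ \ge\ 0,
$$
where $\{j_1,\ldots,j_{n-k}\}=\{1,\ldots,n\}\setminus\{i_1,\ldots,i_k\}$. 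Expanding the outer $n-k$ factors yields a $2^{n-k}$-term alternating sum of partial $k$-differences: exactly one summand has $v_j$ in every non-differenced slot, and this is precisely the target expression, while every other summand carries at least one coordinate equal to some $u_j$. Driving each $u_j\searrow-\infty$ and invoking condition (iii), every such ``bad'' summand vanishes, so that the target expression is realised as a monotone limit of non-negative quantities and is therefore itself non-negative.

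For the general case, every decorated argument $s^+$ (respectively $s^-$) is, by Lemma~\ref{le:+,-}, a well-defined infimum (respectively supremum) of $F$-values along a monotone net in the real arguments, while the symbols $\pm\infty$ are handled by Lemma~\ref{le:infty} together with conditions (iii)--(iv). I would write the decorated operator $\Delta_{i_1}(s_{i_1}^{\sigma_{i_1}},t_{i_1}^{\tau_{i_1}})\cdots\Delta_{i_k}(s_{i_k}^{\sigma_{i_k}},t_{i_k}^{\tau_{i_k}})F$ as a nested monotone limit of its undecorated counterparts and then commute each monotone limit past the $k$-fold alternating sum using Lemmas~\ref{lem:help1} and~\ref{lem:help3}. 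Since each term in the limit is non-negative by the base case, and non-negativity is preserved under suprema and infima in $H\lex G$, the desired inequality follows.

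The hard part will be precisely this second stage: the sequences appearing inside the alternating sum are not manifestly monotone term-by-term, so I would have to verify the sign-pattern and monotonicity hypotheses of Lemmas~\ref{lem:help1} and~\ref{lem:help3} at each level of the nested limit, and check at every step that the intermediate expressions remain inside the relevant order-interval of $H\lex G$ rather than escaping into the full po-group. This is essentially technical bookkeeping rather than a genuine conceptual difficulty, and mirrors the two-dimensional manipulations already carried out in the proof of Theorem~\ref{th:TwoMV}.
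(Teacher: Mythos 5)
Your proposal is correct, and its second stage is essentially the paper's own proof: the paper likewise reduces to $i_l=l$ by commutativity, approximates each decorated argument by a real sequence decreasing to it, applies the volume condition termwise, and then passes to infima after separating the positively and negatively signed members of the alternating sum (exactly the separate-the-signs device you defer to Lemmas \ref{lem:help1} and \ref{lem:help3}). Your first stage, deriving the $k$-fold real-argument case from the full $n$-fold volume by sending auxiliary coordinates to $-\infty$, is extra work the paper omits because it reads the volume condition as already asserting non-negativity of all partial increments (as it does, e.g., in the proof of Lemma \ref{le:infty}); the derivation is nonetheless sound, provided you note that the vanishing of the ``bad'' summands must itself be justified by the same rearrange-and-take-infima argument (each bad summand is an alternating sum of $F$-values, so it does not vanish termwise) rather than by a naive limit.
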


\begin{proof}
Due to (\ref{eq:I}), we can assume that $i_l=l$ for each $l=1,\ldots,k$. Choose sequences $(s^m_l)_m\searrow s_l$ and $(t^m_l)_m\searrow t_l$ for $l=1,\ldots,k$, $s^m_l\le t^m_l$ such that $s^m_l=s_l$ if $\sigma_l\in\{-,\emptyset\}$ and $t^m_l=t_l$ if $\tau_l\in \{-,\emptyset\}$, $m\ge 1$. Due to the volume condition, $\Delta_1(s^m_1,t^m_1)\cdots \Delta_k(s^m_k,t^m_k)(v_1,\ldots,v_n)\ge 0$. Expanding the left hand side and putting on the left hand side all members with the negative sign and on the right hand side all members with the positive sign. Let $\sum_{LS}(m)$ and $\sum_{RS}(m)$ be their sums. Then $\sum_{LS}(m)\le \sum_{RS}(m)$ for each $m\ge 1$ which yields $\bigwedge_m \sum_{LS}(m)\le \bigwedge_m\sum_{RS}(m)$. So we obtain on the left hand side the sum of all members of (\ref{eq:IV}) with the negative sign and on the right hand we get the sum of all members of (\ref{eq:IV}) with the positive sign which establishes (\ref{eq:IV}).
\end{proof}

\begin{lemma}\label{le:obser}
Let $F$ be an $n$-dimensional spectral resolution on a lexicographic effect algebra $E$, $n>1$, $i=1,\ldots,n$, and let $t_1\le t_2$ be from $\mathbb R\cup\{\pm \infty\}$. Then $F'(s_1,\ldots,s_{i-1},s_{i+1},\ldots,s_n):=
\Delta_i(t_1^{\sigma_i},t_2^{\tau_i})F(s_1,\ldots,s_n)$, $s_1,\ldots,s_{i-1},s_{i+1},\ldots,s_n \in \mathbb R$, where $\sigma_i$ and $\tau_i$ satisfy conditions to be $F'$ defined, is an $n-1$-dimensional pseudo spectral resolution.

Let $E=\Gamma_{ea}(\mathbb Z \lex G,(1,0))$ be perfect and $t_1\le t_2$, where $t_1,t_2\in \mathbb R$. Then
\begin{itemize}
\item[{\rm (i)}] $F'=\Delta_i(t_1,t_2)F$ has a characteristic point if and only if $t_1\le t^0_i$ and $t_2> t^0_i$;
\item[{\rm (ii)}] $F'=\Delta_i(t_1,t_2^+)F$ has a characteristic point if and only if $t_1\le t^0_i$ and $t_2\ge t^0_i$;
\item[{\rm (iii)}] $F'=\Delta_i(t^+_1,t^+_2)F$ has a characteristic point if and only if $t_1<t^0_i$ and $t_2\ge t^0_i$;
\item[{\rm (iv)}] $F'=\Delta_i(-\infty,t_2)F$ has a characteristic point if and only if $t_2>t^0_i$;
\item[{\rm (v)}] $F'=\Delta_i(-\infty,t_2^+)F$ has a characteristic point if and only if $t_2\ge t^0_i$;
\item[{\rm (vi)}] $F'=\Delta_i(t_1,\infty)F$ has a characteristic point if and only if $t_1< t^0_i$;
\item[{\rm (vii)}] $F'=\Delta_i(t_1^+,\infty)F$ has a characteristic point if and only if $t_1< t^0_i$;
\item[{\rm (viii)}] $F'=\Delta_i(-\infty,\infty)F$ has always a characteristic point.
\end{itemize}
In either case, if $(t^0_1,\ldots,t^0_n)$ is a unique characteristic point of $F$, then $(t^0_1,\ldots, t^0_{i-1},t^0_{i+1},\ldots, t^0_n)$ is a unique characteristic point of $F'$.
Moreover, $F'$ satisfies {\rm (v)} of Definition {\rm \ref{de:n-dim}}.

In particular, $\Delta_i(t^0_i,t^{0+}_i)F$ has a unique characteristic point, namely $(t^0_1,\ldots, t^0_{i-1},t^0_{i+1},\ldots, t^0_n)$.
\end{lemma}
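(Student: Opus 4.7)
My plan is to split the proof into two stages: a routine verification that $F'$ is an $(n-1)$-dimensional pseudo spectral resolution, and then, in the perfect case, a block-membership analysis of the relevant suprema yielding the eight case equivalences. For the first stage, the volume condition for $F'$ over an $(n-1)$-rectangle $R$ unfolds, after expanding the outer $\Delta_i(t_1^{\sigma_i},t_2^{\tau_i})$, into the signed sum of $F$ at the $2^n$ vertices of the $n$-rectangle obtained by inserting $[t_1^{\sigma_i},t_2^{\tau_i}]$ as the $i$-th edge of $R$; Lemma \ref{le:IV} shows this sum is non-negative. Condition (ii) for $F'$ follows from condition (ii) for $F$ by pushing the supremum through the alternating sum defining $\Delta_i$ via Lemma \ref{lem:help3}; condition (iii) is immediate because both summands of $F'$ tend to $0$ as $s_j\to-\infty$; and the upper bound $u_0=\bigvee F'$ exists by Lemma \ref{le:infty} applied to $F$, with Lemma \ref{le:+,-} used to make sense of $\infty$ in the $i$-th slot when $t_2=\infty$ or $t_1=-\infty$.

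In the perfect case $E=\Gamma_{ea}(\mathbb Z\lex G,(1,0))$, the only two blocks $\Rad(E)$ and $\Rad(E)'$ force $F'$ to have a characteristic point iff $u_0\in\Rad(E)'$, and the candidate point must be $(t^0_1,\ldots,t^0_{i-1},t^0_{i+1},\ldots,t^0_n)$ because membership of $F(s_1,\ldots,s_n)$ in $\Rad(E)'$ requires $s_j>t^0_j$ for every $j$. Using Lemma \ref{le:infty},
\[
u_0 \;=\; F(\infty,\ldots,t_2^{\tau_i},\ldots,\infty)\;-\;F(\infty,\ldots,t_1^{\sigma_i},\ldots,\infty),
\]
and the block of each summand is determined by the elementary facts that $F(\infty,\ldots,r,\ldots,\infty)\in\Rad(E)'$ iff $r>t^0_i$, $F(\infty,\ldots,r^+,\ldots,\infty)\in\Rad(E)'$ iff $r\ge t^0_i$, $F(\infty,\ldots,\infty,\ldots,\infty)=1\in\Rad(E)'$, and $F(\infty,\ldots,-\infty,\ldots,\infty)=0\in\Rad(E)$. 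Since a difference in $\Gamma_{ea}(\mathbb Z\lex G,(1,0))$ lies in $\Rad(E)'$ exactly when the minuend is in $\Rad(E)'$ and the subtrahend is in $\Rad(E)$, the eight combinations of $(\sigma_i,\tau_i)$ with endpoints finite or $\pm\infty$ yield the eight stated equivalences.

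Condition (v) at the candidate point will be verified by computing $a_{B'}=\bigwedge\{F'(s_1,\ldots,\hat s_i,\ldots,s_n):s_j>t^0_j,\ j\ne i\}$: pushing the infimum through $\Delta_i$ via Lemma \ref{lem:help3} and commuting infima and suprema via Lemma \ref{le:+,-} reduces $a_{B'}$ to $F(t^{0+}_1,\ldots,t_2^{\tau_i},\ldots,t^{0+}_n)-F(t^{0+}_1,\ldots,t_1^{\sigma_i},\ldots,t^{0+}_n)$, which lies in $\Rad(E)'$ by the same block-membership facts. The ``in particular'' assertion is the specialisation $t_1=t_2=t^0_i$, $\sigma_i=\emptyset$, $\tau_i=+$ of case (ii). I expect the principal technical obstacle to be the careful bookkeeping of strict versus non-strict comparisons with $t^0_i$ across the eight cases, which hinges on whether the $i$-th argument of $F$ is a plain value, a $+$-limit, or $\pm\infty$.
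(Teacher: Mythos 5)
Your plan is sound and, at its core, is the same as the paper's: both arguments reduce everything to deciding, for the minuend $F(\ldots,t_2^{\tau_i},\ldots)$ and the subtrahend $F(\ldots,t_1^{\sigma_i},\ldots)$, membership in $\Rad(E)'$ versus $\Rad(E)$ by comparing the $i$-th argument with $t^0_i$, and both verify the pseudo-spectral-resolution axioms via Lemma \ref{le:IV} and Lemmas \ref{lem:help1}--\ref{lem:help3}. The one genuine difference is where you test block membership: the paper works pointwise, characterizing when $F'(v'_2,\ldots,v'_n)\in\Rad(E)'$ for $(v'_2,\ldots,v'_n)\gg(t^0_2,\ldots,t^0_n)$, whereas you test at the single element $u_0=F(\infty,\ldots,t_2^{\tau_i},\ldots,\infty)-F(\infty,\ldots,t_1^{\sigma_i},\ldots,\infty)$. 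That is legitimate, but the implication ``$u_0\in\Rad(E)'$ implies some value of $F'$ lies in $\Rad(E)'$'' needs the extra observation that in $\Gamma_{ea}(\mathbb Z\lex G,(1,0))$ a family contained in $\{0\}\times G^+$ cannot have a supremum lying in $\{1\}\times G^-$ (such an element is never a \emph{least} upper bound of the family unless $G^+$ has a greatest element); you should state this, since the pointwise route avoids it entirely.

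Two further points. First, your method applied to case (vi) gives ``characteristic point iff $t_1\le t^0_i$'' (the minuend is $1\in\Rad(E)'$, and the subtrahend $F(\infty,\ldots,t_1,\ldots,\infty)\in\Rad(E)$ iff $t_1\le t^0_i$), not the strict inequality $t_1<t^0_i$ printed in the statement; the same non-strict condition also follows from (i) by letting $t_2\to\infty$. So your claim that the eight combinations ``yield the eight stated equivalences'' is not literally true: you should either flag (vi) as an apparent misprint or explain why $t_1=t^0_i$ fails there, which it does not. Second, in verifying condition (v) of Definition \ref{de:n-dim} you push infima through $\Delta_i$ via Lemma \ref{lem:help3}, but that lemma presupposes that the two individual infima exist; the paper obtains the existence of $\bigwedge\{F(\ldots,t_2^{\tau_i},\ldots)\}$ from condition (v) for $F$ itself (the element $a_B$ supplies a lower bound inside $\Rad(E)'$, and then Dedekind monotone $\sigma$-completeness applies), and the existence of $\bigwedge\{F(\ldots,t_1^{\sigma_i},\ldots)\}$ from the lower bound $0$ in $\Rad(E)$. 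Adding those justifications makes your argument complete and essentially equivalent to the paper's.
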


\begin{proof}
Without loss of generality, we can assume that $i=1$.

(1) First we deal with $F':=\Delta_1(t_1,t_2)F$. The volume condition is trivially satisfied, and using Lemma \ref{lem:help1}, we see that $F'$ satisfies conditions (ii), (iii), and (iv').

(vi). Assume that $E$ is perfect and let $(t^0_1,\ldots,t^0_n)$ be the unique characteristic point of $F$. We note that $F'(v'_2,\ldots,v'_n)\in \Rad(E)'$ iff $F(t_2,v'_2,\ldots,v'_n)\in \Rad(E)'$ and $F(t_1,v'_2,\ldots,v'_n)\in \Rad(E)$. The first condition entails $(t_2,v'_2,\ldots,v'_n)\gg (t^0_1,\ldots,t^0_n)$, i.e. $t_2>t^0_1$ and $(v'_2,\ldots,v'_n)\gg (t^0_2,\ldots,t^0_n)$, and therefore, from the second condition we conclude $t_1\le t^0_1$. In other words, $F'$ has a characteristic point iff $t_1\le t^0_1$ and $t_2> t^0_1$. Put $T_1(F'):=\{(v_2,\ldots,v_n) \in \mathbb R^{n-1}\colon F'(v_2,\ldots,v_n)\in \Rad(E)\}\ne \emptyset$. Let $(v'_2,\ldots,v'_n)\in T_1(F')$. Then $F(t_2,v'_2,\ldots,v'_n)\in \Rad(E)'$ and $F(t_1,v'_2,\ldots,v'_n)\in \Rad(E)$. Whence $(t_2,v'_2,\ldots,v'_n)\gg (t^0_1,\ldots,t^0_n)$ and $(v'_2,\ldots,v'_n)\gg (t^0_2,\ldots,t^0_n)$. Then $F(t_2,v'_2,\ldots,v'_n)\ge \bigwedge\{F(t_1,\ldots,t_n)\colon F(t_1,\ldots,t_n)\in \Rad(E)'\}$, so that $a_2:=\bigwedge\{F(t_2,v'_2,\ldots,v'_n)\colon F'(v'_2,\ldots,v'_n)\in \Rad(E)'\}$ exists in $E$ and it belongs also to $\Rad(E)'$. Since $\{F(t_1,v'_2,\ldots,v'_n)\in \Rad(E)\}$ has a lower bound in $\Rad(E)$, the element $a_1:=\bigwedge\{F(t_1,v'_2,\ldots,v'_n)\in \Rad(E)\}$ exists in $E$ and it belongs to $\Rad(E)$. Applying Lemma \ref{lem:help1}, we see $a_2-a_1 =\bigwedge \{F'(v'_2,\ldots,v'_n)\colon F'(v'_2,\ldots,v'_n)\in \Rad(E)'\}$ and, consequently, (vi) holds for $F'$. In addition, $\bigwedge\{(v'_2,\ldots,v'_n)\colon F'(v'_2,\ldots,v'_n)\in \Rad(E)'\}= (t^0_2,\ldots,t^0_n)$, so that $(t^0_2,\ldots,t^0_n)$ is a unique characteristic point of $F'$ and it exists iff $t_1\le t^0_1$ and $t_2>t^0_1$.

(2) Let $F'=\Delta_1(t_1^\sigma,t_2^\tau)F$. Using Lemmas \ref{lem:help1}, \ref{le:+,-}, \ref{le:IV}, we can follow basic ideas of part (1).

(i) If $F'=\Delta_1(t_1,t_2)F$, the unique characteristic point was described in (1).

(ii) Let $F'=\Delta_1(t_1,t_2^+)F$. Then we have $(u'_2,\ldots, u'_n)\in T_1(F')$ iff $F(t^+_2, u'_2,\ldots, u'_n)\in \Rad(E)'$ and $F(t_1,u'_2,\ldots, u'_n)
\in \Rad(E)$. Take a sequence of real numbers $(t^m)_m\searrow t_2$ and $t^m> t_2$ for each $m\ge 1$. Then $F(t^+_2,u'_2,\ldots, u'_n)\le F(t^m,u'_2,\ldots, u'_n)\in \Rad(E)'$ for each $m\ge 1$. Therefore, $t^m>t^0_1$, $(u'_2,\ldots, u'_n)\gg (t^0_2,\ldots,t^0_n)$, and $t_2=\lim_m t^m\ge t^0_1$. In addition, $F(t_1,u'_2,\ldots, u'_n)\in \Rad(E)$ entails $t_1\le t^0_1$. This implies that $(t^0_2,\ldots,t^0_n)$ is a characteristic point of $F'$ and it exists iff $t_1\le t^0_1$ and $t_2\ge t^0_1$.

In addition,
\begin{align*}
a_2&:= \bigwedge_{(u'_2,\ldots, u'_n)}F(t^+_2,u'_2,\ldots, u'_n)
=\bigwedge_{(u'_2,\ldots, u'_n)}\bigwedge_m F(t^m_2,u'_2,\ldots, u'_n)\in \Rad(E)',\\
a_1&= \bigwedge_{(u'_2,\ldots, u'_n)}F(t_1,u'_2,\ldots, u'_n)\in \Rad(E),\\
a&=a_2-a_1 = \bigwedge_{(u'_2,\ldots, u'_n)}F'(u'_2,\ldots, u'_n)\in \Rad(E)'.
\end{align*}

(iii) Let $F'=\Delta_1(t^+_1,t^+_2)F$. Then we conclude $(u'_2,\ldots, u'_n)\in T_1(F')$ iff $F(t^+_2,u'_2,\ldots, u'_n)\in \Rad(E)'$ and $F(t^+_1,u'_2,\ldots, u'_n)\in \Rad(E)$. The first condition entails by (ii) $t_2\ge t^0_1$, $(u'_2,\ldots, u'_n)\gg (t^0_2,\ldots,t^0_n)$. Let $(t^m)_m\searrow t_1$. The second condition gets  there is $m_0$ such that $F(t^+_1,u'_2,\ldots, u'_n)\le F(t^m,u'_2,\ldots, u'_n)\in \Rad(E)$ for each $m\ge m_0$. Hence, $t_m\le t^0_1$ for $m\ge m_0$, so that $t_1=\lim_m t_m\le t^0_1$ which entails $(t^0_2,\ldots,t^0_n)$ is a characteristic point.
Moreover,
\begin{align*}
a_2&:=
\bigwedge_{(u'_2,\ldots, u'_n)}\bigwedge_m F(t^m_2,u'_2,\ldots, u'_n)\in \Rad(E)',\\
a_1&:= \bigwedge_{(u'_2,\ldots, u'_n)}F(t_1,u'_2,\ldots, u'_n)= \bigwedge_{(u'_2,\ldots, u'_n)}\bigwedge_m F(t^m,u'_2,\ldots, u'_n) \in \Rad(E),\\
a&:=a_2-a_1 = \bigwedge_{(u'_2,\ldots, u'_n)}F'(u'_2,\ldots, u'_n)\in \Rad(E)'.
\end{align*}
In a similar way we proceed in cases (iv)--(viii), so that condition (vi) holds also in (2).

The last statement on $\Delta_i(t^0_i,t^{0+}_i)F$ follows from (2)(ii).
\end{proof}

\begin{remark}\label{re:Delta}
The latter result can be extended: Let $i_1<\cdots<i_k$ be integers from $\{1,\ldots,n\}$. If $F$ is an $n$-dimensional (pseudo) spectral resolution, then the left hand side of {\rm (\ref{eq:IV})} defines an $(n-k)$-dimensional pseudo spectral resolution. In particular, $\Delta_{i_1}(t^0_{i_1},t^{0+}_{i_1})\cdots \Delta_{i_k}(t^0_{i_k},t^{0+}_{i_k})F$ has a unique characteristic point $(t^0_{j_1},\ldots,t^0_{j_{n-k}})$, where $j_1<\cdots<j_{n-k}$ is the natural ordering of the set $\{j_1,\ldots,j_{n-k}\} = \{1,\ldots,n\}\setminus\{i_1,\ldots,i_k\}$, whenever $1\le k<n$.
\end{remark}

Suppose we have an $n$-dimensional spectral resolution $F$ on a lexicographic effect algebra $E$ with only finitely many characteristic points. We will construct an $n$-dimensional observable $x$ extending $F$. We will use the abbreviation for points in $\mathbb R^n$ in the form of vectors, so that $\bar t=(t_1,\ldots,t_n)$, $\bar s=(s_1,\ldots,s_n)$, and $\bar r = (r_1,\ldots,r_n)$, etc. In analogy with the case of perfect MV-algebras, lets chop $\mathbb{R}^n$ by hyperplanes $\{\bar{t}\in\mathbb{R}^n\colon t_i=s\}$ whenever $s$ is the $i$-th coordinate of some characteristic point. So we obtain a cover of $\mathbb{R}^n$ by a collection of cells $B_j$, $j\in J$, where each $B_j$ is of the form $\prod_{i=1}^n(t^j_i,s^j_i]$, where some $s^j_i$ may equals $\infty$, in which case we, of course, replace the bracket $]$ by $)$, and similarly some $t^j_i$ can be also $-\infty$.

Now, for each $B_j$ we define an $n$-dimensional pseudo spectral resolution $F_j$ which has its support on $B_j$. To avoid to many indexes, let $B=(\bar{t},\bar{s}]$ be one of the blocks $B_j$'s (we will leave the index $j$): Let us define

\begin{equation}\label{eq:F,B}
F_B(r_1,\ldots,r_n)= \left\{\begin{array}{ll}
\Delta_1(t_1^+,\min\{s_1,r_1\})\cdots \Delta_n(t_n^+,\min\{s_n,r_n\})F & \mbox{if} \ \bar{t}\leq \bar{r},\\
0 & \mbox{if}\ \bar{t}\nleq \bar{r}.
\end{array}
\right.
\end{equation}

Lemma~\ref{lem:help3} guarantees that, for $\bar{t}\leq \bar{r}$, the value $F_B(\bar{r})$ equals infimum from $V_{\bar{t_i}}^{\bar{r}}(F)$, where $\bar{t}\ll\bar{t}_i$ and $(\bar{t}_i)_i\searrow \bar{t}$.
Then $F_B$ is monotone.

The volume condition: Take real numbers $a_i\le b_i$ for $i=1,\ldots,n$. If $b_i<t^+_i$ for some $i$, then $\Delta_i(a_i,b_i)F_B=0$, so that $\Delta_1(a_1,b_1)\cdots \Delta_n(a_n,b_n)F_B= 0$. So let $t^+_i\le b_i$ for each $i=1,\ldots,n$. Then it is possible to show $\Delta_1(a_1,b_1)\cdots \Delta_n(a_n,b_n)F_B=
\Delta'_1(a_1,b_1)\cdots \Delta'_n(a_n,b_n)F$, where
$$
\Delta'_i(a_i,b_i)= \left\{\begin{array}{ll}
\Delta_i(t^+_i,\min\{s_i,b_i\}) & \mbox{if} \ a_i< t^+_i,\\
\Delta_i(\min\{s_i,a_i\},\min\{s_i,b_i\}) & \mbox{if}\ t^+_i\le a_i,
\end{array}
\right.
$$
which entails $\Delta_1(a_1,b_1)\cdots \Delta_n(a_n,b_n)F_B\ge 0$.

\section{ $n$-dimensional Spectral Resolutions on a Perfect MV-algebra}%7

If $n=1$, then $F$ can be extended to a one-dimensional observable due to \cite[Thm 4.8]{DDL}, and if $n=2$, it was proved in Theorem \ref{th:TwoMV}. The same is true also if $F$ is a pseudo spectral resolution, then we can find an observable on $[0,u^0_n]$, $n=1,2$, where $u^0_n=F(\infty,\ldots,\infty)$. We will suppose that every $i$-dimensional pseudo spectral resolution, where $1\le i\le n-1$, can be extended to an $i$-dimensional observable on $[0,u^i_0]$. Our aim is to show that then this is true also for $i=n$.

Let $\mathbf t_0=(t^0_1,\ldots,t^0_n)$ be a unique characteristic point of an $n$-dimensional spectral resolution $F$. We divide $\mathbb R^n$ by hyper-planes going through $\mathbf t_0$ whose normals are parallel with the main axes. We obtain $2^n$ blocks.
For any $n$-tuple $(i_1,\ldots,i_n)\in \{0,1\}^n$, we define $B_{i_1,\ldots,i_n}$ as follows
$B_{i_1,\ldots,i_n}=A_{i_1}\times \cdots \times A_{i_n}$, where
\begin{equation}\label{eq:Aij}
A_{i_j}= \left\{\begin{array}{ll}
(-\infty,t^0_{i_j}]  & \mbox{if}\ i_j=0,\\
(t^{0}_{i_j},\infty) & \mbox{if} \ i_j=1.
\end{array}
\right.
\end{equation}
Then we define new difference operators $ \Delta_j^{i_j}$, $i_j\in\{0,1\}$, $j=1,\ldots,n$, as

\begin{equation}\label{eq:Delta}
\Delta_j^{i_j}= \left\{\begin{array}{ll}
\Delta_j(-\infty,t_j^0)  & \mbox{if}\ i_j=0,\\
\Delta_j(t^{0+}_j,\infty) & \mbox{if} \ i_j=1.
\end{array}
\right.
\end{equation}
In addition, we set
$$
u_{i_1,\ldots,i_n}= u^n_{i_1,\ldots,i_n}=\Delta^{i_1}_1\cdots \Delta^{i_n}_nF.
$$
Due to Lemma \ref{le:obser}, $u_{i_1,\ldots,i_n}\in \Rad(M)$.
Let $F_{i_1,\ldots,i_n}$ be a function defined by (\ref{eq:F,B}) for $B=B_{i_1,\ldots,i_n}$, then $F_{i_1,\ldots,i_n}$ is an $n$-dimensional pseudo spectral resolution on $[0,u_{i_1,\ldots,i_n}]\subseteq \Rad(M)$.

Let $n_1<\cdots< n_k$ be any system of $k$ integers from $\{1,\ldots,n\}$, where $0<k<n$, $\{m_1,\ldots,m_{n-k}\}=\{1,\ldots,n\}\setminus \{n_1,\ldots,n_k\}$, and $m_1<\cdots<m_{n-k}$.
Let us determine a $k$-dimensional subspace going through the characteristic point $\mathbf t_0$ and determined by vectors parallel with axes $x_{n_1},\ldots,x_{n_k}$. For every fixed $n_1<\cdots<n_k$, we put
$$
F_{n_1,\ldots,n_k}(t_{n_1},\ldots,t_{n_k})= \Delta_{m_1}(t^0_{m_1},t^{0+}_{m_1}) \cdots \Delta_{m_{n-k}}(t^0_{m_{n-k}},t^{0+}_{m_{n-k}})F(t_1,\ldots,t_n), \quad (t_{n_1},\ldots,t_{n_k})\in \mathbb R^k.
$$

According to Lemma \ref{le:obser} and Remark \ref{re:Delta}, $F_{n_1,\ldots,n_k}$ is a $k$-dimensional pseudo spectral resolution. If we set
$$
u^n_{n_1,\ldots,n_k} =\bigvee_{t_{n_1},\ldots,t_{n_k}} F_{n_1,\ldots,n_k}(t_{n_1},\ldots,t_{n_k}),
$$
then
$$
u^n_{n_1,\ldots,n_k}= \widehat \Delta^n_1(n_1,\ldots,n_k)\cdots\widehat \Delta^n_n(n_1,\ldots,n_k)F,
$$
where
$$
\widehat \Delta^n_i(n_1,\ldots,n_k)= \left\{\begin{array}{ll}
\Delta_{n_j}(-\infty,\infty)  & \mbox{if}\ i= n_j \mbox{ for some } j=1,\ldots,k,\\
\Delta_{m_j}(t^{0}_{m_j},t^{0+}_{m_j}) & \mbox{if } i =m_j \mbox{ for some } j=1,\ldots,n-k,
\end{array}
\quad i=1,\ldots,n.
\right.
$$
Due to Lemma \ref{le:obser}, $F_{n_1,\ldots,n_k}$ is a $k$-dimensional spectral resolution with a unique characteristic point $(t^0_{n_1},\ldots,t^0_{n_k})$ on the interval MV-algebra $[0,u_{n_1,\ldots,n_k}]=\Gamma(\mathbb Z \lex G, (1,-g_0))$, where $(1,-g_0)=u_{n_1,\ldots,n_k}$, with $u_{n_1,\ldots,n_k}\in \Rad(M)'$ which is ``almost close" to be a perfect MV-algebra for all $n_1<\cdots<n_k$, where $1\le k \le n$.

The mapping $F_{i_1,\ldots,i_n}$, where $i_1,\ldots,i_n \in \{0,1\}$, is in fact an $n$-dimensional spectral resolution on the $\sigma$-complete interval MV-algebra $[0,u_{i_1,\ldots,i_n}]$ with $u_{i_1,\ldots,i_n}\in \Rad(M)$. Due to \cite[Thm 5.1]{DvLa3}, there is an $n$-dimensional observable $x_{i_1,\ldots,i_n}$ on $[0,u_{i_1,\ldots,i_n}]$ which is an extension of $F_{i_1,\ldots,i_n}$. Moreover, $x_{i_1,\ldots,i_n}(A)= x_{i_1,\ldots,i_n}(A\cap B_{i_1,\ldots,i_n})$, $A \in \mathcal B(\mathbb R^n)$, and $x_{i_1,\ldots,i_n}(\mathbb R^n)=u_{i_1,\ldots,i_n}$.
On the other hand, let $x_{n_1,\ldots,n_k}$ be a $k$-dimensional observable uniquely determined by the $k$-dimensional pseudo spectral resolution $F_{n_1,\ldots,n_k}$, $n_1<\cdots<n_k$, $k=1,\ldots,n-1$, on the interval algebra $[0,u_{n_1},\ldots,{n_k}]$, see the induction assumption from the beginning of the section. In addition, $x_{n_1,\ldots,n_k}(\mathbb R^k)=u^n_{n_1,\ldots,n_k}$.

It is possible to show that
\begin{equation}\label{eq:u+n}
x_{n_1,\ldots,n_k}(\{t^0_{n_1},\ldots,t^0_{n_k}\})=
\Delta_1(t_1^0,t_1^{0+})\cdots \Delta_n(t_n^0,t_n^{0+})F=:u^n_\emptyset
\end{equation}
for all $n_1,\ldots,n_k \in \{1,\ldots,n\}$. We notify $u^n_\emptyset \in \Rad(M)'$. We assert

\begin{equation}\label{eq:f3.1}
1= \sum_{i_1,\ldots,i_n\in \{0,1\}} u^n_{i_1,\ldots,i_n} +\sum_{k=1}^{n-1}(-1)^{n-k+1}\big(\sum_{n_1<\cdots<n_k\le n} u^n_{n_1,\ldots,n_k}\big) + (-1)^{n+1}u^n_\emptyset.
\end{equation}

We prove (\ref{eq:f3.1}). If $n=1$, the formula follows from \cite[Thm 4.8]{DDL}, and if $n=2$, it was proved in Theorem \ref{th:TwoMV}. We denote $\Delta_i(t_i^0):=\Delta_i(t_i^0,t_i^{0+})$, $i=1,\ldots,n$.
If we fix the last coordinate, we have the $n$-dimensional pseudo observable $F_t(t_1,\ldots,t_n)=F(t_1,\ldots,t_n,t)$, $(t_1,\ldots,t_n)\in \mathbb R^n$, so for it we have formula (\ref{eq:f3.1}) with $u_{i_1,\ldots,i_n}^{n,t}$ and $u^{n,t}_{n_1,\ldots,n_k}$. If we put $t=\infty$, then $F_\infty$ is also an $n$-dimensional spectral resolution.
Using the induction holding for $F_\infty$, we have
\begin{align*}
1&= \Delta_1(-\infty,\infty)\cdots \Delta_n(-\infty,\infty)\Delta_{n+1}(-\infty,\infty)F= \Delta_1(-\infty,\infty)\cdots \Delta_n(-\infty,\infty)F_\infty\\
&=\sum_{i_1,\ldots,i_n\in \{0,1\}} u^{n,\infty}_{i_1,\ldots,i_n} +\sum_{k=1}^{n-1}(-1)^{n-k+1}\big(\sum_{n_1<\cdots<n_k\le n} u^{n,\infty}_{n_1,\ldots,n_k}\big) + (-1)^{n+1}u^{n,\infty}_\emptyset\\
&=\big\{\sum_{i_1,\ldots,i_n}\Delta_1^{i_1}\cdots\Delta_n^{i_n} + \sum_{k=1}^{n-1}(-1)^{n-k+1}\big(\sum_{n_1<\cdots<n_k\le n}\widehat \Delta_{1}(n_1,\ldots,n_k)\cdots\widehat\Delta_{n}(n_1,\ldots,n_k)\big)\\
&\quad + (-1)^{n+1}\Delta_1(t_1^0)\cdots \Delta_n(t_n^0)\big\}F_\infty\\
&=\big\{\sum_{i_1,\ldots,i_n}\Delta_1^{i_1}\cdots\Delta_n^{i_n} + \sum_{k=1}^{n-1}(-1)^{n-k+1}\big(\sum_{n_1<\cdots<n_k\le n}\widehat \Delta_{1}(n_1,\ldots,n_k)\cdots\widehat\Delta_{n}(n_1,\ldots,n_k)\big)\\
&\quad + (-1)^{n+1}\Delta_1(t_1^0)\cdots \Delta_n(t_n^0)\big\}\Delta_{n+1}(-\infty,\infty)F\\
&= \big\{\sum_{i_1,\ldots,i_n}\Delta_1^{i_1}\cdots\Delta_n^{i_n}+ \sum_{k=1}^{n-1}(-1)^{n-k+1}\big(\sum_{n_1<\cdots<n_k\le n}\widehat \Delta_{1}(n_1,\ldots,n_k)\cdots\widehat\Delta_{n}(n_1,\ldots,n_k)\big)\\
&\quad + (-1)^{n+1}\Delta_1(t_1^0)\cdots \Delta_n(t_n^0)\big)
\big\}\big(\Delta_{n+1}(-\infty,t_{n+1}^0)+\Delta_{n+1}(t_{n+1}^0,t_{n+1}^{0+}) + \Delta_{n+1}(t_{n+1}^{0+},\infty)\big)F\\
&= \big\{\sum_{i_1,\ldots,i_n}\Delta_1^{i_1}\cdots\Delta_n^{i_n} + \sum_{k=1}^{n-1}(-1)^{n-k+1}\big(\sum_{n_1<\cdots<n_k\le n}\widehat \Delta_{1}(n_1,\ldots,n_k)\cdots\widehat\Delta_{n}(n_1,\ldots,n_k)\big)\\
&\quad + (-1)^{n+1}\Delta_1(t_1^0)\cdots \Delta_n(t_n^0)\big)\big\}
(\Delta_{n+1}^0 + \Delta_{n+1}(t_{n+1}^0) +\Delta_{n+1}^1)F\\
&=  \big\{\sum_{i_1,\ldots,i_n,i_{n+1}}\Delta_1^{i_1}\cdots\Delta_n^{i_n} \Delta_{n+1}^{i_{n+1}} + \sum_{i_1,\ldots,i_n}\Delta_1^{i_1}\cdots\Delta_n^{i_n} \Delta_{n+1}(t_{n+1}^0)\\
&\quad + \sum_{k=1}^{n-1}(-1)^{n-k+1}\big(\sum_{n_1<\cdots<n_k\le n}\widehat \Delta_{1}(n_1,\ldots,n_k)\cdots\widehat\Delta_{n}(n_1,\ldots,n_k) \Delta_{n+1}(-\infty,\infty)\big)\\ &\quad +(-1)^{n+1}\Delta_1(t_1^0)\cdots \Delta_n(t_n^0) \Delta_{n+1}(-\infty,\infty) \big\}F\\
&= (\text{A})+(\text{B})+(\text{C}) + (\text{D}),
\end{align*}
where
\begin{eqnarray*}
(\text{A})&=\sum_{i_1,\ldots,i_n,i_{n+1}}\Delta_1^{i_1}\cdots\Delta_n^{i_n} \Delta_{n+1}^{i_{n+1}}F,
\end{eqnarray*}
\begin{eqnarray*}
(\text{B})&=\sum_{i_1,\ldots,i_n}\Delta_1^{i_1}\cdots\Delta_n^{i_n} \Delta_{n+1}(t_{n+1}^0)F\\
%&\ \vdots\\
&= \big(\Delta_1(-\infty,\infty)-\Delta_1(t_1^0)\big)\cdots \big(\Delta_n(-\infty,\infty)-\Delta_n(t_n^0)\big)
\Delta_{n+1}(t_{n+1}^0)F\\
&=  \sum_{k=1}^{n}(-1)^{n-k} \sum_{n_1<\cdots<n_k\le n}  u^{n+1}_{n_1,\ldots,n_k} + (-1)^{n+2} u^{n+1}_\emptyset,
\end{eqnarray*}
\begin{eqnarray*}
(\text{C})&= \sum_{k=1}^{n-1}(-1)^{n-k+1}\big(\sum_{n_1<\cdots<n_k\le n}\widehat \Delta_{1}(n_1,\ldots,n_k)\cdots\widehat\Delta_{n}(n_1,\ldots,n_k) \Delta_{n+1}(-\infty,\infty)F\big)\\ &\quad
= \sum_{k=1}^{n-1}(-1)^{n-k+1}\sum_{n_1<\cdots<n_k\le n} u^{n+1}_{n_1,\ldots,n_k,n+1},
\end{eqnarray*}
and
\begin{eqnarray*}
(\text{D})&= (-1)^{n+1}\Delta_1(t_1^0)\cdots \Delta_n(t_n^0) \Delta_{n+1}(-\infty,\infty)F = (-1)^{n+1}u^{n+1}_{n+1}.
\end{eqnarray*}
Then
\begin{align*}
1=(\text{A})&+(\text{B})+(\text{C}) + (\text{D})\\
&= \sum_{i_1,\ldots,i_{n+1}}u^{n+1}_{i_1,\ldots,i_{n+1}} +
\sum_{k=1}^n (-1)^{n-k+2}\big(\sum_{i_1,\ldots,i_k\le n+1}u^{n+1}_{n_1,\ldots,n_k} \big) +(-1)^{n+2}u^{n+1}_\emptyset.
\end{align*}

We define a mapping $x:\mathcal B(\mathbb R^n) \to M$ corresponding to $F$, as follows

\begin{equation}\label{eq:formula}
\begin{split}
x(A)=&\sum_{i_1,\ldots,i_n\in \{0,1\}} x_{i_1,\ldots,i_n}(A) +\sum_{k=1}^{n-1}(-1)^{n-k+1}\big(\sum_{n_1<\cdots<n_k\le n}x_{n_1,\ldots,n_k}
(\pi^n_{n_1,\ldots,n_k}(A))\big)\\
 &+ (-1)^{n+1}u^n_\emptyset \chi_{(t^0_1,\ldots,t^0_n)}(A), \quad A \in \mathcal B(\mathbb R^n),
\end{split}
\end{equation}
where $\pi^n_{n_1,\ldots,n_k}$ is the projection from $\mathbb R^n$ onto $\mathbb R^k$ given by $\pi^n_{n_1,\ldots,n_k}(t_1,\ldots,t_n)= (t_{n_1},\ldots,t_{n_k})$ for each $(t_1,\ldots,t_n)\in \mathbb R^n$.  We note that the latter expression generalizes (\ref{eq:obser2}) for $n=2$ and $n=1$ from \cite[Thm 4.8]{DDL}.

We assert that $x$ can be expressed as the sum of positive elements of $M$ as follows.

\begin{lemma}\label{le:positive}
The function $x$ defined by {\rm (\ref{eq:formula})} takes positive values on each $A \in \mathcal B(\mathbb R^n)$.
\end{lemma}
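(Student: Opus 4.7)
The strategy is to show $x(A) \ge 0$ by exhibiting an equivalent expression for (\ref{eq:formula}) as a sum of manifestly nonnegative observable masses, generalizing the form (\ref{eq:obser'}) used for $n=2$. The tool is a stratification of $\mathbb R^n$ by the characteristic point, followed by an inclusion--exclusion that absorbs the double-counting caused by the unrestricted projections $\pi^n_{n_1,\ldots,n_k}(A)$ in (\ref{eq:formula}).

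For each $S \subseteq \{1,\ldots,n\}$, set $\Sigma_S := \{\mathbf t \in \mathbb R^n : t_j = t^0_j \Leftrightarrow j \in S\}$, so that $\mathbb R^n = \bigsqcup_S \Sigma_S$. Since each $x_{i_1,\ldots,i_n}$ and each $x_{n_1,\ldots,n_k}$ is $\sigma$-additive on Borel sets, so is $x$, and hence $x(A) = \sum_S x(A \cap \Sigma_S)$. I would proceed by induction on $n$, the base cases $n=1,2$ being delivered by \cite[Thm~4.8]{DDL} and Theorem~\ref{th:TwoMV}, whose stratified forms are already given by (\ref{eq:obser'}). For each stratum $\Sigma_S$, I would evaluate (\ref{eq:formula}) at $A \cap \Sigma_S$: the block contribution $x_{\mathbf i}(A \cap \Sigma_S)$ vanishes unless $i_j = 0$ for every $j \in S$, and the projection contribution $x_T(\pi^n_T(A \cap \Sigma_S))$ depends on $T$ only through $T \cap S$, because, by Lemma~\ref{le:obser} and Remark~\ref{re:Delta}, the pseudo spectral resolution $F_T$ has characteristic point $(t^0_j)_{j \in T}$, so projections of points of $\Sigma_S$ land in the $T$-stratum indexed by $T \cap S$.

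Applying the induction hypothesis to each $x_T$ (which is a $|T|$-dimensional observable with $|T|<n$) further decomposes $x_T(\pi^n_T(A \cap \Sigma_S))$ into a stratified sum on $\mathbb R^T$, and the alternating signs $(-1)^{n-|T|+1}$ in (\ref{eq:formula}) then telescope by exactly the same binomial-type cancellation that drove the proof of the identity (\ref{eq:f3.1}) for the special case $A = \mathbb R^n$. Running that cancellation now with $A$ replaced by $A \cap \Sigma_S$ yields, per stratum, a single nonnegative observable mass: a block observable on an open cell, a lower-dimensional observable applied to a projected stratum, or at worst the char-point mass $u^n_\emptyset$. Each of these lies in $M^+$ because $x_{\mathbf i}$ takes values in $\Rad(M)$, the $x_{S^c}$ are nonnegative by induction, and $u^n_\emptyset \in \Rad(M)'$; summing over $S$ yields $x(A) \ge 0$.

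The principal obstacle is the combinatorial bookkeeping of this stratified inclusion--exclusion: verifying that, for every $A \in \mathcal B(\mathbb R^n)$ and every $S$, the mix of block contributions, alternating projection contributions, and char-point contribution collapses to the intended single nonnegative summand on $\Sigma_S$. This reduces to executing the derivation of (\ref{eq:f3.1}) stratum by stratum rather than only for $A=\mathbb R^n$; once that book-keeping is carried through, the nonnegativity of $x(A)$ is immediate from the resulting sum-of-positives expression, and the lemma follows.
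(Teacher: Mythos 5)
Your overall strategy --- re-expressing (\ref{eq:formula}) as a sum of manifestly nonnegative observable masses by an inclusion--exclusion over the hyperplane arrangement through the characteristic point --- is exactly the idea behind the paper's proof, so you have identified the right mechanism. But as written the argument has a genuine gap: the decisive step, namely that on each stratum $\Sigma_S$ the block contributions, the alternating projection contributions and the char-point contribution collapse to a nonnegative quantity, \emph{is} the content of the lemma, and you defer it entirely (``once that book-keeping is carried through\dots''). Moreover, two of the intermediate assertions you would need are not correct as stated. First, $x_T(\pi^n_T(A\cap\Sigma_S))$ does not depend on $T$ only through $T\cap S$; what is true is merely that $\pi^n_T(A\cap\Sigma_S)$ lands in the stratum of $\mathbb R^T$ indexed by $T\cap S$, while the value still depends on all of $T$ and on $A$. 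Second, the outcome on a stratum is not ``a single nonnegative observable mass'': on $\Sigma_\emptyset$ all $2^n$ block terms survive and one must show that the whole alternating sum $\sum_T(-1)^{n-|T|+1}x_T(\pi^n_T(A\cap\Sigma_\emptyset))$ cancels to zero. That cancellation needs a compatibility identity expressing $x_T(\pi^n_T(\,\cdot\,))$ restricted to a sub-hyperplane in terms of $x_{T'}$ for $T'\subsetneq T$, which you neither state nor prove; without it the induction on $n$ has nothing to bite on.

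The paper's proof supplies exactly that missing computation, and does so without the $2^n$-stratification or the induction. It sets $H_i$ to be the hyperplane through the characteristic point orthogonal to the $x_i$-axis and writes the explicit nonnegative expression (\ref{eq:formula1}), in which each $(n-1)$-dimensional observable is applied to $\pi^n_{1,\ldots,i-1,i+1,\ldots,n}\big(A\cap(H_i\setminus\bigcup_{j<i}H_j)\big)$ --- a ``first hyperplane hit'' decomposition of $\bigcup_iH_i$ rather than a full stratification. The identity with (\ref{eq:formula}) is then a one-step inclusion--exclusion: since $B\mapsto x^i(B):=x_{1,\ldots,i-1,i+1,\ldots,n}(\pi^n_{1,\ldots,i-1,i+1,\ldots,n}(A\cap B))$ is additive and subtractive, $x^i(H_i\setminus\bigcup_{j<i}H_j)$ expands as $x^i(H_i)+\sum_{k\ge1}(-1)^k\sum_{n_1<\cdots<n_k<i}x^i\big(H_i\cap\bigcap_{l\le k}H_{n_l}\big)$; each intersection term is identified with $x_{m_1,\ldots,m_{n-k-1}}(\pi^n_{m_1,\ldots,m_{n-k-1}}(A))$, or with $\chi_{(t^0_1,\ldots,t^0_n)}(A)\,u^n_\emptyset$ when $k=n-1$; and each index set $T$ arises from exactly one pair $(i,\{n_1,\ldots,n_k\})$, the one with $i=\max(\{1,\ldots,n\}\setminus T)$, which yields precisely the coefficient $(-1)^{n-|T|+1}$ of (\ref{eq:formula}). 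To complete your proof you would have to carry out an equivalent of this calculation; I would recommend switching to the paper's coarser decomposition, which turns the bookkeeping into a finite explicit verification instead of an induction whose hypothesis must carry the entire stratified identity.
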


\begin{proof}
Define $H_i$, $i=1,\ldots,n$, to be the hyperplane passing through the characteristic point and orthogonal to the $x_i$-th axis.

\begin{equation}\label{eq:formula1}
\begin{split}
x(A)=\sum_{i_1,\ldots,i_n\in \{0,1\}}x_{i_1,\ldots,i_n}(A)&+x_{2,\ldots,n}\big((\pi^n_{2,\ldots,n}(A\cap H_1))\big)
+x_{1,3,\ldots,n}\big(\pi^n_{1,3,\ldots,n}(A\cap (H_2\setminus H_1))\big)\\
&+\cdots +x_{1,2,\ldots,n-1}\big(\pi^n_{1,\ldots,n-1}(A\cap (H_n\setminus(\bigcup_{j<n}H_j)))\big),\quad A \in \mathcal B(\mathbb R^n).
\end{split}
\end{equation}
Let $A\in \mathcal B(\mathbb R^n)$ be fixed. Denote by $x^i(B):=x_{1,\ldots,i-1,i+1,\ldots,n}(\pi^n_{1,\ldots,i-1,i+1,\ldots,n}(A\cap B))$, $B \in \mathcal B(\mathbb R^n)$. Then $x^i$ is additive and subtractive on $M$ i.e. $B_1\subseteq B_2$ yields $x^i(B_2\setminus B_1)= x^i(B_2)- x^i(B_1)$. Hence,
we have
$$
x^i(H_i \setminus \bigcup_{j<i}H_j)= x^i(H_i\cap H_1^c\cap \cdots\cap H_{i-1}^c)= x^i(H_i)+ \sum_{k=1}^{i-1} (-1)^k\sum_{n_1<\cdots<n_k<i} x^i(H_i \cap\bigcap_{l\le k} H_{n_l}),
$$
where $^c$ denotes the set complement. Let $\{m_1,\ldots,m_{n-k-1}\} = \{1,\ldots,n\}\setminus \{i,n_1,\ldots,n_k\}$ and $m_1<\cdots<m_{n-k-i}$.
Then
$$
x^i(H_i)=x_{1,\ldots,i-1,i+1,\ldots,n}(\pi^n_{1,\ldots,i-1,i+1,\ldots,n}(A))
$$
and
$$
x^i(H_i \cap\bigcap_{l\le k} H_{n_l})= \left\{\begin{array}{ll}
x_{m_1,\ldots,m_{n-k-1}}(\pi^n_{m_1,\ldots,m_{n-k-1}}(A)) & \mbox{if}\ k<n-1,\\
\chi_{t_1^0,\ldots,t_n^0}(A)u^n_\emptyset & \mbox{if}\ k=n-1.
\end{array}
\right.
$$
Therefore, both right hand sides of formulas (\ref{eq:formula}) and (\ref{eq:formula1}) for $x(A)$ are identical and $x(A)\ge 0$.
\end{proof}

Now, we see that the mapping $x$ takes values in $M$, it is additive, monotone, and $x(\mathbb R^n)= 1$ due to (\ref{eq:f3.1}). In addition, similarly as for $n=2$, we can show that $\big(x(A_i)\big)_i\nearrow x(A)$ whenever $(A_i)_i\nearrow A$. In other words, $x$ is an $n$-dimensional observable on $M$.

In what follows, we show that $x$ is an extension of $F$, that is,
$x((-\infty,t_1)\times\cdots \times (-\infty,t_n))=F(t_1,\ldots,t_n)$, $t_1,\ldots,t_n\in \mathbb R$.

\begin{lemma}\label{le:F(x)}
If $x$ is determined by {\rm (\ref{eq:formula})}, then $x$ is an extension of $F$.
\end{lemma}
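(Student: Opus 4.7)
The plan is to evaluate $x(A)$ on $A = (-\infty, t_1) \times \cdots \times (-\infty, t_n)$ using formula (\ref{eq:formula}) and match it termwise against a decomposition of $F(t_1, \ldots, t_n)$. Since each $x_{i_1, \ldots, i_n}$ extends $F_{i_1, \ldots, i_n}$ by \cite[Thm 5.1]{DvLa3} and each $x_{n_1, \ldots, n_k}$ extends $F_{n_1, \ldots, n_k}$ by the inductive hypothesis, and since $\pi^n_{n_1, \ldots, n_k}(A) = (-\infty, t_{n_1}) \times \cdots \times (-\infty, t_{n_k})$, the goal reduces to proving the identity
\begin{align*}
F(t_1, \ldots, t_n) = & \sum_{i_1, \ldots, i_n \in \{0,1\}} F_{i_1, \ldots, i_n}(t_1, \ldots, t_n) \\
& + \sum_{k=1}^{n-1}(-1)^{n-k+1} \sum_{n_1 < \cdots < n_k \le n} F_{n_1, \ldots, n_k}(t_{n_1}, \ldots, t_{n_k}) + (-1)^{n+1} u^n_\emptyset \chi_{(t^0_1, \ldots, t^0_n)}(A).
\end{align*}

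To verify it, I would write $F(t_1, \ldots, t_n) = \Delta_1(-\infty, t_1) \cdots \Delta_n(-\infty, t_n) F$ and decompose each factor at the characteristic coordinate $t^0_i$ using (\ref{eq:III}): namely $\Delta_i(-\infty, t_i) = \alpha_i + \beta_i + \gamma_i$, where $\alpha_i = \Delta_i(-\infty, \min\{t_i, t^0_i\})$, $\beta_i = \Delta_i(t^0_i, t^{0+}_i)$ when $t_i > t^0_i$ (and $0$ otherwise), and $\gamma_i = \Delta_i(t^{0+}_i, t_i)$ when $t_i > t^0_i$ (and $0$ otherwise). Expanding the product yields $F(t_1, \ldots, t_n) = \sum_{(I_0, I_\beta, I_1)} T_{I_0, I_\beta, I_1}$, where the sum runs over ordered partitions of $\{1, \ldots, n\}$ and $T_{I_0, I_\beta, I_1} = \prod_{i \in I_0} \alpha_i \prod_{i \in I_\beta} \beta_i \prod_{i \in I_1} \gamma_i \cdot F$. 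A direct inspection of (\ref{eq:F,B}) then gives $F_{i_1, \ldots, i_n}(t_1, \ldots, t_n) = \prod_{k : i_k = 0} \alpha_k \prod_{k : i_k = 1} \gamma_k \cdot F$, and the defining formulas for $F_{n_1, \ldots, n_k}$ together with (\ref{eq:u+n}) express the remaining summands as analogous products involving $\tilde\beta_i := \Delta_i(t^0_i, t^{0+}_i)$, with the convention that $\tilde\beta_i$ coincides with $\beta_i$ exactly on those indices $i$ with $t_i > t^0_i$.

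The main obstacle is the inclusion--exclusion bookkeeping: for each partition $(I_0, I_\beta, I_1)$ with $|I_\beta| = m$, one must verify that the total coefficient of $T_{I_0, I_\beta, I_1}$ on the right-hand side equals one. Collecting the contributions from those $k$-subsets $S \supseteq I_0 \cup I_1$ with sign $(-1)^{n-k+1}$ reduces this to the binomial identity $\sum_{\ell=0}^{m-1}(-1)^\ell \binom{m}{\ell} = -(-1)^m$ for $m \ge 1$, and the boundary case $m = n$ is closed up by the extra term $(-1)^{n+1} u^n_\emptyset \chi_{(t^0_1, \ldots, t^0_n)}(A)$. The cancellations that occur when some $t_i \le t^0_i$ (so that $\beta_i = 0$ and $\gamma_i = 0$, yet $\tilde\beta_i \neq 0$) are the most delicate to track, and they must be handled case by case according to the set $\{i : t_i \le t^0_i\}$. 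Once the bookkeeping is complete, the computation is the finite-$t_i$ analogue of the derivation of (\ref{eq:f3.1}) carried out immediately above the definition of $x$, and proceeds by the same telescoping as the inductive step used there.
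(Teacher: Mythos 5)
Your strategy --- expand each $\Delta_i(-\infty,t_i)$ into $\alpha_i+\beta_i+\gamma_i$ and match coefficients by inclusion--exclusion --- is genuinely different from the paper's proof, which instead argues block by block (explicitly for $B_{1,0,\ldots,0}$ and $B_{1,1,0,\ldots,0}$) and then treats the remaining blocks inductively by splitting $A=A_0\cup A_1$ along the hyperplane $x_n=t^0_n$ and using additivity of $x$. For the terms of your expansion with $I_\beta\subseteq P:=\{i\colon t_i>t^0_i\}$ your binomial identity does produce total coefficient $1$, with the extreme case $I_\beta=\{1,\ldots,n\}$ closed by the $\chi$-term, exactly as you indicate. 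The problem is the step you defer. The ``delicate cancellations'' for $N:=\{i\colon t_i\le t^0_i\}\ne\emptyset$ do \emph{not} occur under a literal reading of \eqref{eq:formula}: the summand $x_{n_1,\ldots,n_k}(\pi^n_{n_1,\ldots,n_k}(A))=F_{n_1,\ldots,n_k}(t_{n_1},\ldots,t_{n_k})$ applies $\tilde\beta_{m_j}=\Delta_{m_j}(t^0_{m_j},t^{0+}_{m_j})$ to \emph{every} complementary index $m_j$, including those in $N$, and these factors do not vanish. Carrying out your own bookkeeping, a generalized term with $\emptyset\ne I_\beta\subseteq N$ is produced by exactly one subspace (the one with $M=I_\beta$) and therefore survives with coefficient $(-1)^{|I_\beta|+1}\ne 0$. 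Concretely, for $n=2$, $t_1>t^0_1$ and $t_2\le t^0_2$, the right-hand side of your target identity sums to $F(t_1,t_2)+\big(F(t_1,t^{0+}_2)-F(t_1,t^0_2)\big)$, and the extra bracket lies in $\Rad(M)'$, so the discrepancy is not even small.

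The repair is to read the correction terms as in \eqref{eq:formula1}: $x_{n_1,\ldots,n_k}$ must be evaluated on the projection of $A$ \emph{intersected with} the affine subspace through the characteristic point, so that the term vanishes whenever $A$ misses that subspace, i.e.\ whenever $\{m_1,\ldots,m_{n-k}\}\cap N\ne\emptyset$. This is what the paper's own computations silently assume --- in its $B_{1,1,0,\ldots,0}$ calculation only the subspaces orthogonal to coordinates in $\{1,2\}=P$ appear among the corrections, and all others are dropped without comment. Once the sum is restricted in this way, every surviving $I_\beta$ is contained in $P$, your inclusion--exclusion closes with the stated binomial identity, and the argument goes through; but as written, with \eqref{eq:formula} taken at face value, the identity you set out to prove is false whenever some $t_i\le t^0_i$, so the case analysis you postpone is not bookkeeping --- it is the point where the proof must be changed.
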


\begin{proof}
Assume that $x$ is defined by (\ref{eq:formula}). We show that $x$ is an extension of $F$. Due to \cite[Thm 4.8]{DDL} and Theorem \ref{th:TwoMV}, the statement is true for $n=1$ and $n=2$. So let $n>2$.

If $(k_1,\ldots,k_n)$ is any permutation of $(1,\ldots,n)$, then $F'(t_1,\ldots,t_n):=F(t_{k_1},\ldots,t_{k_n})$, $t_1,\ldots,t_n \in \mathbb R$, is an $n$-dimensional spectral resolution whose characteristic point is $(t^0_{k_1},\ldots,t^0_{k_n})$.

It is evident that if $(t_1,\ldots,t_n)\in B_{0,\ldots,0}$, then $x((-\infty,t_1)\times \cdots\times (-\infty,t_n))=F(t_1,\ldots,t_n)$.

Now, let $i_1,\ldots,i_n\in \{0,1\}$. Assume that some $i_j=0$ and $i_k=1$. Let $(j_1,\ldots,j_n)$ be a permutation of $\{1,\ldots,n\}$ such that $i_{j_1}=\cdots= i_{j_k}=1$ and $i_{j_{k+1}}=\cdots=i_{i_{j_n}}=0$.
If $(t_1,\ldots,t_n)\in B_{i_1,\ldots,i_n}$, then due to (\ref{eq:Aij}), we have $(t_{j_1},\ldots,t_{j_n})\in B_{i_{j_1},\ldots,i_{j_n}}$. Therefore, if $(t_{j_1},\ldots,t_{j_n})\in B_{i_{j_1},\ldots,i_{j_n}}$ implies $F'(t_{j_1},\ldots,t_{j_n})=x((-\infty,t_{j_1})\times\cdots\times (-\infty,t_{j_n}))$, then $F(t_1,\ldots,t_n)=x((-\infty,t_1)\times \cdots\times (-\infty,t_n))$.

Let $B=(-\infty,t_1)\times \cdots\times (-\infty,t_{n})$. Assume $(t_1,\ldots,t_n)\in B_{1,0,\ldots,0}$. Then
\begin{align*}
x(B)&= F_{0,\ldots,0}(t^0_1,t_2,\ldots,t_n)+F_{1,0,\ldots,0}(t_1,\ldots,t_n)+ F_{2,\ldots,n}(t_2,\ldots,t_n)\\
&= \Delta_1(-\infty,t^0_1)\Delta_2(-\infty,t_2)\cdots \Delta_n(-\infty,t_n)F + \Delta_1(t_1^{0+},t_1)\Delta_2(-\infty,t_2)\cdots\Delta_n(-\infty,t_n)F\\
&\quad + \Delta_1(t^0_1,t^{0+}_1)\Delta_2(-\infty,t_2)\cdots \Delta_n(-\infty,t_n)F\\
&= \Delta_1(-\infty,t_1)\cdots\Delta_n(-\infty,t_n)F=F(t_1,\ldots,t_n).
\end{align*}

Due to the previous paragraph, if $(t_1,\ldots,t_n)\in B_{i_1,\ldots,i_n}$, where $i_k=1$ and $i_j=0$ for $j\ne k$, then $x(B)= F(t_1,\ldots,t_n)$; this holds for each $k=1,\ldots,n$.

Take $(t_1,\ldots,t_n) \in B_{1,1,0,\ldots,0}$ and denote $\Delta^3 = \Delta_3(-\infty,t_3)\cdots\Delta_n(-\infty,t_n)$. Then
\begin{align*}
x(B)&= F_{0,0,0,\ldots,0}(t^0_1,t^0_2,t_3,\ldots,t_n)
\quad\quad  (=)\ \Delta_1(-\infty,t^0_1)\Delta_2(-\infty, t^0_2)\Delta^3F\\
 &+ F_{0,1,0,\ldots,0}(t^0_1,t_2,t_3,\ldots,t_n) \quad\quad  (=)\ \Delta_1(-\infty,t^0_1)\Delta_2(t^{0+}_2,t_2)\Delta^3F\\
&+ F_{1,0,0,\ldots,0}(t_1,t^0_2,t_3,\ldots,t_n) \quad\quad (=)\
\Delta_1(t^{0+}_1,t_1)\Delta_2(-\infty, t^0_2)\Delta^3 F\\
&+F_{1,1,0,\ldots,0}(t_1,t_2,t_3,\ldots,t_n) \quad\quad (=)\
 \Delta_1(t^{0+}_1,t_1)\Delta_2(t^{0+}_2,t_2)\Delta^3F\\
&+ F_{1,3,4,\ldots,n}(t_1,t_3,t_4,\ldots,t_n) \quad\quad (=)\ \Delta_1(-\infty,t_1)\Delta_2(t^0_2,t^{0+}_2)\Delta^3F\\
&+ F_{2,3,4,\ldots,n}(t_2,t_3,t_4,\ldots,t_n) \quad\quad (=)\
 \Delta_1(t^0_1,t^{0+}_1)\Delta_2(-\infty,t_2)\Delta^3F\\
&-F_{3,4,\ldots,n}(t_3,t_4,\ldots,t_n) \hspace{6mm}\quad\quad (=)\
- \Delta_1(t^0_1,t^{0+}_1)\Delta_2(t^0_2,t^{0+}_2)\Delta^3 F\\ &=\Delta_1(-\infty,t_1)\Delta_2(-\infty,t_2)\Delta^3F
= F(t_1,\ldots,t_n).
\end{align*}

Finally, let $A=(-\infty,t_1)\times\cdots\times(-\infty,t_{n})$.
Due to the last paragraphs, we can assume that if $(t_1,\ldots,t_{n})\in B_{i_1,\ldots,i_k,0,\ldots,0}$, then $x(A)=F(t_1,\ldots,t_{n})$, where $1\le k< n$. Without loss of generality and for simplicity, we assume that $k=n-1$.
Whence, if $(t_1,\ldots,t_{n})\in B_{1,\ldots,1,0}$, then $x(A)= F(t_1,\ldots,t_{n})$.

Now, take $(t_1,\ldots,t_{n})\in B_{1,\ldots,1}$.
Thus, let $t_{i}>t^0_{i}$ for each $i=1,\ldots,n$. Express $A=(-\infty,t_1)\times \cdots\times (-\infty,t_{n})$ in the form $A=A_0\cup A_1$, where $A_0=(-\infty,t_1)\times \cdots\times (-\infty,t_{n-1})\times(-\infty,t^0_{n}]$ and $A_1=(-\infty,t_1)\times \cdots\times (-\infty,t_{n-1})\times(t^0_{n},t_{n})$.
We have $x(A_0)=x((-\infty,t_1)\times \cdots\times (-\infty,t_{n-1})\times(-\infty,t^0_{n}))+ x((-\infty,t_1)\times \cdots\times (-\infty,t_{n-1})\times \{t^0_{n}\})$. Due to the assumption, we have
$$
x((-\infty,t_1)\times \cdots\times (-\infty,t_{n-1})\times(-\infty,t^0_{n})) =F(t_1,\ldots,t_{n-1},t^0_{n})
$$
and
\begin{align*}
x((-\infty,t_1)\times \cdots\times (-\infty,t_{n-1})\times \{t^0_{n}\})&=F_{1,\ldots,{n-1}}(t_1,\ldots,t_{n-1})\\
&=\Delta_1(-\infty,t_1)\cdots \Delta_{n-1}(-\infty,t_{n-1})\Delta_{n}(t^0_{n},t^{0+}_{n})F,
\end{align*}
so that
$$
x(A_0)=F(t_1,\ldots,t_{n-1},t^0_{n})+F_{1,\ldots,{n-1}}
(t_1,\ldots,t_{n-1})= F(t_1,\ldots,t_{n-1},t^{0+}_{n}).
$$

To calculate $x(A_1)$, we use (\ref{eq:Delta}) and (\ref{eq:formula}): Let $1\le n_1<\cdots<n_k< n$ and $m_1<\cdots<m_{n-k}$, where $\{m_1,\ldots,m_{n-1-k}\}=\{1,\ldots,n-1\}\setminus \{n_1,\ldots,n_k\}$, we get

\begin{align*}
x(A_1)=& \sum_{i_1,\ldots,i_{n-1}\in\{0,1\}}x_{i_1,\ldots,i_{n-1},0}((-\infty, t_1)\times \cdots \times (-\infty,t_{n-1})\times (t^0_{n},t_{n}))\\
 &\quad +
\sum_{k=0}^{n-2}(-1)^{n-k}\big(\sum_{n_1<\cdots<n_k<n} x_{n_1,\ldots,n_k,n}((-\infty,t_{n_1})\times \cdots \times (-\infty, t_{n_k})\times (t^0_{n},t_{n})\big)\\
=& \sum_{i_1,\ldots,i_{n-1}\in\{0,1\}}\Delta^{i_1}_1\cdots \Delta^{i_{n-1}}_{n-1}\Delta_{n}(t^{0+}_{n},t_{n})F\\
&\quad +
\sum_{k=0}^{n-2}(-1)^{n-k}\big(\sum_{n_1<\cdots<n_k<n}
\Delta_{m_1}(t^0_{m_1},t^{0+}_{m_1}) \cdots \Delta_{m_{n-k}}(t^0_{m_{n-1-k}},t^{0+}_{m_{n-1-k}}) \Delta_{n}(t^{0+}_{n},t_{n})F\big)\\
=& \Big(\sum_{i_1,\ldots,i_{n-1}\in\{0,1\}}\Delta^{i_1}_1\cdots
\Delta^{i_{n-1}}_{n-1}
 +\sum_{k=1}^{n-2}(-1)^{n-k}\\
& \quad \big(\sum_{n_1<\cdots<n_k<n} \Delta_{m_1}(t^0_{m_1},t^{0+}_{m_1}) \cdots \Delta_{m_{n-k}}(t^0_{m_{n-1-k}},t^{0+}_{m_{n-1-k}})\big)\Big) \Delta_{n}(t^{0+}_{n},t_{n})F\\
=&\Delta_1(-\infty,t_1)\cdots\Delta_{n-1}(-\infty,t_{n-1}) \Delta_{n}(t^{0+}_{n},t_{n})F= F(t_1,\ldots,t_{n})-F(t_1,\ldots,t_{n-1},t^{0+}_{n}).
\end{align*}
Whence, $x(A)= x(A_0)+x(A_1)=F(t_1,\ldots,t_{n-1},t_{n})$.
\end{proof}

In the following result, we define an $n$-dimensional observable on $M$ which is an extension of $F$ in another way as in formula (\ref{eq:formula}).

On $\Delta_i$'s operators we define addition $+$ usually as $\big(\Delta_i(t,t')+\Delta_i(s,s')\big)F=\Delta_i(t,t')F+\Delta_i(s,s')F$.

\begin{lemma}\label{eq:Fg} Let $F$ be an $n$-dimensional spectral resolution on $E=\Gamma_{ea}(\mathbb Z \lex G,(1,0))$, where $G$ is a directed Dedekind monotone $\sigma$-complete po-group with interpolation.
For each $g\in\{-1,0,1\}^{\{1,\ldots,n\}}$, we define
\begin{align*}F_g(t_1,\ldots,t_n)=&\prod_{i\in g^{-1}(-1)}\Delta_i(-\infty,\min\{t^0_i,t_i\})\prod_{i\in g^{-1}(0)}\Delta_i(\min\{{t^0_i},t_i\},\min\{{t^0_i}^+,t_i\})\\
&\prod_{i\in g^{-1}(1)}\Delta_i(\min\{{t^0_i}^+,t_i\},t_i)F,
\end{align*}
where $(t^0_1,\ldots,t^0_n)$ is the characteristic point of $F$.
Then the mapping $F_g$ is an $n$-dimensional pseudo spectral resolution. Unless the case $g$ equals constantly zero, $F_g$ has its range included in $\mathrm{Rad}(E)$.
\end{lemma}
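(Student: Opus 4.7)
The plan is to verify the four axioms (i), (ii), (iii), (iv') of an $n$-dimensional pseudo spectral resolution for $F_g$, and then to establish the radical-containment claim as a separate argument. A preliminary step is to observe that $F_g(t_1,\ldots,t_n)\in E$: the defining expression is, after expansion, a signed sum of $F$-values evaluated at points of $(\widehat{\mathbb R})^n$ in the sense introduced after Lemma~\ref{le:+,-}, and by Lemma~\ref{le:IV} (applied with sequences of real endpoints approximating the extended values $t_i^{0+}$) the result is non-negative; the upper bound $F_g\le u_g$, where $u_g$ is the value obtained when all $t_i=+\infty$, then follows by monotonicity in each coordinate. For the volume condition (i), applying $\Delta_j(a_j,b_j)$ to $F_g$ rewrites as a product of the same form with the $j$-th factor replaced by a refined difference operator; the three cases ($b_j\le t_j^0$, $t_j^0<b_j$, and the overlap) are handled exactly as in the discussion just after equation~(\ref{eq:F,B}) for $F_B$, and the sign is settled by one more appeal to Lemma~\ref{le:IV}.

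For the left-continuity axiom (ii), I would express each factor as a limit of differences indexed by real sequences $s_i\nearrow t_i$, rewriting $F_g$ as a limit of signed sums of $F$-values, and then pull the supremum through using Lemma~\ref{lem:help3}. The case analysis splits according to whether $t_i<t_i^0$, $t_i=t_i^0$, or $t_i>t_i^0$, because $\min\{t_i^0,t_i\}$ and $\min\{t_i^{0+},t_i\}$ change regime across the characteristic coordinate. For (iii), sending $t_i\to-\infty$ kills the $i$-th factor in every case: if $g(i)=-1$ then $\min\{t_i^0,t_i\}=t_i$ and $\Delta_i(-\infty,t_i)F\to 0$ by axiom (iii) for $F$; if $g(i)\in\{0,1\}$, both endpoints of the $i$-th slot collapse to $t_i$ and the factor is identically zero. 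For (iv'), the existence of $\bigvee_{(t_1,\ldots,t_n)} F_g(t_1,\ldots,t_n)$ follows from Lemma~\ref{le:infty} combined with Lemma~\ref{lem:help3}; the resulting supremum is $u_g=\prod_i \Delta_i^{g(i)}F$, where $\Delta_i^{-1}=\Delta_i(-\infty,t_i^0)$, $\Delta_i^0=\Delta_i(t_i^0,t_i^{0+})$, and $\Delta_i^1=\Delta_i(t_i^{0+},\infty)$, which belongs to $E$ by Lemma~\ref{le:obser} and Remark~\ref{re:Delta}.

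For the radical claim I would fix a single index $j$ with $g(j)\in\{-1,1\}$ and argue that the $j$-th factor alone forces $F_g$ into $\mathrm{Rad}(E)$. If $g(j)=-1$, the factor equals $F(\ldots,\min\{t_j^0,t_j\},\ldots)$, whose $j$-th coordinate is $\le t_j^0$, so the value lies in $\mathrm{Rad}(E)$ by the characterization of radical values in the perfect effect algebra $E=\Gamma_{ea}(\mathbb Z\lex G,(1,0))$. If $g(j)=1$, the factor is either $0$ (for $t_j\le t_j^0$) or a difference of two $\mathrm{Rad}(E)'$-valued terms (for $t_j>t_j^0$), which lies in $\mathrm{Rad}(E)$ because two elements of the same block differ by a radical element. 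Since $\mathrm{Rad}(E)$ is closed under positive differences and under the monotone limits used to define the remaining extended $\Delta_i$'s, the iterated application of the other difference operators to a $\mathrm{Rad}(E)$-valued quantity preserves the radical containment.

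The main obstacle I anticipate is the bookkeeping for axiom (ii): one must interchange suprema (and in some regimes infima arising from the $t_i^{0+}$ symbols) with a $2^n$-term alternating sum whose entries are themselves iterated limits of $F$-values. Lemma~\ref{lem:help3} is designed exactly for this interchange, but checking its monotonicity hypothesis for each of the four relevant sequence patterns requires splitting by the position of $t_i$ relative to $t_i^0$ and doing so coherently across all $n$ coordinates at once, so the verification is delicate even though no new ideas beyond Lemmas~\ref{lem:help1}--\ref{le:obser} are needed.
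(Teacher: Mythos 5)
Your proposal is correct, but it is organized differently from the paper's proof. The paper does not verify the axioms for the full product $F_g$ directly; instead it isolates a single-coordinate Claim: for \emph{any} $n$-dimensional pseudo spectral resolution $F'$ and any index $i$, each of the three one-factor operations $\Delta_i(-\infty,\min\{t^0_i,t_i\})$, $\Delta_i(\min\{t^0_i,t_i\},\min\{{t^0_i}^+,t_i\})$, $\Delta_i(\min\{{t^0_i}^+,t_i\},t_i)$ again yields a pseudo spectral resolution, the first and third having no characteristic point and the second retaining it. The volume condition is checked there by the same composition rules you describe ($\Delta_i(s_i,s'_i)$ absorbed into $\Delta_i(\min\{s_i,t^0_i\},\min\{s'_i,t^0_i\})$, resp.\ a max-version, resp.\ a two-case analysis for the middle type), and conditions (ii), (iii), (iv)' then follow from Lemma~\ref{lem:help1} alone, since at each stage only a two-term difference is involved. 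Iterating the Claim $n$ times gives the lemma, and the radical statement falls out because once a factor of the first or third type is applied the result has no characteristic point, hence cannot meet $\Rad(E)'$. Your route verifies all four axioms for the composite in one pass, which forces you into Lemma~\ref{lem:help3} and the simultaneous $n$-coordinate case analysis you flag as delicate; the paper's iteration avoids exactly that bookkeeping, at the cost of stating the Claim in the generality of arbitrary pseudo spectral resolutions. Your direct computation of the radical containment (fixing one index $j$ with $g(j)\ne 0$ and observing that every term in the expansion either vanishes or lies in a fixed block, so the signed sum has zero $\mathbb Z$-component and, being non-negative, lies in $\Rad(E)$) is a legitimate alternative to the paper's characteristic-point argument and is arguably more self-contained; just be careful to phrase the $g(j)=1$ case as ``both terms lie in the same block'' rather than ``both lie in $\Rad(E)'$'', since whether they do depends on the remaining coordinates.
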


\begin{proof}

First of all, we establish the following useful Claim.

\vspace{3mm}
\noindent
{\it Claim.}  {\it Let $F':\mathbb R^n\to E$ be an $n$-dimensional pseudo spectral resolution on $E$. For every $i=1,\ldots,n$, each of the mappings
\begin{align*}
F'_{-1}(t_1,\ldots,t_n)&:=\Delta_i(-\infty,\min\{t^0_i,t_i\}) F'(t_1,\ldots,t_n),\\ F'_0(t_1,\ldots,t_n)&:=\Delta_i(\min\{{t^0_i},t_i\}, \min\{{t^0_i}^+,t_i\})F'(t_1,\ldots,t_n), \quad (t_1,\ldots,t_n)\in \mathbb R^n,\\
F'_1(t_1,\ldots,t_n)&:=\Delta_i(\min\{{t_i^0}^+,t_i\},t_i) F'(t_1,\ldots,t_n),
\end{align*}
is an $n$-dimensional pseudo spectral resolution. Moreover, if $(t^0_1,\ldots,t^0_n)$ is the characteristic point of $F'$, then $F'_{-1}$ and $F'_1$ have no characteristic but $F'_0$ has a unique characteristic point, namely $(t^0_1,\ldots,t^0_n)$.
}
\vspace{2mm}

Indeed, let $s_i\le s'_i$ for each $i=1,\ldots,n$.

(1) Then
\begin{equation}\label{eq:L2}
 \Delta_i(s_i,s_i')F'_{-1}=  \Delta_i(s_i,s'_i)\Delta_i(-\infty,\min\{t_i,t_i^0\})F'= \Delta_i(\min\{s_i,t_i^0\},\min\{s'_i,t_i^0\})F'.
\end{equation}

Therefore,
\begin{align*}\Delta_1(s_1,s'_1)\cdots \Delta_n(s_n,s'_n)F'_{-1}&= \Delta_1(s_1,s'_1)\cdots \Delta_{i-1}(s_{i-1},s'_{i-1})\Delta_i(\min\{s_i,t_i^0\},\min\{s'_i,t_i^0\})\\ &\quad \Delta_{i+1}(s_{i+1},s_{i+1})\cdots \Delta_n(s_n,s'_n)F' \ge 0.
\end{align*}

(2)
\begin{equation}\label{eq:L3}
 \Delta_i(s_i,s_i')F'_1=   \Delta_i(s_i,s'_i)\Delta_i(\min\{t_i,{t_i^0}^+\},t_i)F'= \Delta_i(\max\{s_i,{t_i^0}^+\},\max\{s'_i,{t_i^0}^+\})F',
\end{equation}
which yields as in (1)
$\Delta_1(s_1,s'_1)\cdots \Delta_n(s_n,s'_n)F'_1\ge 0$.

(3) There are two cases: (i) $t^0_i\in [s_i,s'_i)$ which implies
$$
\Delta_i(s_i,s'_i)F'_0=\Delta_i(s_i,s'_i)\Delta_i(\min\{{t^0_i},t_i\}, \min\{{t^0_i}^+,t_i\})F'= \Delta_i(t^0_i,{t^0_i}^+)F'.
$$
(ii)
$t^0_i\notin [s_i,s'_i)$ which implies
$$
\Delta_i(s_i,s_i')F'_0=\Delta_i(s_i,s'_i)\Delta_i(\min\{{t^0_i},t_i\}, \min\{{t^0_i}^+,t_i\})F'=0.
$$
In either case, we have $\Delta_1(s_1,s'_1)\cdots \Delta_n(s_n,s'_n)F'_0\ge 0$.

Altogether (1)--(3) show that every $F'_j$ for $j=-1,0,1$ satisfies the volume condition.

Applying Lemma \ref{lem:help1}, we see that every $F'_j$ satisfies conditions (ii), (iii), (iv)' of Definition \ref{de:n-dim}, so that $F'_j$ is an $n$-dimensional pseudo spectral resolution. In addition, from the form of $F'_{-1}$ and $F'_1$ we see that they have no characteristic point, whereas, $F'_0$ does have it, namely $(t^0_1,\ldots,t^0_n)$,
which finishes the proof of Claim.

Now, let $g:\{1,\ldots,n\}\to \{-1,0,1\}$ be non-zero. Claim shows that
$F_g$ can be obtained from an appropriate sequence of $n$ operators $\Delta$'s applied to a sequence of $n$ appropriate functions of type $F'_{-1},F'_0,$ and $F'_1$, consequently, $F_g$ is an $n$-dimensional pseudo spectral resolution from $\mathbb R^n$ into $\Rad(E)$ whenever $g\ne 0$.

If $g=0$, then $F_g(t_1,\ldots,t_n)=\Delta_1(\min\{{t^0_1},t_1\},\min\{{t^0_1}^+,t_1\})\cdots \Delta_n(\min\{{t^0_n},t_n\},\min\{{t^0_n}^+,t_n\})F$ is a two-valued mapping from $\mathbb R^n$ into $\{0,u^n_\emptyset\}$, where $u^n_\emptyset = \Delta_1(t_1^0,t_1^{0+})\cdots \Delta_n(t_n^0,t_n^{0+})F\in \Rad(E)'$, see (\ref{eq:u+n}). Hence, $F_g$ is an $n$-dimensional pseudo spectral resolution from $\mathbb R^n$ into $\{0,u^n_\emptyset\}$. Its characteristic point is $(t^0_1,\ldots,t^0_n)$ and $F_g$ fulfils (v) of Definition \ref{de:n-dim}.
\end{proof}

Using ideas of the present section, we establish one of the main results of the paper.

\begin{theorem}\label{th:main}
Let $M=\Gamma(\mathbb Z\lex G,(1,0))$, where $G$ is a Dedekind $\sigma$-complete $\ell$-group and let $F$ be an $n$-dimensional spectral resolution. Then there is a unique $n$-dimensional observable $x$ on $M$ such that $x((-\infty,t_1)\times\cdots\times (-\infty,t_n))=F(t_1,\ldots,t_n)$, $(t_1,\ldots,t_n)\in \mathbb R^n$.
\end{theorem}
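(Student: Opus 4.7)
The plan is to proceed by induction on $n$, noting that the base cases $n=1$ (via \cite[Thm 4.8]{DDL}) and $n=2$ (via Theorem \ref{th:TwoMV}) are already established, and that essentially all the ingredients required for the inductive step have been prepared in the preceding lemmas. I will, in fact, prove the slightly stronger statement: every $n$-dimensional \emph{pseudo} spectral resolution $F$ on $M$ extends uniquely to an $n$-dimensional observable on the interval algebra $[0,u_0]$, where $u_0=\bigvee F$. This formulation is needed for the induction because when we restrict $F$ to a hyperplane through the characteristic point we obtain a pseudo spectral resolution whose top is not $1$.

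First I would fix the unique characteristic point $\mathbf{t}_0=(t_1^0,\ldots,t_n^0)$ of $F$ and cut $\mathbb R^n$ by the $n$ hyperplanes $\{t_i=t_i^0\}$ into the $2^n$ blocks $B_{i_1,\ldots,i_n}$ described in (\ref{eq:Aij}). Using the difference operators $\Delta_j^{i_j}$ from (\ref{eq:Delta}), Lemma~\ref{le:obser} and Remark~\ref{re:Delta} provide $n$-dimensional pseudo spectral resolutions $F_{i_1,\ldots,i_n}$ with ranges in the radical $[0,u_{i_1,\ldots,i_n}]\subseteq\mathrm{Rad}(M)$, which, being a $\sigma$-complete MV-algebra, admits an extension to a unique observable $x_{i_1,\ldots,i_n}$ by \cite[Thm 5.1]{DvLa3}. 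Simultaneously, for each nonempty chain $n_1<\cdots<n_k$ ($1\le k<n$) the contraction $F_{n_1,\ldots,n_k}$ obtained by applying the operators $\Delta_{m_j}(t^0_{m_j},t^{0+}_{m_j})$ in the complementary coordinates is a $k$-dimensional pseudo spectral resolution on a lexicographic interval algebra with a unique characteristic point, which by the induction hypothesis extends to a unique $k$-dimensional observable $x_{n_1,\ldots,n_k}$.

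Having these ingredients, I would \emph{define} $x$ by the inclusion--exclusion formula (\ref{eq:formula}) and verify the four properties of an $n$-dimensional observable. Positivity $x(A)\ge 0$ is Lemma~\ref{le:positive} (via the reorganisation (\ref{eq:formula1})); finite additivity and the monotone-continuity property $x(\bigcup_i A_i)=\bigvee_i x(A_i)$ for increasing Borel sequences are inherited from the corresponding properties of the $x_{i_1,\ldots,i_n}$ and $x_{n_1,\ldots,n_k}$, using Lemma~\ref{lem:help1} to commute suprema with sums and differences. The normalisation $x(\mathbb R^n)=1$ is precisely the identity (\ref{eq:f3.1}), whose inductive proof is already worked out in the excerpt by splitting $\Delta_{n+1}(-\infty,\infty)=\Delta_{n+1}^0+\Delta_{n+1}(t_{n+1}^0,t_{n+1}^{0+})+\Delta_{n+1}^1$ and regrouping. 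That $x$ extends $F$, i.e.\ $x((-\infty,t_1)\times\cdots\times(-\infty,t_n))=F(t_1,\ldots,t_n)$, is the content of Lemma~\ref{le:F(x)}, which itself proceeds by induction on the number of coordinates with $t_j>t_j^0$.

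Uniqueness follows from a Dynkin/Sierpi\'nski argument exactly as at the end of the proof of Theorem~\ref{th:TwoMV}: if $y$ is another such observable, the class $\mathcal K=\{A\in\mathcal B(\mathbb R^n):x(A)=y(A)\}$ is a Dynkin system containing the $\pi$-system of all rectangles $(-\infty,t_1)\times\cdots\times(-\infty,t_n)$, hence equals $\mathcal B(\mathbb R^n)$. The main technical obstacle I expect is purely combinatorial bookkeeping: namely, checking that the inclusion--exclusion alternating sums in (\ref{eq:formula}) and (\ref{eq:f3.1}) genuinely collapse to the right quantities when one expands $\Delta_i(-\infty,t_i)=\Delta_i(-\infty,t_i^0)+\Delta_i(t_i^0,t_i^{0+})+\Delta_i(t_i^{0+},t_i)$ in each coordinate and regroups. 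The positivity of $x$ (Lemma~\ref{le:positive}) is the only step where signs must be tracked carefully, and everything else reduces to routine, if tedious, algebra in the po-group $\mathbb Z\lex G$ combined with the monotone-continuity lemmas \ref{lem:help1}--\ref{lem:help3}.
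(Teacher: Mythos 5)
Your proposal is correct and follows essentially the same route as the paper's first proof: the paper likewise cuts $\mathbb R^n$ at the characteristic point into $2^n$ blocks plus lower-dimensional faces, defines $x$ by the inclusion--exclusion formula (\ref{eq:formula}), and invokes (\ref{eq:f3.1}), Lemma~\ref{le:positive}, Lemma~\ref{le:F(x)}, and the Sierpi\'nski Theorem exactly as you describe, with the same induction hypothesis on pseudo spectral resolutions of lower dimension. The paper additionally records a second, alternative proof via the $3^n$-term decomposition $F=\sum_g F_g$ of Lemma~\ref{eq:Fg}, but your argument coincides with the first one.
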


\begin{proof}
We present two different proofs of the statement

\vspace{3mm}
\noindent
{\bf Proof 1.}
\vspace{2mm}

Take $x$ determined by {\rm (\ref{eq:formula})}. Lemma \ref{le:F(x)} says $x$ is an extension of $F$.

\vspace{3mm}
\noindent
{\bf Proof 2.}
\vspace{2mm}
Take the family $\{F_g\colon g \in \{-1,0,1\}^{\{1,\ldots,n\}}\}$ of $n$-dimensional pseudo spectral resolutions on $M$ defined by Lemma \ref{eq:Fg}. Therefore, $F$ can be decomposed as follows
\begin{equation}\label{eq:decomposition}
\begin{split}
    F(t_1,\ldots,t_n)&=\left(\Delta_1(-\infty,\min\{t^0_1,t_1\})+ \Delta_1(\min\{{t^0_1},t_1\},\min\{{t^0_1}^+,t_1\})+ \Delta_1(\min\{{t^0_1}^+,t_1\},t_1)\right)\cdots\\
&\left(\Delta_n(-\infty,\min\{t^0_n,t_n\})+ \Delta_n(\min\{{t^0_n},t_n\},\min\{{t^0_n}^+,t_n\})+ \Delta_n(\min\{{t^0_n}^+,t_n\},t_n)\right)F\\
&=\sum_{g}F_g(t_1,\ldots,t_n).
\end{split}
\end{equation}
Equation (\ref{eq:decomposition}) holds thanks to two points:
(a) For each $i=1,\ldots,n$, we have
$$
\left(\Delta_i(-\infty,\min\{t^0_1,t_1\})+ \Delta_i(\min\{{t^0_1},t_1\},\min\{{t^0_1}^+,t_1\})+ \Delta_i(\min\{{t^0_1}^+,t_1\},t_1)\right)F=\Delta_i(-\infty,t_1)F.
$$
(b) We are using several identities: commutativity~\eqref{eq:II} and distributivity of $\Delta_i$'s operators: For each $i\not=j$ and reals $t\leq t'\leq t''$ and $s\leq s'\leq s''$ we have
\begin{align}
    \big(\Delta_i(t,t')+\Delta_i(t',t'')\big)\Delta_j(s,s')F&= \Delta_i(t,t')\Delta_j(s,s')F+\Delta_i(t',t'')\Delta_j(s,s')F \label{eq:dis1},\\
    \Delta_i(t,t')\big(\Delta_j(s,s')+\Delta_j(s',s'')\big)F&= \Delta_i(t,t')\Delta_j(s,s')F+\Delta_i(t,t')\Delta_j(s',s'')F.\label{eq:dis2}
\end{align}
Equation~\eqref{eq:dis1} holds immediately by definition and the equation~\eqref{eq:dis2} follows from~\eqref{eq:dis1} and the commutativity~\eqref{eq:I}.

Unless the case $g$ equals constantly $0$, $F_g$ is  an $n$-dimensional pseudo spectral resolution with support included in $A_g:=A^{g(1)}_i\times\cdots\times A^{g(n)}_n$, where
$$
A^j_i= \left\{\begin{array}{lll}
(-\infty,t^0_{i})  & \mbox{if}\ j=-1,\\
\{t^0_i\} & \mbox{if} \ j=0,\\
(t^0_{i},\infty)  & \mbox{if}\ j=-1,\\
\end{array}
\right.
$$
and $F_g(\mathbb R^n)\subseteq \mathrm{Rad}(M)$. Let $u_g=F_g(\infty,\ldots,\infty)$. Then $F_g$ is an $n$-dimensional spectral resolution on the interval algebra $[0,u_g]$ which is a $\sigma$-complete MV-algebra.
So we may apply the result from \cite[Thm 5.1]{DvLa3} and extend each $F_g$ with $g\ne 0$ to an $n$-dimensional observable $x_g$ on $[0,u_g]$. In the case of $g$ equals constantly zero, $F_g$ trivially extends to an unique $n$-dimensional observable $x_g$ with support $\{(t^0_1,\ldots,t^0_n)\}$, that is, $x_g(A) = u^n_\emptyset$ if $(t^0_1,\ldots,t^0_n)\in A$, otherwise $x_g(A)=0$, $A \in \mathcal B(\mathbb R^n)$.  Finally, we define
\begin{equation}\label{eq:Fg1}
x=\sum_g x_g.
\end{equation}
From formula~\eqref{eq:decomposition} all the conditions on $x$ to be an observable immediately follow. From the construction of all $F_g$'s, we see that $x((-\infty,t_1)\times \cdots\times (-\infty,t_n))= F(t_1,\ldots,t_n)$, $(t_1,\ldots,t_n)\in \mathbb R^n$.

Using the Sierpi\'nski Theorem, \cite[Thm 1.1]{Kal}, we have the uniqueness of $x$ in both proofs.
\end{proof}

We note that $x$'s defined by (\ref{eq:formula}) or by (\ref{eq:Fg1}) or by (\ref{eq:formula1}) are the same, compare with Theorem \ref{th:TwoEA}.

The following theorem follows the same proof as that of Theorem \ref{th:main}.

\begin{theorem}\label{th:nEA}
If $E=\Gamma_{ea}(\mathbb Z\lex G,(1,0))$ is a perfect effect algebra, where $G$ is a directed monotone $\sigma$-complete po-group with interpolation, then every $n$-dimensional spectral resolution can be extended to a unique $n$-dimensional observable on $E$.
\end{theorem}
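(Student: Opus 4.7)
The plan is to mirror the argument of Theorem \ref{th:main} (specifically Proof 2, via the decomposition of Lemma \ref{eq:Fg}), substituting the effect-algebraic tools for the MV-algebraic ones at every step. The crucial point is that Lemma \ref{eq:Fg} is already formulated for lexicographic effect algebras $\Gamma_{ea}(\mathbb{Z}\lex G,(1,0))$ with $G$ directed Dedekind monotone $\sigma$-complete with interpolation, and the same is true of the auxiliary Lemmas \ref{le:infty}, \ref{lem:help1}, \ref{lem:help3}, \ref{le:+,-}, and \ref{le:obser}; hence no new preparatory machinery is needed.

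Let $F$ be an $n$-dimensional spectral resolution on $E$ and let $(t^0_1,\ldots,t^0_n)$ be its unique characteristic point. By Lemma \ref{eq:Fg} together with the commutativity and distributivity identities (\ref{eq:II}), (\ref{eq:dis1})--(\ref{eq:dis2}), the decomposition (\ref{eq:decomposition}) gives
\[
F(t_1,\ldots,t_n)=\sum_{g\in\{-1,0,1\}^{\{1,\ldots,n\}}} F_g(t_1,\ldots,t_n).
\]
For every $g\not\equiv 0$, $F_g$ is an $n$-dimensional spectral resolution with range in $\Rad(E)$, and I would set $u_g:=F_g(\infty,\ldots,\infty)\in\Rad(E)$. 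Because $G$ has interpolation and is directed Dedekind monotone $\sigma$-complete, each interval effect algebra $[0,u_g]\subseteq\Rad(E)$ is a monotone $\sigma$-complete effect algebra with the Riesz decomposition property; I would then invoke \cite[Thm 5.2]{DvLa3} to obtain a unique $n$-dimensional observable $x_g$ on $[0,u_g]$ extending $F_g$. For the identically zero $g$, $F_g$ takes only the values $0$ and $u^n_\emptyset$, is supported at the characteristic point, and admits the trivial Dirac-type extension $x_g$ with mass $u^n_\emptyset$ concentrated at $(t^0_1,\ldots,t^0_n)$.

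Defining $x:=\sum_g x_g$ as in (\ref{eq:Fg1}), the defining properties of an $n$-dimensional observable follow: additivity and monotonicity from those of each $x_g$; monotone $\sigma$-continuity from Lemma \ref{lem:help1}; and $x(\mathbb{R}^n)=\sum_g u_g=F(\infty,\ldots,\infty)=1$ by evaluating the decomposition at infinity. Evaluating $x$ on the orthant $(-\infty,t_1)\times\cdots\times(-\infty,t_n)$ and using the support properties of the $F_g$'s recovers $F(t_1,\ldots,t_n)$. Uniqueness follows, as in Theorem \ref{th:main}, from the Sierpi\'nski Theorem \cite[Thm 1.1]{Kal} applied to the Dynkin system $\{A\in\mathcal B(\mathbb{R}^n):x(A)=y(A)\}$ for any competing $y$.

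The main obstacle I expect is the verification that $x(A)\in E$ for every Borel $A$: in the MV-algebra proof this is Lemma \ref{le:positive}, which exploits the availability of arbitrary meets and joins. In the effect-algebraic setting the partial addition is only conditionally defined, so I would rewrite $x(A)$ in the form (\ref{eq:formula1}), as a sum of observable values attached to the pairwise disjoint strata of $\mathbb{R}^n$ cut out by the hyperplanes through the characteristic point. Since each stratum-observable takes values in some $[0,u_g]$ and the $u_g$'s sum to $1$ inside $E$ (the effect-algebraic analogue of (\ref{eq:f3.1})), the total sum is legitimately defined in $E$. Once this bookkeeping is carried out, the remaining checks transfer verbatim from Theorem \ref{th:main}.
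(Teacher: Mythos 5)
Your proposal is correct and follows essentially the same route as the paper, which simply states that Theorem \ref{th:nEA} follows the proof of Theorem \ref{th:main}: you run Proof 2 via the decomposition of Lemma \ref{eq:Fg} (already stated for lexicographic effect algebras), replace the citation of \cite[Thm 5.1]{DvLa3} by \cite[Thm 5.2]{DvLa3} for the monotone $\sigma$-complete interval effect algebras $[0,u_g]\subseteq\Rad(E)$, and conclude uniqueness by the Sierpi\'nski Theorem. Your extra care about the well-definedness of the partial sums in $E$ (via the disjoint-strata rewriting and the identity $\sum_g u_g=1$) is a reasonable and faithful filling-in of a detail the paper leaves implicit.
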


\section{$n$-dimensional Spectral Resolutions on $k$-perfect MV-algebras and $k$-perfect Effect Algebras}%8

Let $M=\Gamma(\mathbb Z \lex G,(k,0))$ and $E=\Gamma_{ea}(\mathbb  Z \lex G,(k,0))$ be a $k$-perfect MV-algebra and a $k$-perfect effect algebra, respectively. We will suppose that $G$ is a Dedekind $\sigma$-complete $\ell$-group in the first case and a directed Dedekind monotone $\sigma$-complete po-group with interpolation in the second case. We say that an $n$-dimensional spectral resolution $F$ has the {\it ordering property} if (1) every non-empty $T_{k_j}$, $k_j=1,\ldots,k$, has at most one characteristic point $\mathbf t_j=(t^{j0}_1,\ldots,t^{j0}_n)$, and (2) if $\mathbf t_i$ and $\mathbf t_j$ are characteristic points of $T_{k_i}$ and $T_{k_j}$ then $\mathbf t_i \le \mathbf t_j$ whenever $0<k_i<k_j\le k$. We remind that $M_0=\{0\}\times G^+$, $M_j= \{j\}\times G$ if $0<j<k$, and $M_k=\{k\}\times G^-$.

First, we establish an analogous result as Claim of Lemma \ref{eq:Fg}. The ordering property gives for each $i=1,\ldots,n$
\begin{equation}\label{eq:chain}
\begin{split}
-\infty <t^{1,0}_i&\le \min\{{t^{1,0}_i}^+,t_i\}\le \min\{t^{2,0}_i,t_i\}\le \min\{{t^{2,0}_i}^+,t_i\}\le\cdots\le \min\{t^{j,0}_i,t_i\}\le \min\{{t^{j,0}_i}^+,t_i\}\\
&\le \min\{t^{j+1,0}_i,t_i\}\le \min\{{t^{j+1,0}_i}^+,t_i\}\le \cdots
\le \min\{t^{l,0}_i,t_i\} \le \min\{{t^{l,0}_i}^+,t_i\}\le t_i.
\end{split}
\end{equation}

The last chain of inequalities suggests the following Lemma.

\begin{lemma}\label{le:claim}
Let $F'$ be an $n$-dimensional pseudo spectral resolution on $M=\Gamma(\mathbb Z\lex G,(k,0))$ with the ordering property. Let $\mathbf t_1\le \cdots\le \mathbf t_l$ be all the characteristic points of $F'$ and let $\mathbf t_j =(t^{0,j}_1,\ldots,t^{0,j}_n)$, $j=1,\ldots,l$. Assume $l>1$.
We define the following functions from $\mathbb R^n$ into $M$
\begin{align*}
F'_{i,1}(t_1,\ldots,t_n)&= \Delta_i(-\infty,\min\{t^{1,0}_i,t_i\})F'(t_1,\ldots,t_n),\\
F'_{i,2j}(t_1,\ldots,t_n)&=\Delta_i(\min\{t^{j,0}_i,t_i\}, \min\{{t^{j,0}_i}^+,t_i\})F'(t_1,\ldots,t_n),\quad j=1,\ldots,l,\\
F'_{i,2j+1}(t_1,\ldots,t_n)&= \Delta_i(\min\{{t^{j,0}_i}^+,t_i\},\min\{t^{j+1,0}_i,t_i\}) F'(t_1,\ldots,t_n),\quad j=1,\ldots,l-1,\\
F'_{i,2l+1}(t_1,\ldots,t_n)&= \Delta_i(\min\{{t^{l,0}_i}^+,t_i\}, t_i)F'(t_1,\ldots,t_n)
\end{align*}
for all $(t_1,\ldots,t_n)\in \mathbb R^n$ and $i=1,\ldots,n$.
Then every $F'_{i,j}$ is an $n$-dimensional pseudo spectral resolution on $M$. In addition, each of $F'_{i,2j}$ has the same characteristic points as $F'$, on the other side, all other functions $F'_{i,j}$ have no characteristic point.
\end{lemma}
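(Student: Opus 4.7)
The plan is to extend the Claim embedded in the proof of Lemma~\ref{eq:Fg}, which treated a single characteristic point, to the full chain of characteristic points available here under the ordering property. The key structural observation is that the chain~\eqref{eq:chain} partitions the ray $(-\infty,t_i]$ into $2l+1$ consecutive segments whose endpoints are of the form $\min\{t^{j,0}_i,t_i\}$ or $\min\{{t^{j,0}_i}^+,t_i\}$, so that $\sum_r F'_{i,r}$ telescopes back to $F'$; each $F'_{i,r}$ is therefore a well-defined additive piece, and it suffices to verify that each piece individually satisfies Definition~\ref{de:n-dim}(i)--(iv)' and has the claimed characteristic structure.

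For the volume condition I would fix $s_i\le s'_i$ and compute $\Delta_i(s_i,s'_i)F'_{i,r}$ by a short case analysis on the position of $s_i,s'_i$ relative to the points $t^{j,0}_i$ and ${t^{j,0}_i}^+$. Exactly as in~\eqref{eq:L2} and~\eqref{eq:L3}, the result collapses to a single $\Delta_i(\alpha,\beta)F'$ obtained by clipping the endpoints of the $r$-th segment to the interval $[s_i,s'_i]$; for instance $\Delta_i(s_i,s'_i)F'_{i,1}=\Delta_i(\min\{t^{1,0}_i,s_i\},\min\{t^{1,0}_i,s'_i\})F'$, while $\Delta_i(s_i,s'_i)F'_{i,2j}$ equals the one-coordinate jump $\Delta_i(t^{j,0}_i,{t^{j,0}_i}^+)F'$ when $s_i\le t^{j,0}_i<s'_i$ and is $0$ otherwise, and similarly for the remaining two forms. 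Multiplying by the commuting operators $\Delta_m(s_m,s'_m)$ for $m\ne i$ then delivers a non-negative volume of $F'$ by Lemma~\ref{le:IV}, giving condition~(i).

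The continuity and boundary conditions (ii), (iii), (iv)' are obtained in the same spirit as in the Claim of Lemma~\ref{eq:Fg}: each $F'_{i,r}$ is a finite algebraic combination of evaluations of $F'$, so Lemma~\ref{lem:help1} (and Lemma~\ref{lem:help3} for the compound operators) allows us to push the relevant suprema and infima through the difference structure once the requisite monotonicities --- themselves consequences of the volume step --- are in place. At the boundary $t_i\to -\infty$ every term in each $F'_{i,r}$ collapses by axiom (iii) for $F'$, and at $t_i\to +\infty$ the $\min$'s stabilize to their upper values, giving an element of $M$ which, when $r$ is odd, is in fact controlled by Lemma~\ref{le:obser} applied to a single segment and, when $r$ is even, realizes $u^n_\emptyset$-like jumps.

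For the characteristic points I would split into cases. In the odd-indexed $F'_{i,1}$, $F'_{i,2j+1}$ and $F'_{i,2l+1}$, the $i$-th argument of $F'$ is always trapped strictly between two consecutive levels of~\eqref{eq:chain} (or below the smallest, or above the largest), and the ordering property then forces the two values of $F'$ appearing in the underlying $\Delta_i$-difference to sit in a common block of $M$; their difference therefore lies in $\Rad(M)$, so the image of such an $F'_{i,r}$ is contained in $\Rad(M)$ and no characteristic point exists. In the even-indexed $F'_{i,2j}$, the non-zero values identify with the jump $\Delta_i(t^{j,0}_i,{t^{j,0}_i}^+)F'$ from the previous step, and applying Lemma~\ref{le:obser}(ii) together with Remark~\ref{re:Delta} block by block recovers every characteristic point of $F'$ as a characteristic point of $F'_{i,2j}$, with no new characteristic points introduced. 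The hardest step will be this final block-tracking: keeping precise track of which block of $M$ the element $F'_{i,r}(t_1,\ldots,t_n)$ lies in as $(t_1,\ldots,t_n)$ crosses several characteristic hyperplanes simultaneously. The ordering property reduces this to a one-variable combinatorial problem in the $i$-th coordinate, but extra care is required at points where two different $\mathbf{t}_j$'s share a coordinate.
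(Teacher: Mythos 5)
Your proposal is correct and follows essentially the same route as the paper: telescoping the chain \eqref{eq:chain} into $2l+1$ pieces per coordinate, verifying the volume condition by collapsing $\Delta_i(s_i,s'_i)F'_{i,r}$ to a clipped one-coordinate difference (with the even-indexed pieces reducing to the jump $\Delta_i(t^{j,0}_i,{t^{j,0}_i}^+)F'$ or $0$), invoking Lemma~\ref{lem:help1} for conditions (ii)--(iv)', and then placing the odd-indexed pieces in $\Rad(M)=M_0$ while tracking the characteristic points through the even-indexed ones. The paper's own proof handles the last step by a direct block computation ($F'_{i,2j}(t_1,\ldots,t_n)\in M_{k_j-k_{j-1}}$ for $(t_1,\ldots,t_n)\gg\mathbf t_j$) rather than by citing Lemma~\ref{le:obser}(ii), but this is the same idea in different clothing.
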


\begin{proof}
The volume condition: Let $s_i\le s'_i$ be real numbers for $i=1,\ldots,n$.

(1) Then $\Delta_i(s_i,s'_i)F'_{i,1}= \Delta_i(s_i,s'_i)\Delta_i(-\infty,\min\{t^{1,0}_i,t_i\})F' = \Delta_i(\min\{t^{1,0}_i,s_i\},\min\{t^{1,0}_i,s'_i\})F'$, so that
\begin{align*}
\Delta_1(s_1,s'_1)\cdots\Delta_n(s_n,s'_n)F'_{i,1}&= \Delta_1(s_1,s')\cdots \Delta_{i-1}(s_{i-1},s'_{i-1})
\Delta_i(\min\{t^{1,0}_i,s_i\}, \min\{t^{1,0}_{i},s'_{i}\})\\
&\quad\Delta_{i+1}(s_{i+1},s'_{i+1})\cdots \Delta_n(s_n,s'_n)F'\ge 0.
\end{align*}

(2) We have
$$\Delta_i(s_i,s'_i)F'_{i,2j}=\Delta_i(\min\{t^{j,0}_i,t_i\}, \min\{{t^{j,0}_i}^+,t_i\})F'=\Delta_i(\max\{t^{j,0}_i,t_i\}, \max\{{t^{j,0}_i}^+,t_i\})F',
$$
which yields
\begin{align*}
\Delta_1(s_1,s'_1)\cdots\Delta_n(s_n,s'_n)F'_{i,2j}&=
\Delta_1(s_1,s'_1)\cdots\Delta_{i-1}(s_{i-1},s'_{i-1})
\Delta_i(\max\{t^{j,0}_i,t_i\}, \max\{{t^{j,0}_i}^+,t_i\})\\
&\quad\Delta_{i+1}(s_{i+1},s'_{i+1})\cdots \Delta_n(s_n,s'_n)F'\ge 0.
\end{align*}

(3) There are six cases: (i) $s_i\le t^{j,0}_i\le t^{j+1,0}_i < s'_i$. Then
$\Delta_i(s_i,s'_i)F'_{i,j+1}= \Delta_i({t^{j,0}_i}^+,t^{j+1,0}_i)F$. (ii) $t^{j,0}_i< s_i < s'_i <t^{j+1,0}_i$. Then $\Delta_i(s_i,s'_i)F'_{i,j+1}= \Delta_i(s_i,s'_i)F$. (iii) $t^{j,0}_i \le t^{j+1,0}_i < s_i <s'_i$. Then $\Delta_i(s_i,s'_i)F'_{i,j+1}= 0$. (iv) $t^{j,0}_i <s_i \le t^{j+1,0}_i<s'_i$. Then $\Delta_i(s_i,s'_i)F'_{i,2j+1}= \Delta_i(s_i,t^{j+1,0}_i)F$. (v) $s_i \le t^{j,0}_i < s'_i < t^{j+1,0}_i$. Then $\Delta_i(s_i,s'_i)F'_{i,j+1}= \Delta_i({t^{j,0}_i}^+,s'_i)F$, and (vi) $s_i<s'_i\le t^{j,0}_i\le t^{j+1,0}_i$. Then $\Delta_i(s_i,s'_i)F'_{i,j+1}= 0$. In either case, $\Delta_1(s_1,s'_1)\cdots \Delta_n(s_n,s'_n)F'_{i,j+1}\ge 0$.

(4) There are two cases: (i) $t^{l,0}_i\in [s_i,s'_i)$ which implies
$$
\Delta_i(s_i,s'_i)F'_{i,l+1}=\Delta_i(s_i,s'_i)\Delta_i(\min\{{t^{l,0}_i}, t_i\}, \min\{{t^{l,0}_i}^+,t_i\})F'= \Delta_i(t^{l,0}_i,{t^{l,0}_i}^+)F'.
$$
(ii)
$t^{l,0}_i\notin [s_i,s'_i)$ which implies
$$
\Delta_i(s_i,s_i')F'_{i,2l+1}=\Delta_i(s_i,s'_i)\Delta_i(\min\{{t^{l,0}_i},
t_i\}, \min\{{t^{l,0}_i}^+,t_i\})F'=0.
$$
In either case, we have $\Delta_1(s_1,s'_1)\cdots \Delta_n(s_n,s'_n)F'_{i,2l+1}\ge 0$.

Altogether (1)--(4) entail that every above defined function satisfies the volume condition.

Applying Lemma \ref{lem:help1}, we see that the above defined functions satisfy condition (ii), (iii), (iv)' of Definition \ref{de:n-dim}, so that they are $n$-dimensional pseudo spectral resolutions.

Moreover, every $F'_{i,1}(t_1,\ldots,t_n)\in M_0$, $F'_{i,2j+1}(t_1,\ldots,t_n)\in M_0$, and $F'_{i,2l+1}(t_1,\ldots,t_n)\in M_0$ for each $i=1,\ldots,n$ and all $(t_1,\ldots,t_n)\in \mathbb R^n$.

Suppose that $\mathbf t_j=(t^{0,j},\ldots,t^{0,n})$ is a characteristic point of $F'$ for a fixed $j=1,\ldots,k$. Take $(t_1,\ldots,t_k) \in T_{k_j}=\{(s_1,\ldots,s_n)\in \mathbb R^n\colon F'(s_1,\ldots,s_n)\in M_{k_j}\}$. Then $(t_1,\ldots,t_n)\gg (t^{j,0}_n,\ldots,t^{j,0}_n)$ which gives
$$
F'_{i,2j}(t_1,\ldots,t_n)=F'(t_1,\ldots, {t^{j,0}_i}^+,\ldots,t_n)- F'(t_1,\ldots, t^{j,0}_i,\ldots,t_n)\in M_{k_j-k_{j-1}},
$$
and $k_j-k_{j-1}>0$, moreover, $F_{i,2j}(t^{j,0}_1,\ldots,t^{j,0}_n)=0$ and $F_{i,2j}({t^{j,0}_1}^+,\ldots,{t^{j,0}_n}^+) =  F({t^{j,0}_1}^+,\ldots,{t^{j,0}_n}^+) - F({t^{j,0}_1}^+,\ldots, t^{j,0}_i,\ldots, {t^{j,0}_n}^+)\in  M_{k_j-k_{j-1}}$,
claiming $\mathbf t_j$ is a characteristic point of $F'_{i,2j}$.
\end{proof}

\begin{theorem}\label{th:k-perf}
Let $M=\Gamma(\mathbb Z\lex G,(1,k))$ be a $k$-perfect MV-algebra, where $G$ is a Dedekind $\sigma$-complete $\ell$-group and let $F$ be an $n$-dimensional spectral resolution on $M$ with the ordering property. Then there is a unique $n$-dimensional observable $x$ on $M$ which is an extension of $F$.
\end{theorem}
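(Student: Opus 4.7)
The plan is to mimic Proof 2 of Theorem~\ref{th:main}, replacing the Claim inside Lemma~\ref{eq:Fg} with the richer decomposition afforded by Lemma~\ref{le:claim}. For each coordinate $i$, the chain (\ref{eq:chain}) produces a telescoping identity
$$\Delta_i(-\infty,t_i)F \;=\; \sum_{j=1}^{2l+1}\Xi^i_j F,$$
where $\Xi^i_j$ denotes the difference operator whose action on $F$ yields $F'_{i,j}$ of Lemma~\ref{le:claim}. Setting $F_g := \prod_{i=1}^n \Xi^i_{g(i)}F$ for $g \in \{1,\ldots,2l+1\}^{\{1,\ldots,n\}}$ and invoking commutativity~(\ref{eq:II}) together with the distributivity identities~(\ref{eq:dis1})--(\ref{eq:dis2}), I would obtain
$$F(t_1,\ldots,t_n) \;=\; \sum_{g} F_g(t_1,\ldots,t_n),$$
where each $F_g$ is an $n$-dimensional pseudo spectral resolution by iterated application of Lemma~\ref{le:claim}.

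Next I would classify the $F_g$'s. If some $g(i)$ is odd, Lemma~\ref{le:claim} guarantees that $F_g$ has no characteristic point, so its range lies in $\Rad(M)=M_0$; letting $u_g := F_g(\infty,\ldots,\infty) \in \Rad(M)$, the interval $[0,u_g]$ is a $\sigma$-complete MV-algebra on which $F_g$ is an honest $n$-dimensional spectral resolution, and \cite[Thm 5.1]{DvLa3} furnishes an observable $x_g$ extending it. If instead every $g(i) = 2j_i$ is even, then $F_g$ is a step function: it vanishes unless $t_i > t^{j_i,0}_i$ for every $i$ and otherwise equals the constant $v_g := \prod_{i=1}^n \Delta_i(t^{j_i,0}_i,{t^{j_i,0}_i}^+)F$, which is non-negative by Lemma~\ref{le:IV}; here I would take $x_g$ to be the trivial point-mass observable supported at $(t^{j_1,0}_1,\ldots,t^{j_n,0}_n)$ with weight $v_g$.

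Assembling $x := \sum_g x_g$, monotonicity, finite additivity and $\sigma$-continuity along increasing unions transfer from the summands, while $x(\mathbb R^n)=\sum_g u_g = F(\infty,\ldots,\infty)=1$ follows by evaluating the decomposition at $(\infty,\ldots,\infty)$; that $x$ extends $F$ is immediate from the telescoping. Uniqueness then follows, as usual, by applying Sierpi\'nski's theorem \cite[Thm 1.1]{Kal} to the Dynkin system $\{A\in\mathcal B(\mathbb R^n):x(A)=y(A)\}$, which contains every product interval $(-\infty,t_1)\times\cdots\times(-\infty,t_n)$. The main obstacle I anticipate is the ``all-even'' case with non-constant $j_i$: the corner point $(t^{j_1,0}_1,\ldots,t^{j_n,0}_n)$ is generally not a characteristic point of $F$, yet carries a nontrivial mass $v_g$ that must nevertheless be accounted for in $x$. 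The ordering hypothesis is precisely what makes this tractable: it forces $\mathbf t_1\le\cdots\le\mathbf t_l$ to form a chain coordinate-wise, so that the telescoping~(\ref{eq:chain}) is consistent in every coordinate, the $(2l+1)^n$ pieces really do reassemble $F$, and the spurious corner masses are cleanly enumerated by tuples $(j_i)\in\{1,\ldots,l\}^n$ without cross-layer interference in $M$.
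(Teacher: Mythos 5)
Your proposal is correct and follows essentially the same route as the paper's own proof: the telescoping decomposition of each $\Delta_i(-\infty,t_i)$ along the chain (\ref{eq:chain}), the classification of the $(2l+1)^n$ pseudo spectral resolutions via Lemma \ref{le:claim} into radical-valued pieces (extended by \cite[Thm 5.1]{DvLa3}) and all-even point masses (extended trivially), followed by summation and Sierpi\'nski for uniqueness. The "all-even with non-constant indices" corner masses you flag are exactly the pieces the paper handles as observables concentrated at a single point, so there is no gap.
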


\begin{proof}
First of all, for each $i=1,\ldots,n$, we set
\begin{align*}
\Delta_i^{1}&= \Delta_i(-\infty,\min\{t^{1,0}_i,t_i\}),\\
\Delta_i^{2j}&= \Delta_i(\min\{t^{j,0}_i,t_i\}, \min\{{t^{j,0}_i}^+,t_i\}),\quad j=1,\ldots,l,\\ \Delta_i^{2j+1}&=\Delta_i(\min\{{t^{j,0}_i}^+,t_i\},
\min\{t^{j+1,0}_i,t_i\}), \quad j=1,\ldots,l-1,\\
\Delta_i^{2l+1}&= \Delta_i(\min\{{t^{l,0}_i}^+,t_i\}, t_i).
\end{align*}
For any $(j_{1},\ldots,j_n) \in \{1,\ldots,2l+1\}^n$, we define
$$
F_{j_1,\ldots,j_n}= \Delta_1^{j_1}\cdots\Delta_n^{j_n}F.
$$
Thanks to Lemma \ref{le:claim}, every $F_{j_1,\ldots,j_n}$ is an $n$-dimensional pseudo spectral resolution such that it has no characteristic point whenever $(j_1,\ldots,j_n)\ne (2k_1,\ldots,2k_n)$, where $k_i= 1,\ldots,l$ for $i=1,\ldots,n$. In such a case, applying \cite[Thm 5.1]{DvLa3}, on the interval $[0,F_{j_1,\ldots,j_n}(\infty,\ldots,\infty)]$, which is a $\sigma$-complete MV-algebra, there is an $n$-dimensional observable $x_{j_1,\ldots,j_n}$ which is an extension of $F_{j_1,\ldots,j_n}$.

On the other hand, due to Lemma \ref{le:claim}, we have that $F_{2k_1,\ldots,2k_n}$ has the same characteristic points as $F$ and it is concentrated in the point $(t^{0,k_1}_1,\ldots,t^{0,k_n})$. Therefore, it is trivial to extend $F_{2k_1,\ldots,2k_n}$ to an $n$-dimensional observable $x_{2k_1,\ldots,2k_n}$. If we define a mapping $x:\mathcal B(\mathbb R^n)\to M$ by
$$
x(A)= \sum \{x_{j_1,\ldots,j_n}(A)\colon (j_1,\ldots,j_n)\in \{1,\ldots,2l+1\}^n\},\quad A \in \mathcal B(\mathbb R^n),
$$
then it is an $n$-dimensional observable on $M$.
If we express $F$ in the form
$$
F(t_1,\ldots,t_n)=\Delta_1(-\infty,t_1)\cdots\Delta_n(-\infty,t_n)F,
$$
and using the chain of inequalities (\ref{eq:chain}) and additivity and multiplication of differences, we get
$$
F(t_1,\ldots,t_n) =\sum_{(j_1,\ldots,j_n)}F_{j_1,
\ldots,j_n}(t_1,\ldots,t_n), \quad (t_1,\ldots,t_n)\in \mathbb R^n,
$$
showing that $x$ is an extension of $F$. The Sierpi\'nski Theorem, \cite[Thm 1.1]{Kal} guarantees the uniqueness of $x$.
\end{proof}

As in Theorem \ref{th:nEA}, we can establish the following Theorem using ideas from Theorem \ref{th:k-perf}.

\begin{theorem}\label{th:nEAk}
If $E=\Gamma_{ea}(\mathbb Z\lex G,(k,0))$ is a $k$-perfect effect algebra, where $G$ is a directed monotone $\sigma$-complete po-group with interpolation, then every $n$-dimensional spectral resolution with the ordering property can be extended to a unique $n$-dimensional observable on $E$.
\end{theorem}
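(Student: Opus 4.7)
The plan is to mirror the argument of Theorem \ref{th:k-perf}, systematically replacing every appeal to an MV-algebra fact by its effect-algebra counterpart. Concretely, let $\mathbf t_1 \le \cdots \le \mathbf t_l$ be the characteristic points of $F$ listed in the order guaranteed by the ordering property, with $\mathbf t_j = (t^{j,0}_1,\ldots,t^{j,0}_n)$, and introduce for each $i=1,\ldots,n$ the difference operators $\Delta_i^{1}, \Delta_i^{2j}, \Delta_i^{2j+1}, \Delta_i^{2l+1}$ exactly as in the proof of Theorem \ref{th:k-perf}. For each $(j_1,\ldots,j_n) \in \{1,\ldots,2l+1\}^n$, put
$$
F_{j_1,\ldots,j_n} = \Delta_1^{j_1}\cdots \Delta_n^{j_n} F.
$$
Since Lemma \ref{le:claim} and Lemma \ref{lem:help1} are stated for effect algebras with interpolation (and $\mathbb Z \lex G$ has interpolation by \cite[Cor 2.12]{Goo}), each $F_{j_1,\ldots,j_n}$ is an $n$-dimensional pseudo spectral resolution on $E$, with no characteristic point unless $(j_1,\ldots,j_n) = (2k_1,\ldots,2k_n)$ for some $k_1,\ldots,k_n \in \{1,\ldots,l\}$.

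First I would treat the generic indices. For $(j_1,\ldots,j_n)$ not of the special form, $F_{j_1,\ldots,j_n}$ has range in the interval $[0,u_{j_1,\ldots,j_n}]$, where $u_{j_1,\ldots,j_n} := \bigvee F_{j_1,\ldots,j_n}(t_1,\ldots,t_n)$ lies strictly below a component on which no jump from one block $M_{k_i}$ to the next occurs; hence $[0,u_{j_1,\ldots,j_n}]$ is a Dedekind monotone $\sigma$-complete effect algebra with the Riesz decomposition property (inherited from the interpolation of $\mathbb Z \lex G$). The extension theorem \cite[Thm 5.2]{DvLa3} for Dedekind monotone $\sigma$-complete effect algebras with (RDP) then produces a unique $n$-dimensional observable $x_{j_1,\ldots,j_n}$ on $[0,u_{j_1,\ldots,j_n}]$ extending $F_{j_1,\ldots,j_n}$. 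For the special indices $(2k_1,\ldots,2k_n)$, $F_{2k_1,\ldots,2k_n}$ is concentrated at the single point $(t^{k_1,0}_1,\ldots,t^{k_n,0}_n)$, so the associated observable $x_{2k_1,\ldots,2k_n}$ is the obvious Dirac observable at that point, with mass $F_{2k_1,\ldots,2k_n}(\infty,\ldots,\infty)$.

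Next, define
$$
x(A) = \sum_{(j_1,\ldots,j_n) \in \{1,\ldots,2l+1\}^n} x_{j_1,\ldots,j_n}(A), \quad A \in \mathcal B(\mathbb R^n).
$$
Using the chain of inequalities (\ref{eq:chain}) together with the additivity and bi-distributivity properties (\ref{eq:dis1})--(\ref{eq:dis2}) of the $\Delta_i$ operators (which are purely po-group-theoretic and therefore transfer verbatim to the effect-algebra setting), I would verify the telescoping identity
$$
F(t_1,\ldots,t_n) = \sum_{(j_1,\ldots,j_n)} F_{j_1,\ldots,j_n}(t_1,\ldots,t_n), \quad (t_1,\ldots,t_n) \in \mathbb R^n,
$$
exactly as in the MV-algebra case. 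This immediately yields $x((-\infty,t_1)\times\cdots\times(-\infty,t_n)) = F(t_1,\ldots,t_n)$. Summing positive contributions also shows $x$ is well defined into $E$, monotone, $\sigma$-additive on disjoint unions, and normalized: $x(\mathbb R^n) = F(\infty,\ldots,\infty) = (k,0)$. Finally, uniqueness follows from the Sierpi\'nski Theorem \cite[Thm 1.1]{Kal} applied to the Dynkin system $\{A \in \mathcal B(\mathbb R^n) : x(A) = y(A)\}$, where $y$ is any other extension.

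The main obstacle I expect is ensuring that the intermediate intervals $[0, u_{j_1,\ldots,j_n}]$ on which the generic $F_{j_1,\ldots,j_n}$ live are genuinely Dedekind monotone $\sigma$-complete effect algebras with (RDP); this requires combining the interpolation property of $\mathbb Z \lex G$ with the observation (already implicitly used in Lemma \ref{lem:help1}) that suprema and infima of bounded monotone sequences inside a fixed component $\{j\}\times G$ coincide with those computed in $G$, so the monotone $\sigma$-completeness of $G$ transfers. Once this transfer is justified, the rest of the argument is a direct translation of the proof of Theorem \ref{th:k-perf}.
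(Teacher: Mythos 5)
Your proposal is correct and follows essentially the same route the paper intends: the paper proves this theorem by simply transferring the proof of Theorem \ref{th:k-perf} to the effect-algebra setting, which is exactly your plan, including the substitution of \cite[Thm 5.2]{DvLa3} for \cite[Thm 5.1]{DvLa3} and the use of Lemma \ref{le:claim} and the Sierpi\'nski theorem. Your closing remark on verifying that the intervals $[0,u_{j_1,\ldots,j_n}]$ inherit Dedekind monotone $\sigma$-completeness and (RDP) is a sensible elaboration of a point the paper leaves implicit.
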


Now, we present a result generalizing Theorem \ref{th:k-perf} and Theorem \ref{th:nEAk} where the increasing property is not more assumed. However, the proof depends in the ideas from Theorem \ref{th:k-perf}.

\begin{theorem}\label{th:k-general}
Let $F$ be an $n$-dimensional spectral resolution on $E=\Gamma_{ea}(H\lex G,(u,0))$, where $G$ is a directed Dedekind  monotone $\sigma$-complete po-group with interpolation and $(H,u)$ is a unital po-group with interpolation. Moreover, suppose $F$ has only finitely many characteristic points. Then $F$ can be extended to an $n$-dimensional observable $x$ on $E$ such that $F=F_x$.
\end{theorem}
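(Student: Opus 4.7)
The plan is to carry out the brick-decomposition programme sketched at the end of Section 5 (following Lemma \ref{le:obser} and definition (\ref{eq:F,B})), in the spirit of the multi-characteristic-point argument behind Theorem \ref{th:k-perf}. For each coordinate $i=1,\ldots,n$, I would collect the $i$-th coordinates of the (finitely many) characteristic points of $F$ into a finite set $\mathcal{S}_i=\{s_i^1<s_i^2<\cdots<s_i^{k_i}\}\subseteq\mathbb{R}$. With the extended-difference calculus of Lemma \ref{le:+,-} and formulas (\ref{eq:I})--(\ref{eq:III}), I would split each univariate increment as
\begin{equation*}
\Delta_i(-\infty,t_i)=\Delta_i(-\infty,\min\{s_i^1,t_i\})+\sum_{r=1}^{k_i}\bigl[\Delta_i(\min\{s_i^r,t_i\},\min\{(s_i^r)^+,t_i\})+\Delta_i(\min\{(s_i^r)^+,t_i\},\min\{s_i^{r+1},t_i\})\bigr],
\end{equation*}
with the convention $s_i^{k_i+1}:=+\infty$. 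Each summand is either an \emph{open-strip} piece between two consecutive characteristic-point coordinates or an \emph{atom} concentrated at one such coordinate.

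Expanding the product $\prod_{i=1}^n\Delta_i(-\infty,t_i)F$ using the commutativity (\ref{eq:II}) and the distributivity (\ref{eq:dis1})--(\ref{eq:dis2}) of the $\Delta_i$'s then yields a decomposition
\begin{equation*}
F(t_1,\ldots,t_n)=\sum_g F_g(t_1,\ldots,t_n),
\end{equation*}
where $g$ ranges over multi-indices selecting one piece per coordinate. A generalization of Lemma \ref{le:claim}, in which the ordering property is replaced by finiteness of the characteristic-point set combined with the finiteness property supplied by Lemma \ref{le:infty}, should show that every $F_g$ is an $n$-dimensional pseudo spectral resolution whose range lies in a single interval $[0,u_g]\subseteq E_{h_g}$ for some $h_g\in[0,u]_H$. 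If $g$ is the all-atom selector at a characteristic point $\bar s$ of $F$, then $F_g$ is concentrated at $\bar s$ and extends trivially to a Dirac-type observable $x_g$ on $[0,u_g]$; in every other case, $[0,u_g]$ is a Dedekind monotone $\sigma$-complete effect algebra with (RDP) (using Dedekind $\sigma$-completeness of $G$ together with \cite[Cor 2.12]{Goo}), so \cite[Thm 5.1, Thm 5.2]{DvLa3} produces a unique $n$-dimensional observable $x_g$ extending $F_g$ on $[0,u_g]$.

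Setting $x:=\sum_g x_g$, termwise additivity, monotonicity, and $\sigma$-continuity pass to $x$, and the decomposition above directly yields $x((-\infty,t_1)\times\cdots\times(-\infty,t_n))=F(t_1,\ldots,t_n)$; uniqueness follows, as in Theorems \ref{th:main} and \ref{th:k-perf}, from the Sierpi\'nski theorem \cite[Thm 1.1]{Kal}. The main obstacle I anticipate is the generalization of Lemma \ref{le:claim} in the absence of the ordering property: with characteristic points no longer sitting in a single chain, the clean sequence of inequalities (\ref{eq:chain}) is lost, and verifying the volume condition for each $F_g$ requires a more intricate case analysis that tracks, for every comparison rectangle and every coordinate $i$, which subinterval of the partition induced by $\mathcal{S}_i$ the endpoints fall into. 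This should reduce, via Lemma \ref{lem:help3} and Lemma \ref{le:infty}, to finitely many instances of the single-coordinate check performed in Lemma \ref{le:claim}, but the combinatorial bookkeeping is substantial and is the place where the new hypothesis (finitely many characteristic points) must be used most carefully.
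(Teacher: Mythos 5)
Your proposal follows essentially the same route as the paper's proof: partition each axis by the distinct coordinates of the finitely many characteristic points, split each $\Delta_i(-\infty,t_i)$ into the $2m_i+1$ pieces (initial segment, atoms at $t^0_{i,j}$, and open strips between them), expand the product by commutativity and distributivity of the $\Delta_i$'s, extend each resulting pseudo spectral resolution either via \cite[Thm 5.1]{DvLa3} or trivially as a Dirac-type observable, and conclude with the Sierpi\'nski theorem. The one obstacle you flag --- generalizing Lemma \ref{le:claim} without the ordering property --- is exactly the point the paper addresses by its Claim, noting that $F'_{i,2j}$ retains only those characteristic points of $F'$ lying on the hyperplane $x_i=t^0_{i,j}$.
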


\begin{proof}
The proof is rather a direct generalization of the ideas of the second proof of Theorem~\ref{th:main} and Theorem \ref{th:k-perf}. Let $\mathbf t_j=(t^{j,0}_1,\ldots,t^{j,0}_n)$, $j=1,\ldots,k$, be all the characteristic points of $F$. For each $i=1,\ldots,n$, let $m_i=|\{t^{1,0}_i,\ldots,t^{k,0}_i\}|$, that is $m_i$ is the number of mutually different $i$-th coordinates of the characteristic points.
We rewrite and order all the elements of the set $\{t^{1,0}_i,\ldots,t^{k,0}_i\}$ as follows
\begin{equation}\label{eq:order}
t^0_{i,1}<\cdots<t^0_{i,m_i}.
\end{equation}
Then
\begin{align*}
    F(t_1,\ldots,t_n)=&\prod_{i=1}^n \bigg[\Delta_i(-\infty,\min\{t_i,
t_{i,1}^0\})+\Delta_i(\min\{t_i, t_{i,1}^0\},\min\{t_i, t_{i,1}^{0+}\})+\cdots\\
&+ \Delta_i(\min\{t_i, t_{i,1}^{0+}\}, \min\{t_i, t_{i,2}^{0}\})+ \Delta_i(\min\{t_i, t_{i,2}^{0}\}, \min\{t_i, t_{i,2}^{0+}\})+\\
    &\cdots +\Delta_i(\min\{t_i, t_{i,m_i}^0\},\min\{t_i,
t_{i,m_1}^{0+}\})+\Delta_i(\min\{t_i, t_{i,m_i}^{0+}\},t_i)\bigg]F.
\end{align*}

According to Lemma \ref{le:claim}, we can establish the following claim:

\vspace{3mm}
{\it Claim. Let $F'$ be an $n$-dimensional pseudo spectral resolution on $E$. Let $\mathbf t_1,\ldots,\mathbf t_k$ be the system of characteristic points of $F'$. Use $m_i$ defined at the beginning of the proof and the orderings {\rm (\ref{eq:order})} for $i=1,\ldots,n.$

We define the following functions from $\mathbb R^n$ into $E$
\begin{align*}
F'_{i,1}(t_1,\ldots,t_n)&= \Delta_i(-\infty,\min\{t_i,t^0_{i,1}\})F'(t_1,\ldots,t_n),\\
F'_{i,2j}(t_1,\ldots,t_n)&=\Delta_i(\min\{t_i,t^0_{i,j}\}, \min\{t_i,t^{0+}_{i,j}\})F'(t_1,\ldots,t_n),\quad j=1,\ldots,m_i,\\
F'_{i,2j+1}(t_1,\ldots,t_n)&= \Delta_i(\min\{t_i,t^{0+}_{i,j}\},\min\{t_i,t^{0}_{i,j+1}\}) F'(t_1,\ldots,t_n),\quad j=1,\ldots,m_i-1,\\
F'_{i,2m_i+1}(t_1,\ldots,t_n)&= \Delta_i(\min\{t_i,t^{0+}_{i,m_i}\}, t_i)F'(t_1,\ldots,t_n)
\end{align*}
for all $(t_1,\ldots,t_n)\in \mathbb R^n$ and $i=1,\ldots,n$.
Then every $F'_{i,j}$ is an $n$-dimensional pseudo spectral resolution on $E$. In addition, each of $F'_{i,2j}$ has the same characteristic points as $F'$ ($F'_{i,2j}$shares with $F'$ only those characteristic  points, which lie on the  hyperplane $x_i=t^0_{i,j}$), on the other side, all other functions $F'_{i,j}$ have no characteristic point.}
\vspace{2mm}

Now, using the distributivity of $\Delta_i$'s operators, we get $F$ as a sum of $\prod_i(2m_i+1)$ $n$-dimensional pseudo spectral
resolutions that are generated by the claim. Similarly as in the proof of Theorem \ref{th:k-perf}, we can conclude that every summand either has no characteristic point, so due to \cite[Thm 5.1]{DvLa3} it can be extended to an $n$-dimensional observable on an appropriate interval in $E_0$, or it is concentrated in a unique characteristic point. In the second case it can be trivially extended to some $n$-dimensional observable.

As we can extend all the summands to observables, we can extend $F$ to an observable as well.
\end{proof}

We say that an effect algebra $E$ possesses the {\it Observable Existence Property} (OEP, for short) if every $n$-dimensional spectral resolution con $E$ can be extended to an observable and we
denote by $\mathcal{OEP}(EA)$ the class of effect algebras with (OEP).

 The class $\mathcal{OEP}(EA)$ contains these effect algebras (MV-algebras):

\begin{enumerate}
\item[(i)] $\sigma$-complete MV-algebras, \cite[Thm 3.2]{DvKu} for $n=1$.
\item[(ii)] $\sigma$-complete lattice effect algebras, \cite[Thm 3.5]{DvKu} for $n=1$.
\item[(iii)] Boolean $\sigma$-algebras, \cite[Thm 3.6]{DvKu} for $n=1$.
\item[(iv)] $\sigma$-orthocomplete orthomodular posets,
\cite[Thm 3.8]{DvKu} for $n=1$.
\item[(v)] Monotone $\sigma$-complete effect algebras with (RDP),
\cite[Thm 3.9]{DvKu} for $n=1$.
\item[(vi)] $\mathcal E(H)$, \cite[Thm 3.10]{DvKu} for $n=1$.
\item[(vii)] Effect-tribes, \cite[Thm 3.11]{DvKu} for $n=1$.
\item[(viii)] Monotone $\sigma$-complete effect algebras with RIP and DMP, \cite[Thm 4.3]{270} for $n=1$.
\item[(ix)] Every representable monotone $\sigma$-complete effect algebra, \cite[Thm 3.3]{270} for $n=1$.
\item[(x)] Every perfect MV-algebra, \cite[Thm 4.8]{DDL} for $n=1$.

\item[(xi)] Every $n$-perfect MV-algebra, \cite[Thm 3.8]{DvLa}, for $n=1$.

\item[(xii)] Every lexicographic effect algebra, \cite[Thm 3.8]{DvLa1}    for $n=1$.

\item[(xiii)] Every $\sigma$-complete MV-algebra and every Dedekind $\sigma$-complete effect algebra, \cite[Thm 5.1, Thm 5.2]{DvLa3} for general $n\ge 1$.

\item[(xiv)] Every perfect MV-algebra and every perfect effect algebra, Theorems \ref{th:main}--\ref{th:nEA} for $n\ge 1$.

\item[(xv)] Every $k$-perfect MV-algebra and every $k$-perfect effect algebra, Theorem \ref{th:k-general}.

\end{enumerate}

\section{Applications}%7

The aim of the section is to apply the previous results to show the existence of an $n$-dimensional meet joint observable of $n$ one-dimensional observables and the existence of a sum of $n$-dimensional observables on perfect MV-algebras.

\subsection{$n$-dimensional meet joint observables}

We show that given $n$ one-dimensional observables $x_1,\ldots, x_n$ on an appropriate lexicographic MV-algebra $M$, there is an $n$-dimensional observable $x$ on $M$ such that
\begin{equation}\label{eq:joint}
x((-\infty,s_1)\times\cdots\times (-\infty,s_n))=\bigwedge_{i=1}^n x_i((-\infty,s_i)),\quad s_1,\ldots,s_n \in \mathbb R.
\end{equation}

First we remind that the following forms of distributive laws hold also in lexicographic MV-algebras.

\begin{lemma}\label{le:5.1}
Let $\{x_i\colon i \in I\}$ be a system of elements of an MV-algebra $M$.

{\rm (1)} Let $\bigvee_{i\in I} x_i$ exist in $M$, and let $x$ be any element of $M$. Then $\bigvee_{i\in I}(x\wedge x_i)$ exists in $M$ and
$$%\begin{equation}\label{eq:2.1}
\bigvee_{i\in I}(x\wedge x_i)= x\wedge \bigvee_{i\in I}x_i.
$$%\end{equation}

{\rm(2)} If $\bigwedge_{i\in I} x_i$ exists in $M$, then for each $x\in M$, the element $\bigwedge_{i\in I}(x\vee x_i)$ exists in $M$ and
$$%\begin{equation}\label{eq:2.2}
\bigwedge_{i\in I}(x\vee x_i)= x\vee \bigwedge_{i\in I} x_i.
$$%\end{equation}
\end{lemma}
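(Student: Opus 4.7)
\medskip
\noindent
\textbf{Proof proposal.} The plan is to prove part (1) by transporting the problem into the underlying unital abelian $\ell$-group and then to deduce part (2) by MV-algebraic duality via the negation $\neg$.

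By Mundici's equivalence, there is a unital abelian $\ell$-group $(G,u)$ with $M=\Gamma(G,u)=[0,u]$, and the lattice operations of $M$ agree with those of $G$ restricted to $[0,u]$; moreover, suprema and infima of subsets of $M$ computed in $M$ coincide with those computed in $G$ whenever they exist. Consequently, it suffices to establish the following statement in any abelian $\ell$-group $G$: if $s:=\bigvee_{i\in I}x_i$ exists in $G$ and $x\in G$, then $\bigvee_{i\in I}(x\wedge x_i)$ exists and equals $x\wedge s$.

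The inequality $\bigvee_{i}(x\wedge x_i)\le x\wedge s$ is immediate from $x\wedge x_i\le x\wedge s$. For the reverse, I would use two standard $\ell$-group facts: translation invariance of suprema, which yields $\bigwedge_{i}(s-x_i)=s-\bigvee_i x_i=0$, together with the pointwise inequality
\[
(x\wedge s)-(x\wedge x_i)\le s-x_i\qquad(i\in I).
\]
After adding $x_i$ to both sides this reduces to $(x+x_i)\wedge(s+x_i)\le(x+s)\wedge(x_i+s)$, which holds because $s+x_i$ appears on both sides and $x+x_i\le x+s$ (as $x_i\le s$). Now, for any upper bound $y$ of $\{x\wedge x_i\}_{i}$ we have $(x\wedge s)-y\le(x\wedge s)-(x\wedge x_i)\le s-x_i$ for every $i$, whence $(x\wedge s)-y\le\bigwedge_i(s-x_i)=0$, i.e.\ $y\ge x\wedge s$. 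This proves (1) and, in particular, the existence of $\bigvee_{i}(x\wedge x_i)$ in $G$; since the bound $x\wedge s$ lies in $[0,u]$, the supremum is also attained in $M$.

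Part (2) follows from (1) by applying $\neg$ and De Morgan in $M$. If $t:=\bigwedge_i x_i$ exists in $M$, then $\bigvee_i\neg x_i=\neg t$ exists, and (1) applied to $\neg x$ and $\{\neg x_i\}_i$ gives $\neg x\wedge\bigvee_i\neg x_i=\bigvee_i(\neg x\wedge\neg x_i)=\bigvee_i\neg(x\vee x_i)$; negating both sides and using De Morgan yields $x\vee\bigwedge_i x_i=\bigwedge_i(x\vee x_i)$, which is the desired identity. The only non-bookkeeping step is the elementary $\ell$-group inequality $(x\wedge s)-(x\wedge x_i)\le s-x_i$ above, and I expect this to be the main (mild) obstacle; everything else is routine once the problem is transferred via $\Gamma$ and the computation is dualized by $\neg$.
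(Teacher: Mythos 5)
The paper gives no proof of this lemma at all: it is stated as a reminder of known infinite distributive laws (they hold in every MV-algebra, being inherited from the underlying $\ell$-group via Mundici's functor $\Gamma$), so there is nothing in the source to compare your argument against. Your proposal supplies a correct and complete proof of exactly the standard kind. The key inequality $(x\wedge s)-(x\wedge x_i)\le s-x_i$ does hold: after translating it becomes $(x+x_i)\wedge(s+x_i)\le(x+s)\wedge(x_i+s)$, which follows since the left side is below both $s+x_i$ and $x+x_i\le x+s$; combined with $\bigwedge_i(s-x_i)=0$ this forces every upper bound $y$ of $\{x\wedge x_i\}_i$ to satisfy $y\ge x\wedge s$. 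The transfer of suprema between $M=\Gamma(G,u)$ and $G$ is also legitimate (for a nonempty family in $[0,u]$, any $G$-upper bound $w$ yields the $[0,u]$-upper bound $w\wedge u$, so the two suprema coincide), and the dualization of part (2) via the order-reversing involution $\neg$ together with De Morgan is routine. An equivalent, slightly shorter route to the reverse inequality uses the identity $a+b=(a\vee b)+(a\wedge b)$ to get $x\wedge x_i\ge x+x_i-(x\vee s)$ and then takes suprema over $i$, but your version is just as good.
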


\begin{theorem}\label{th:5.2}
Let $x_1,\ldots,x_n$ be one-dimensional observables on a $\Rad$-Dedekind $\sigma$-complete perfect MV-algebra. Then there is a unique $n$-dimensional observable $x$ on $M$ such that {\rm (\ref{eq:joint})} holds.
\end{theorem}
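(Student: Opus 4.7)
The plan is to define $F\colon\mathbb{R}^n\to M$ by
$$F(s_1,\ldots,s_n):=\bigwedge_{i=1}^n F_{x_i}(s_i),$$
where $F_{x_i}(s):=x_i((-\infty,s))$ is the one-dimensional spectral resolution associated with $x_i$, and then to verify that $F$ is an $n$-dimensional spectral resolution in the sense of Definition \ref{de:n-dim}. Once this is done, Theorem \ref{th:main} will supply a unique $n$-dimensional observable $x$ on $M$ with $x((-\infty,s_1)\times\cdots\times(-\infty,s_n))=F(s_1,\ldots,s_n)$, which is exactly (\ref{eq:joint}); uniqueness of $x$ follows from the Sierpi\'nski Theorem as in the proof of Theorem \ref{th:main}. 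The meet appearing in the definition of $F$ is well defined because every MV-algebra is a distributive lattice, so finite meets always exist in $M$ without any completeness assumption.

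To verify the axioms of Definition \ref{de:n-dim}, I would proceed as follows. For the volume condition (i), working inside the ambient $\ell$-group $\mathbb{Z}\lex G$ and using the $\ell$-group identity $a\wedge b=a-(a-b)^+$ together with the monotonicity of $z\mapsto z^+$, a direct computation rewrites $V(F,\prod_i[a_i,b_i))$ as a sum of non-negative terms; this is the algebraic counterpart of the classical fact that the upper Fr\'echet--Hoeffding bound $(s_1,\ldots,s_n)\mapsto\min_i F_i(s_i)$ is a genuine joint distribution function (the comonotonic coupling). For the continuity condition (ii) and the vanishing condition (iii), I would iteratively apply Lemma \ref{le:5.1}(1) and (2): the former gives $\bigvee_{s<t}(a\wedge F_{x_i}(s))=a\wedge F_{x_i}(t)$, which combined with the left-continuity $\bigvee_{s<t}F_{x_i}(s)=F_{x_i}(t)$ of each one-dimensional spectral resolution yields (ii); for (iii), the absorbing property of meets and $\bigwedge_{t_i}F_{x_i}(t_i)=0$ force $\bigwedge_{t_i}F(s_1,\ldots,t_i,\ldots,s_n)=0$. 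Condition (iv), $\bigvee F=1$, follows analogously from $\bigvee_{s}F_{x_i}(s)=1$ for each $i$ together with distributivity of $\wedge$ over directed suprema (Lemma \ref{le:5.1}(1)).

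For the key condition (v), let $t^0_i\in\mathbb{R}$ be the characteristic point of $F_{x_i}$ for $i=1,\ldots,n$. Since $F_{x_i}(s)\in\Rad(M)$ for $s\le t^0_i$ and $F_{x_i}(s)\in\Rad(M)'$ for $s>t^0_i$, and since any meet of a finite family containing an element of $\Rad(M)$ lies in $\Rad(M)$, we have $F(s_1,\ldots,s_n)\in\Rad(M)'$ if and only if $s_i>t^0_i$ for every $i$. Hence $(t^0_1,\ldots,t^0_n)$ is the unique characteristic point of $F$. Applying Lemma \ref{le:5.1} once more, the infimum in (v) factors as
$$a_B=\bigwedge\{F(s_1,\ldots,s_n)\colon (s_1,\ldots,s_n)\gg(t^0_1,\ldots,t^0_n)\}=\bigwedge_{i=1}^n\bigwedge_{s_i>t^0_i}F_{x_i}(s_i),$$
and each inner infimum exists in $M$ because it is the infimum of a monotone net lying in $\Rad(M)'$, which exists by the $\Rad$-Dedekind $\sigma$-completeness hypothesis on $M$; moreover $a_B\in\Rad(M)'$, as required.

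The main technical obstacle will be the bookkeeping required to prove the volume condition (i): one must carefully expand the alternating sum $\sum_{\delta\in\{0,1\}^n}(-1)^{\pi(\delta)}F(\ldots)$, translate each meet into a difference via $a\wedge b=a-(a-b)^+$, and iteratively invoke monotonicity of $(\cdot)^+$ in each coordinate to cancel terms down to a manifestly non-negative expression (the case $n=2$ already illustrates the pattern and extends inductively). Once (i)--(v) are in hand, Theorem \ref{th:main} furnishes the desired $n$-dimensional observable $x$ extending $F$, and uniqueness follows because any two observables agreeing on the $\pi$-system $\{(-\infty,s_1)\times\cdots\times(-\infty,s_n)\colon s_1,\ldots,s_n\in\mathbb{R}\}$ must coincide on all of $\mathcal{B}(\mathbb{R}^n)$ by the Sierpi\'nski Theorem \cite[Thm 1.1]{Kal}.
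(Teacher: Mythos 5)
Your overall strategy coincides with the paper's: define $F(s_1,\ldots,s_n)=\bigwedge_{i=1}^nF_{x_i}(s_i)$, check that $F$ is an $n$-dimensional spectral resolution in the sense of Definition \ref{de:n-dim}, and then invoke Theorem \ref{th:main} plus the Sierpi\'nski Theorem. Conditions (ii)--(iv) via Lemma \ref{le:5.1} and condition (v) via the componentwise characteristic points are handled essentially as in the paper (for (v) you could avoid appealing to $\Rad$-Dedekind $\sigma$-completeness for the inner infima: $\bigwedge_{s_i>t^0_i}F_{x_i}(s_i)=x_i((-\infty,t^0_i])$ exists simply because each $x_i$ is already an observable; the paper's remaining work there is to justify that the $n$-fold infimum over the product index set exists and factors, which it does via countability of the rationals and a lemma from Part I --- your factorization needs the same distributivity argument spelled out). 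The one genuine divergence is the volume condition, which is also the only hard step and the only one you leave as a sketch. You propose a direct computation in the ambient $\ell$-group $\mathbb Z\lex G$ using $a\wedge b=a-(a-b)^+$ and monotonicity of $z\mapsto z^+$; this is workable (for $n=2$ it reduces to the submodularity-type inequality $(x\vee c)+(y\vee d)\le(y\vee c)+(x\vee d)$ for $x\le y$, $c\le d$, and an induction handles general $n$), but the bookkeeping you defer is precisely where the effort lies. The paper instead reduces to linearly ordered perfect MV-algebras via the Di Nola--Lettieri representation \cite[Thm 5.1]{DiLe} --- legitimate because finite meets and the alternating sum in the volume condition are computed coordinatewise in a subdirect representation --- and then quotes the already-proved claim of \cite[Thm 6.2]{DvLa3}, where the linear order turns the whole verification into a case check with the explicit formula (\ref{eq:Delta'}). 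Your route is self-contained and avoids the representation theorem; the paper's route is shorter and buys an explicit formula for the increments. Either way the proof is complete only once that computation is actually carried out, so if you keep your approach you must supply the induction you allude to.
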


\begin{proof}
Let $F_i(s)=x_i((-\infty,s))$, $s\in \mathbb R$, be a one-dimensional spectral resolution corresponding to the observable $x_i$ for $i=1,\ldots,n$. We are claiming that $F$ is an $n$-dimensional spectral resolution on $M$.

Clearly, $F$, defined by $F(s_1,\ldots,s_n)=\bigwedge_{i=1}^nF(s_i)$, $s_1,\ldots,s_n \in \mathbb R$, is monotone and it satisfies (3.4)--(3.6). We show that it satisfies also the volume condition. Due to \cite[Thm 5.1]{DiLe}, it is sufficient to prove the volume condition for linearly ordered perfect MV-algebra. The following claim was established in \cite[Thm 6.2]{DvLa3}:

\vspace{3mm}
{\it Claim. Let $F_1,\ldots,F_n$, $n\ge 2$, be functions from $\mathbb R$ into a linearly ordered MV-algebra $M$ such that each $F_i$ satisfies the volume condition.  Then $F(s_1,\ldots,s_n)=\bigwedge_{i=1}^n F_i(s_i)$, $s_1,\ldots,s_n\in \mathbb R$, satisfies the volume condition. If, given $A=[a_1,b_1)\times\cdots\times [a_n,b_n)$, we can assume that $F_1(a_1)\le F_2(a_2)\le \cdots\le F_n(a_n)$, then}
\begin{equation}\label{eq:Delta'}
\Delta_1(a_1,b_1)\cdots\Delta_n(a_n,b_n)F(s_1,\ldots,s_n)= \bigwedge_{i=1}^{n-1}(F_i(b_i) \wedge F_n(b_n))- \bigwedge_{i=1}^{n-1}(F_i(b_i)\wedge F_n(a_n)).
\end{equation}
Therefore, $F$ satisfies the volume condition.

Now, we show that $F$ satisfies (v) of Definition \ref{de:n-dim}. For every $i=1,\ldots,n$, let $T^i_1 = \{s\in \mathbb R\colon F_i(s)\in M_1\}$, and let $a^i_1=\bigwedge\{F_i(s)\colon s \in T^i_1\}$. Put $T_1=\{(s_1,\ldots,s_n)\in \mathbb R^n\colon F(s_1,\ldots,s_n)\in M_1\}$.
Then every $a^i_1\in M_1$. Since $F(s_1,\ldots,s_n)\in M_1$ iff $F_i(s_i)\in M_1$ for each $i=1,\ldots,n$, then the element $a=\bigwedge_{i=1}^n a^i_1\in M_1$ and it is a lower bound from $M_1$ for $\{F(s_1,\ldots,s_n)\colon F(s_1,\ldots,s_n)\in M_1\}$. In view of the countability and density of rational numbers and applying Lemma 2.1(2), we see that the element $\bigwedge\{F(s_1,\ldots,s_n)\colon F(s_1,\ldots,s_n)\in M_1\}$ exists in $M$ and it belongs to $M_1$.

Therefore, $F$ is an $n$-dimensional spectral resolution. Due to Theorem \ref{th:main}, there is a unique $n$-dimensional observable $x$ on $M$ such that (\ref{eq:joint}) holds which finishes the proof.
\end{proof}

The observable $x$ from Theorem \ref{th:5.2} is said to be an {\it $n$-dimensional meet joint observable} of $x_1,\ldots, x_n$. We note that using the Sierpi\'nski Theorem, we can show

\begin{equation}\label{eq:joint1}
x(\pi_i^{-1}(A))=x_i(A),\quad A \in \mathcal B(\mathbb R),\ i=1,\ldots,n,
\end{equation}
where $\pi_i:\mathbb R^n \to \mathbb R$ is the $i$-th projection.

In addition, from \eqref{eq:joint1}, we can prove

\begin{equation}\label{eq:joint2}
x(A_1\times\cdots\times A_n)\le \bigwedge_{i=1}^n x_i(A_i),\quad A_1,\ldots, A_n \in \mathcal B(\mathbb R),
\end{equation}
and, in general, it can happen that in \eqref{eq:joint2} we have strict inequality.  Indeed, let $x$ and $y$ be one-dimensional observables on $\Gamma(\mathbb Z\lex \mathbb R,(1,0))$ such that $x(\{2\})=(0,3)$, $x(\{3\})=(1,-3)$, $y(\{1\})=(0,4)$ and $y(\{5\})=(1,-4)$. For the two-dimensional meet joint observable $z$ determined by one-dimensional spectral resolutions $F_x$ and $F_y$, we can show that that in (\ref{eq:joint2}) can be proper inequalities. Otherwise,  we have $(1,0)=(x(\{2\})\wedge y(\{1\}))+ (x(\{2\})\wedge y(\{5\}))+ (x(\{3\})\wedge y(\{1\})) + (x(\{3\})\wedge y(\{5\}))= (0,3)\wedge (0,4) + (0,3)\wedge (1,-4) + (1,-3)\wedge (0,4) + (1,-3)\wedge (1,-4)= (0,3) +(0,3) +(0,4) + (1,-4) = (1,6)$, a contradiction.

\subsection{Sum of $n$-dimensional observables}

Let $(\Omega,\mathcal S)$ be a measurable space and let $f,g:\Omega \to \mathbb R$ be $\mathcal S$-measurable functions. It is well-known that $f+g$ is also $\mathcal S$-measurable. The proof of this fact is based on the simple property
\begin{equation}\label{eq:sum1}
\{\omega \in \Omega: f(\omega)+g(\omega)<t\}= \bigcup_{r \in \mathbb Q}(\{\omega \in \Omega: f(\omega)<r\} \cap \{\omega \in \Omega: g(\omega)<t-r\})
\end{equation}
which holds for each $t \in \mathbb R$, where $\mathbb Q $ is the set of rational numbers, see e.g. \cite[Thm 19.A]{Hal}. This equality was used in \cite[Thm 4.5]{DvLa} to define the sum of two one-dimensional observables on a $k$-perfect MV-algebra $M=\Gamma(\mathbb Z \lex G,(k,0))$, where $G$ is a Dedekind $\sigma$-complete $\ell$-group. More precisely, if $x$ and $y$ are one-dimensional observables on $M$ with spectral resolutions $F_x(t)=x((-\infty,t))$, $F_y(t)=y((-\infty, t))$, $t\in \mathbb R$, we define
$$
F_{x+y}(t):= \bigvee_{r \in \mathbb Q}(F_x(r)\wedge F_y(t-r)),\quad t \in \mathbb R.
$$
Then $F_{x+y}$ is a spectral resolution of an observable $z=x+y$, called the sum of $x$ and $y$. Moreover, $x+y=y+x$.

Similarly, if $T_1,T_2:\Omega \to \mathbb R^n$ are two $n$-dimensional measurable vectors, then $T_1(\omega)=(f_1(\omega),\ldots, f_n(\omega))$ and $T_2(\omega)= (g_1(\omega),\ldots, g_n(\omega))$, $\omega \in \Omega$, for unique $\mathcal S$-measurable functions $f_1,\ldots,f_n,g_1,\ldots,g_n$ on $\Omega$. The sum $T=T_1+T_2$ is $\mathcal S$-measurable thanks to a generalization of (\ref{eq:sum1})
\begin{align*}
\{\omega \colon  T_1(\omega)+T_2(\omega)\in (-\infty,t_1)\times \cdots \times (-\infty,t_n)\}&= \bigcap_{i=1}^n\bigcup_{r_i\in \mathbb Q}\big(\{\omega \colon f_i(\omega)<r_i\}\cap \{\omega\colon g_i(\omega)< t_i-r_i\}\big)\\
&= \bigcap_{i=1}^n \{\omega \colon f_i(\omega)+g_i(\omega)<t_i\},
\end{align*}
which holds for all $t_1,\ldots,t_n\in \mathbb R$. Inspired by this we have the following result.

\begin{theorem}\label{th:sum2}
Let $z_1,z_2$ be $n$-dimensional observables on a perfect MV-algebra $M=\Gamma(\mathbb Z \lex G,(1,0))$, where $G$ is a Dedekind $\sigma$-complete $\ell$-group. Let $\pi_i:\mathbb R^n \to \mathbb R$ be the $i$-th projection, $i=1,\ldots,n$. Define one-dimensional observables $x_i(A)=z_1(\pi^{-1}_i(A))$ and $y_i(A)=z_2(\pi^{-1}_i(A))$ for $A \in \mathcal B(\mathbb R)$ and $i=1,\ldots,n$.
Then there is an $n$-dimensional observable $z=z_1+z_2$ such that
$$
F_{z_1+z_2}(t_1,\ldots,t_n):= \bigwedge_{i=1}^n F_{x_i+y_i}(t_i),\quad t_1,\ldots,t_n \in \mathbb R,
$$
is an $n$-dimensional spectral resolution on $M$ which corresponds to $z$.

If $\mathcal O(M)_n$ is the system of $n$-dimensional observables on $M$, then $\mathcal O(M)_n$ is a commutative semigroup with respect to the binary operation $+$ on $\mathcal O(M)_n$ with the neutral element $o:\mathcal B(\mathbb R^n) \to M$ which is defined by $o(A)=1$ whenever the null vector $(0,\ldots,0)$ belongs to $A$, otherwise $o(A)=0$.
\end{theorem}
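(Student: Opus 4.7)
The plan is to combine two tools from earlier sections: the one-dimensional sum of observables from \cite[Thm 4.5]{DvLa} applied to the marginals of $z_1$ and $z_2$, and the $n$-dimensional meet joint observable construction of Theorem~\ref{th:5.2}. Concretely, I would \emph{define} $z_1+z_2$ as the meet joint of the one-dimensional sums $x_i+y_i$ of marginals, and then verify the semigroup axioms by reducing them coordinate by coordinate to the one-dimensional theory.

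First, each marginal $x_i=z_1\circ\pi_i^{-1}$ is a one-dimensional observable on $M$, by $\sigma$-additivity and monotonicity of $z_1$; similarly for $y_i$. Because $M=\Gamma(\mathbb Z\lex G,(1,0))$ is perfect with $\Rad(M)\cong G^+$ Dedekind $\sigma$-complete, \cite[Thm 4.5]{DvLa} produces for each $i$ a one-dimensional observable $x_i+y_i$ on $M$ with spectral resolution
$$F_{x_i+y_i}(t)=\bigvee_{r\in\mathbb Q}\bigl(F_{x_i}(r)\wedge F_{y_i}(t-r)\bigr).$$
Now apply Theorem~\ref{th:5.2} to the $n$-tuple $(x_1+y_1,\ldots,x_n+y_n)$: this simultaneously verifies that $\bigwedge_{i=1}^n F_{x_i+y_i}(t_i)$ is a bona fide $n$-dimensional spectral resolution on $M$ and delivers a unique $n$-dimensional observable $z$ with $F_z(t_1,\ldots,t_n)=\bigwedge_i F_{x_i+y_i}(t_i)$. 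Set $z_1+z_2:=z$; the uniqueness clause of Theorem~\ref{th:main} makes this definition unambiguous.

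For the algebraic structure of $(\mathcal O(M)_n,+)$, commutativity is immediate from one-dimensional commutativity $x_i+y_i=y_i+x_i$ and uniqueness. Associativity follows by the same pattern: writing $v_i$ for the marginals of a third observable $z_3$, the spectral resolutions of $(z_1+z_2)+z_3$ and $z_1+(z_2+z_3)$ both equal $\bigwedge_i F_{(x_i+y_i)+v_i}(t_i)=\bigwedge_i F_{x_i+(y_i+v_i)}(t_i)$, reducing to associativity of the one-dimensional sum of observables (which follows from $(f+g)+h=f+(g+h)$ for measurable functions on a representing space through~\eqref{eq:sum1}). For the neutral element, a direct computation gives $F_{o_i}(t)=0$ for $t\le 0$ and $F_{o_i}(t)=1$ for $t>0$; combining this with left-continuity of $F_{x_i}$ (condition~(ii) of Definition~\ref{de:n-dim}) yields
$$F_{x_i+o_i}(t)=\bigvee_{r<t}F_{x_i}(r)=F_{x_i}(t),$$
so that $F_{z_1+o}(t_1,\ldots,t_n)=\bigwedge_i F_{x_i}(t_i)$.

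The most delicate point I foresee is the concluding identity
$$F_{z_1}(t_1,\ldots,t_n)=\bigwedge_{i=1}^n F_{x_i}(t_i),$$
needed to conclude $z_1+o=z_1$. This ``joint-from-marginals'' equality is not automatic for an arbitrary observable on an MV-algebra; its proof hinges on the perfect lexicographic structure of $\mathbb Z\lex G$. Specifically, I would distinguish the cases $F_{z_1}(t_1,\ldots,t_n)\in\Rad(M)$ and $F_{z_1}(t_1,\ldots,t_n)\in\Rad(M)'$ and exploit the dichotomy that $(a,b)\wedge(a',b')=(a,b)$ whenever $a<a'$ in $\mathbb Z\lex G$, together with the distributivity of $\wedge$ over countable suprema furnished by Lemma~\ref{le:5.1} and the density of rationals used in \cite[Thm 4.5]{DvLa}. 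Once this identity is established the remaining monoid axioms follow clerically, completing the verification that $(\mathcal O(M)_n,+,o)$ is a commutative monoid.
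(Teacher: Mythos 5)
Your construction of $z_1+z_2$, and your arguments for commutativity and associativity, follow essentially the same route as the paper: form the one-dimensional sums $x_i+y_i$ of the marginals via \cite[Thm 4.5]{DvLa} and then take their meet joint observable from Theorem~\ref{th:5.2}. One point you pass over in the associativity step: to rewrite the spectral resolution of $(z_1+z_2)+z_3$ as $\bigwedge_i F_{(x_i+y_i)+v_i}(t_i)$ you need to know that the $i$-th marginal of $z_1+z_2$ is exactly $x_i+y_i$; the paper isolates this as a separate Claim and proves it by evaluating $F_{z_1+z_2}(\infty,\ldots,t_i,\ldots,\infty)$ and using $F_{x_j+y_j}(\infty)=1$. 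This is easy to supply, but it is the actual content of the associativity argument and should be stated.

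The genuine gap is in the neutral-element step, and you have located it precisely: your (correct) computation $F_{x_i+o_i}(t)=\bigvee_{r<t}F_{x_i}(r)=F_{x_i}(t)$ reduces $z_1+o=z_1$ to the identity $F_{z_1}(t_1,\ldots,t_n)=\bigwedge_{i=1}^n F_{x_i}(t_i)$. But this identity is \emph{false} for a general $z_1\in\mathcal O(M)_n$, so the case analysis you propose on $\Rad(M)$ versus $\Rad(M)'$ cannot succeed. For instance, on $M=\Gamma(\mathbb Z\lex\mathbb R,(1,0))$ let $z$ be the two-dimensional observable concentrated on the two points $p=(0,2)$ and $q=(2,0)$ of $\mathbb R^2$ with $z(\{p\})=(0,1)$ and $z(\{q\})=(1,-1)$; then $F_z(1,1)=0$ while $F_{x_1}(1)\wedge F_{x_2}(1)=(0,1)\wedge(1,-1)=(0,1)\neq 0$. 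This is the same phenomenon as the strict inequality in \eqref{eq:joint2} that the paper itself exhibits right after Theorem~\ref{th:5.2}. What your computation actually shows is that $z+o$ is the meet joint observable of the marginals of $z$, which equals $z$ only when $z$ is itself a meet joint observable (as every element of the image of $+$ is, by \eqref{eq:joint1}). The paper's own proof dismisses this step with ``evidently'' and does not address the difficulty either; the defensible conclusion is that $(\mathcal O(M)_n,+)$ is a commutative semigroup on which $o$ is neutral only for meet joint observables, or equivalently that $z+o$ and $z$ merely share all their one-dimensional marginals. Do not spend effort trying to derive the joint-from-marginals identity from the lexicographic structure --- it fails already for two-point observables.
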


\begin{proof}
Due to \cite[Thm 4.5]{DvLa}, every $F_{x_i+y_i}$ is a one-dimensional spectral resolution on $M$ which is the sum of one-dimensional observables $x_i$ and $y_i$ Applying Theorem \ref{th:5.2}, we see that $F_{z_1+z_2}$ is an $n$-dimensional spectral resolution corresponding to an $n$-dimensional meet joint observable of $x_1+y_1,\ldots,x_n+y_n$.

Since $F_{x_i+y_i} = F_{y_i+x_i}$, we see that $F_{z_1+z_2}=F_{z_2+z_1}$ and the operation $+$ is commutative.

We show that $+$ is associative. Let $z_1,z_2,z_3 \in \mathcal O(M)_n$. First, we establish a claim:

\vspace{3mm}
\noindent
{\it Claim.} {\it Let $z_{j,i}(A)=z_j(\pi^{-1}_i(A))$, $(z_1+z_2)_i(A)=(z_1+z_2)(\pi^{-1}_i(A))$, $A \in \mathcal B(\mathbb R)$, $j=1,2$, $i=1,\ldots,n$, then}

$$
(z_1+z_2)_i = z_{1,i}+z_{2,i}, \quad i=1,\ldots,n.
$$
\vspace{2mm}

Indeed, let $i=1,\ldots,n$, be fixed and $t_i\in \mathbb R$. We have
\begin{align*}
F_{(z_1+z_2)_i}(t_i)&=(z_1+z_2)_i((-\infty,t_i))=(z_1+z_2)(\mathbb R\times \cdots\times(-\infty,t_i)\times \cdots \times\mathbb R)\\
&= F_{z_1+z_2}(\infty,\ldots,t_i,\ldots,\infty)= F_{z_{1,i}+z_{2,i}}(t_i),
\end{align*}
which proves the claim.

The associativity of $+$: Let $t_1,\ldots,t_n \in \mathbb R$. Using the claim and associativity of $+$ for one-dimensional observables, we have
\begin{align*}
F_{(z_1+z_2)+z_3}(t_1,\ldots,t_n)&= \bigwedge_{i=1}^n F_{(z_1+z_2)_i+z_{3,i}}(t_i) =\bigwedge_{i=1}^n F_{(z_{1,i}+z_{2,i})+z_{3,i}}(t_i)\\
&= \bigwedge_{i=1}^n F_{z_{1,i}+(z_{2,i}+z_{3,i})}(t_i)
= \bigwedge_{i=1}^n F_{z_{1,i}+(z_2+z_3)_i}(t_i)\\
&=F_{z_1+(z_2+z_3)}(t_1,\ldots,t_n).
\end{align*}
Whence, $(z_1+z_2)+z_3=z_1+(z_2+z_3)$.

The null observable $o$ is evidently a neutral element of the semigroup $\mathcal O(M)_n$.
\end{proof}

\section{Conclusion}%8

Any measurement of $n$ observables in quantum structures is modeled by an $n$-dimensional observable which is a kind of a $\sigma$-homomorphism from the Borel $\sigma$-algebra $\mathcal B(\mathbb R^n)$ into a quantum structure which is a monotone $\sigma$-complete effect algebra or a $\sigma$-complete MV-algebra. Every observable $x$ restricted to $n$-dimensional infinite intervals of the form $(-\infty,t_1)\times\cdots\times (-\infty,t_n)$, $t_1,\ldots,t_n\in \mathbb R$, defines an $n$-dimensional spectral resolution which is characterized as a mapping from $\mathbb R^n$ into the quantum structure that is monotone, with non-negative increments, and is going to $0$ if one variable goes to $-\infty$ and going to $1$ if all variables go to $+\infty$.

In our case we are concentrated to perfect and $k$-perfect MV-algebras and $k$-perfect effect algebras, and generally to lexicographic quantum structures. Our main ask was to show when we have a one-to-one relationship between $n$-dimensional observables and $n$-dimensional spectral resolutions. In such a case, it was necessary to strengthen the definition of an $n$-dimensional spectral resolution. This model entails a more sophisticated analysis of the problem than in the case of $\sigma$-complete MV-algebras or monotone $\sigma$-complete effect algebras with (RDP), when $k>1$ because in such a case there are appearing more characteristic points than in the case of $n=1$ or in the case $n>1$ and $k>1$.

Therefore, Section 4 is devoted to analysis of characteristic points namely for a two-dimensional case, when it is shown that we have only finitely many characteristic points, see Theorem 4.4. In Theorem 4.6 it was shown that for a general lexicographic MV-algebra a two-dimensional spectral resolution extendable to a two-dimensional observable has to have finitely many characteristic points. On the other hand, every two-dimensional spectral resolution satisfying (3.3)--(3.7) has at most countably many characteristic points, see Theorem 4.8. This is the main content of Part I.

The main body of the paper is in present Part II. We started with  interesting Lemma \ref{le:infty} which shows that elements of the form $F(\infty,\ldots,\infty,t_{j+1},\ldots,t_n)$ are defined in the quantum structure for all $t_{j+1},\ldots,t_n\in \mathbb R$.
In Theorems \ref{th:TwoMV}--\ref{th:TwoEA}, we established that in the case of perfect MV-algebras and perfect effect algebras, there is a one-to-one correspondence between two-dimensional observables and two-dimensional spectral resolutions. The generalization for $n\ge 1$ is given in Theorems \ref{th:main}--\ref{th:k-general}.
For $k>1$ analogous results have been established for observables with the increasing property, see Theorems  \ref{th:k-perf}--\ref{th:nEAk}. Finally, we have applied our results to show that there is an $n$-dimensional meet joint observable of $n$ one-dimensional observables on a perfect MV-algebra, see Theorem \ref{th:5.2}, and we have showed how to define a sum of two $n$-dimensional observables on perfect MV-algebras, Theorem \ref{th:sum2}.

The paper is illustrated by some interesting examples.

We note that still there is open a complete characterization of characteristic points for $n$-dimensional spectral resolutions in $k$-perfect MV-algebras.


\begin{thebibliography}{DvVe2}

%\bibitem[BiNe]{BiNe} G. Birkhoff, J. von Neumann, {\it  The logic of quantum mechanics}, Ann. Math. {\bf 37} (1936), 823--843.

%\bibitem[BaSh]{BaSh} R.G. Bartle, D.R. Sherbert, {\it ``Introduction to Real Analysis"}, 3rd ed. New York, J. Wiley, 2000.


%\bibitem[Cat]{Cat} D. Catlin, {\it Spectral theory in quantum logics}, Inter. J. Theor. Phys. {\bf 1} (1968), 285--297.

%\bibitem[Cha]{Cha} C.C. Chang, {\it  Algebraic analysis of many-valued logics}, Trans. Amer. Math. Soc. {\bf 88} (1958), 467--490.

%\bibitem[CDM]{CDM} R. Cignoli, I.M.L. D'Ottaviano, D. Mundici, {\it ``Algebraic Foundations of Many-valued Reasoning"}, Kluwer Academic Publ., Dordrecht, 2000.

%\bibitem[DFL]{DFL} D. Diaconescu, T. Flaminio, I. Leu\c{s}tean, {\it Lexicographic MV-algebras and lexicographic states}, Fuzzy Sets and System {\bf 244}  (2014), 63--85. DOI 10.1016/j.fss.2014.02.010

\bibitem[DDL]{DDL}
A. Di Nola, A. Dvure\v censkij, G. Lenzi, {\it Observables on perfect MV-algebras}, Fuzzy Sets and Systems {\bf 369} (2019), 57--81. https://doi.org/10.1016/j.fss.2018.11.018

%\bibitem[DiGr]{DiGr} A. Di Nola, R. Grigolia, {\it G\"odel spaces and perfect MV-algebras}, J. Appl. Log. {\bf 13} (2015), 270--284.

\bibitem[DiLe]{DiLe}
A. Di Nola, A. Lettieri, {\it Perfect MV-algebras are categorically
equivalent to abelian $\ell$-groups}, Studia Logica {\bf 53} (1994),
417--432.


%\bibitem[Dvu]{Dvu} A. Dvure\v censkij,    {\it Loomis--Sikorski theorem  for $\sigma$-complete MV-algebras and $\ell$-groups}, J. Austral. Math. Soc. Ser. A {\bf 68}  (2000), 261--277.

\bibitem[Dvu1]{Dvu1}
A. Dvure\v censkij, {\it  Perfect effect algebras are
categorically equivalent with Abelian interpolation po-groups},
J. Austral. Math. Soc. {\bf 82} (2007), 183--207.

\bibitem[Dvu2]{270}
A. Dvure\v censkij, {\it Representable effect algebras and observables}, Inter. J. Theor. Phys. {\bf 53} (2014),  2855--2866. DOI: 10.1007/s10773-014-2083-z

\bibitem[Dvu3]{304}
A. Dvure\v censkij, {\it Perfect effect algebras and spectral resolutions of observables}, Found. Phys. {\bf 49} (2019), 607--628. DOI: 10.1007/s10701-019-00238-2


\bibitem[DvKu]{DvKu}
A. Dvure\v censkij, M. Kukov\'a,   {\it Observables on quantum structures}, Inf. Sci. {\bf 262} (2014), 215--222. DOI: 10.1016/j.ins.2013.09.014

\bibitem[DvLa]{DvLa}
A. Dvure\v censkij, D. Lachman, {\it Spectral resolutions and observables in $n$-perfect MV-algebras}, Soft Computing {\bf 24} (2020), 843--860. DOI: 10.1007/s00500-019-04543-w

\bibitem[DvLa1]{DvLa1}
A. Dvure\v censkij, D. Lachman, {\it Observables on lexicographic effect algebras}, Algebra Universalis {\bf 80} (2019), Art. 49.
DOI: 10.1007/s00012-019-0628-y

\bibitem[DvLa2]{DvLa2}
A. Dvure\v censkij, D. Lachman, {\it Two-dimensional observables and spectral resolutions}, Rep. Math. Phys. {\bf 85} (2020), 163--191.

\bibitem[DvLa3]{DvLa3}
A. Dvure\v censkij, D. Lachman, {\it Lifting, $n$-dimensional spectral resolutions, and $n$-dimensional observables}, Algebra Universalis  {\bf 34} (2020), Art. Num. 34. DOI: 10.1007/s00012-020-00664-8

\bibitem[DvLa4]{DvLa4}
A. Dvure\v censkij, D. Lachman, {\it $n$-dimensional observables on $k$-perfect MV-algebras and $k$-perfect effect algebras. I. Characteristic point},

%\bibitem[DvPu]{DvPu} A. Dvure\v censkij, S. Pulmannov\'a,   {\it ``New Trends in Quantum Structures"}, Kluwer Academic Publ., Dordrecht, Ister Science, Bratislava, 2000, 541 + xvi pp.

%\bibitem[FoBe]{FoBe} D.J. Foulis, M.K. Bennett, {\it  Effect algebras and unsharp quantum logics}, Found. Phys. {\bf 24} (1994), 1325--1346.

%\bibitem[Fuc]{Fuc} L. Fuchs, {\it ``Partially Ordered Algebraic Systems"}, Pergamon Press, Oxford-New York, 1963.

\bibitem[Go]{Goo}
K.R. Goodearl, {\it ``Partially Ordered Abelian Groups with Interpolation"},
Math. Surveys and Monographs No. 20, Amer. Math. Soc.,
Providence, Rhode Island, 1986.

\bibitem[Hal]{Hal} 
P.R. Halmos, {\it ``Measure Theory"}, Springer-Verlag, Berlin, 1974.

\bibitem[Kal]{Kal}
O. Kallenberg, {\it ``Foundations of Modern Probability"}, Springer-Verlag, New York, Berlin, Heidelberg, 1997.

%\bibitem[Mun]{Mun} D. Mundici, {\it   Interpretation of AF $C^*$-algebras in \L ukasiewicz sentential calculus}, J. Funct. Anal. {\bf 65} (1986),  15--63.

%\bibitem[Mun1]{Mun1}
%D. Mundici, {\it Tensor products and the Loomis--Sikorski theorem for MV-algebras}, Advan. Appl. Math. {\bf 22} (1999), 227--248.

%\bibitem[Rav]{Rav} K. Ravindran, {\it  On a structure theory of effect algebras}, PhD thesis, Kansas State Univ., Manhattan, Kansas, 1996.

\end{thebibliography}
\end{document}